\documentclass[DIV=10]{scrartcl}% Math and Physical Sciences Reference Style
\usepackage{subcaption}
\usepackage{comment}
\usepackage{tabularx}
\usepackage{makecell}
\usepackage{amsfonts}
\usepackage{amsfonts, amsmath, amssymb}
\usepackage{amsthm}
\usepackage{csquotes}
\usepackage{enumitem}
\usepackage{mathtools}
\usepackage[%
    hyperfootnotes=false,
    colorlinks = true,
    final=true,
    plainpages=false,
    pdfstartview=FitV,
    pdftoolbar=true,
    pdfmenubar=true,
    pdfencoding=auto, % Allows symbols in toc and content oveview
    psdextra, % same as above
    bookmarksopen=true,
    bookmarksnumbered=true,
    breaklinks=true,
    linktocpage=true,
    citecolor=blue,
    linkcolor=blue]%
    {hyperref}
\usepackage[nameinlink]{cleveref}
\Crefname{figure}{}{}

\usepackage[style=numeric-comp,%
			defernumbers=true,
			useprefix=true,%
			giveninits=true,%
			hyperref=true,%
			uniquename=init,%
			sorting = none,
			sortcites=false,% sort citations when multiple entries are passed to one cite command
			doi=true,%
			isbn=false,%
			url=false,%
			backend=biber%
	]{biblatex}
\usepackage{graphicx}
\usepackage{titling}
\usepackage{authblk}

\newtheorem{theorem}{Theorem}%  meant for continuous numbers
\newtheorem{lemma}[theorem]{Lemma}%  meant for continuous numbers
\newtheorem{corollary}[theorem]{Corollary}

\newtheorem{example}{Example}%

\newtheorem{definition}{Definition}%

\newcommand{\R}{{\mathbb R}}
\newcommand{\C}{{\mathbb C}}

\newcommand{\N}{{\mathbb N}}
\newcommand{\T}{{\mathbb T}}

\newcommand{\Z}{{\mathbb Z}}

\newcommand{\op}{A}
\newcommand{\solu}{u}
\newcommand{\data}{f}

\newcommand{\opDiscrete}{\ensuremath{\tilde{A}}}
\newcommand{\solutionDiscrete}{\ensuremath{\tilde{u}}}

\newcommand{\noise}{n}
\newcommand{\dataNoisyDiscrete}{\ensuremath{\tilde{f}^\delta}}

\newcommand{\net}{\mathcal{G}}
\newcommand{\params}{\theta}
\DeclareMathOperator{\sgn}{sgn}

\addtokomafont{disposition}{\rmfamily}
\makeatletter
\let\blx@rerun@biber\relax
\makeatother

\title{A neural operator view on U-Nets for inverse
imaging problems}

\author[1]{Alexander Auras}
\affil[1]{Computer Vision Group, University of Siegen, H\"olderlinstraße 3, 57076 Siegen, Germany,}
\affil[ ]{\{alexander.auras, michael.moeller, michael.schopf\}@uni-siegen.de}
\author[2,3]{Martin Burger}
\affil[2]{Helmholtz Imaging, Deutsches Elektronen-Synchrotron DESY, Notkestr. 85, Hamburg, 22607,
Germany,}
\affil[ ]{\{martin.burger, samira.kabri\}@desy.de}
\affil[3]{Fachbereich Mathematik, Universit\"at Hamburg, Bundesstrasse 55, Hamburg, 20146, Germany}
\author[2]{Samira Kabri}
\author[1]{Michael Moeller}
\author[1]{Michael Schopf-Kuester}
\date{}

\begin{document}

\maketitle

\abstract{Deep neural networks have shown great empirical success in the solution of a wide variety of ill-posed inverse problems in imaging. Yet, very few works have studied their behavior in the limit that turns the discretized ill-conditioned problems into truly ill-posed ones, i.e., for an increasing resolution of the discretization. In this work, we review common approaches to \textit{neural operator} learning in architectures that resemble a U-Net, one of the most common classical architectures for inverse imaging problems. We discuss advantages and drawbacks of the respective approaches, consider a 1D toy example for improved interpretability, and present extensive numerical experiments on how different types of neural operator U-Nets can improve a first (crude) limited angle CT-reconstruction. In particular, we study how well networks trained for a certain resolution of the discretization generalize to other resolutions. Our finding is that while U-shaped neural operator architectures are by design resolution-invariant, the classical U-Net architecture seems to be more robust with respect to resolution changes than expected.}

\section{Introduction}
\label{aa_sec:intro}
In this work, we study the solution of ill-posed linear inverse problems in imaging with the help of deep learning. We consider an underlying (infinite-dimensional) inverse problem of recovering $\solu$ from $\data$ given by 
\begin{align}
\label{aa_eq:inverseProbInfiniteDim}
    \data = \op(\solu) 
\end{align}
for a compact linear operator $\op$ with infinite-dimensional range. Under these conditions, the pseudo-inverse of $\op$ becomes discontinuous (cf. \cite{engl1996regularization_AA}), making the problem \eqref{aa_eq:inverseProbInfiniteDim} ill-posed. \vspace*{2mm}\\
\textbf{Ill-posed inverse problems and regularization} \\
In any practical setting, one can only measure finitely many (discrete) data points, commonly interpreted as the integration of the continuous function $\op(\solu)$ over areas of individual sensors plus some measurement noise, e.g., 
\begin{align}
\label{aa_eq:inverseProbSemifiniteDim}
    \dataNoisyDiscrete_{i} = \int_{S_{i}} \op(\solu)(x) ~dx + \noise,
\end{align}
for a sensor area $S_{i}$ and measurement noise $\noise$, giving rise to discrete measurements $\dataNoisyDiscrete \in \mathbb{R}^{N_\data}$ for a certain resolution $N_\data$.\footnote{We remark that in imaging, the measured data often has the interpretation of a two-dimensional matrix $\mathbb{R}^{N_1 \times N_2}$, which, however, is isomorphic to $\mathbb{R}^{N_1N_2}$ such that we consider the vectorized version only.} Naturally, one chooses to reconstruct a discretization $\solutionDiscrete \in \mathbb{R}^{N_\solu}$, where a reasonable choice of $N_\solu$ depends on the number of measurements $N_\data$. Model-based reconstructions therefore determine a discrete approximation $\opDiscrete \in \mathbb{R}^{N_\data \times N_\solu}$ of the operator $\op$, and solve a regularized version of 
\begin{align}
\label{aa_eq:discreteInvProb}
\dataNoisyDiscrete \approx \opDiscrete \solutionDiscrete. 
\end{align}
While \eqref{aa_eq:discreteInvProb} is finite dimensional, such that the solution always depends on the data continuously, the ill-posedness of \eqref{aa_eq:inverseProbInfiniteDim} typically makes \eqref{aa_eq:discreteInvProb} ill-conditioned, with the operator norm $\|\opDiscrete\|$ increases with increasing $N_\data$, $N_\solu$.  Thus, model-based approaches have focused on developing suitable regularization strategies as well as regularization parameter choice rules for the resulting approaches to be provably convergent, even in the infinite-dimensional case \eqref{aa_eq:inverseProbInfiniteDim}.  

The difficulty of modeling suitable regularizers (or priors) has led to the rise of deep learning approaches for inverse problems. In the simplest case, such approaches use a simple (crude) estimate of $\solutionDiscrete$, e.g. $\solutionDiscrete^0 = \opDiscrete^T \dataNoisyDiscrete$, as an input to a neural network $\net$ and \textit{learn} to reconstruct a better approximation $\net(\solutionDiscrete^0;\params)$ from the initial one by training the parameters $\params$ on a large amount of (typically simulated) training examples for which the ground truth is known.
Another method is to train a denoiser as a so-called plug-and-play prior, and incorporate it into an iterative reconstruction approach (cf. \cite{hurault2023gradient_AA,venkatakrishnan2013pnp_AA}). Recently, generating samples from the posterior distribution with the help of score-based diffusion (cf. \cite{feng2023score_AA,hagemann2023multilevel_AA,Lim25ScoreBased_AA,song2019generative_AA,Welker2025Ptycho_AA,Xu24Provably_AA}) and Bayesian flow matching (cf. \cite{steidl2025flowmatching_AA}) has gained large interest. Many of these approaches rely on U-Nets: Originally designed for image segmentation in \cite{ronneberger2015unet_AA, shelhamer2017fully_AA}, these U-shaped neural networks appear in end-to-end approaches as proposed in \cite{Feng20Endtoend_AA}, in the DRUNET architecture \cite{Zhang2022dpirdrunet_AA}, a popular choice for plug-and-play priors, and in the NCSN architecture \cite{song2019generative_AA} used to approximate the score function in score-based diffusion, to name just a few.
While such approaches have been extremely successful in various applications (cf. \cite{auras2024overview}), most practical works consider the discretization $N_\data$ and $N_\solu$ to be fixed, neglecting the ill-posed nature of the underlying continuous formulation. 
\vspace*{2mm}\\\textbf{Neural Operators} \\
The field of \textit{Neural Operator Learning} \cite{berner2025principled_AA, kovachki2023neural_AA} has focused on the approximation of operators independent of the discretization of the input signal. It primarily deals with the question of a consistent continuum interpretation of linear layers, in particular convolutional layers, to allow for varying discretizations of the input. 
The original formulation of neural operators proposed in \cite{kovachki2023neural_AA} views convolutional layers as approximations of integral operators. A famous representative of this interpretation is the Fourier neural operator (FNO) proposed in \cite{li2021fourier_AA}. Here, convolution kernels are parametrized by their Fourier coefficients, allowing for the effortless approximation of a continuous convolution using trigonometric interpolation of the kernels. FNOs and FNO-derived architectures have been used successfully, especially in the context of partial differential equations (PDEs) (cf. \cite{Bonneville2025uafno_AA, li2023fourier_AA, li2021fourier_AA, liu2025uffno_AA, rahman2023uno_AA, tranfactorized_AA, wen2022ufno_AA}). In this chapter, we pay special attention to the U-shaped neural operator (UNO) that is based on the FNO and proposed in \cite{rahman2023uno_AA}. 

The parametrization by Fourier coefficients makes it more complicated to restrict the spatial support of convolution kernels. A natural approach to combine spatial locality with the approximation of an integral operator is to interpolate local kernels while keeping the size of their support fixed. The work \cite{liuschiaffini2024localno_AA} integrates this idea into neural operators and uses the discrete-continuous (DISCO) convolution established in \cite{ocampo2023scalable_AA} to implement so-called local convolutions.

Extending the integral operator view of \cite{kovachki2023neural_AA} to differential operators, it was further pointed out in \cite{liuschiaffini2024localno_AA} that classical convolutional layers can also be used to approximate spatial derivatives. This seems especially interesting in the context of finite difference approximations of partial differential equations that play an important role in model-based image processing (cf. \cite{Aubert2006_AA, Schoenlieb2015_AA}) and have further already motivated the neural network architectures proposed in \cite{ruthotto2020deep_AA}.

A more restrictive version of neural operators is given by  representation equivalent neural operators (ReNOs) described in \cite{bartolucci2023representation_AA}. In particular, ReNOs are required to respect a function's band-limit, which is usually not fulfilled with the usual activation functions. Motivated by the concept of representation equivalence, \cite{raonic20cno_AA} propose the convolutional neural operator (CNO). Based on a classical U-Net architecture, the CNO is made representation equivalent by additional upsampling and downsampling operations. In particular, the input resolution of a CNO has to be fixed, requiring a resizing operation for inputs of different resolutions.

While neural operators have also been applied in imaging applications, e.g., classification \cite{Johnny2022fno_AA, Kabri2023FNO_AA}, super-resolution \cite{Wei2023superres_AA} and image generation \cite{hagemann2023multilevel_AA}, the study of U-shaped neural operators is, to the best of the authors' knowledge, limited to the approximation of solution operators for partial differential equations.  With this chapter, we want to contribute to closing this gap. We focus on architectural design choices and their interpretation in the infinite dimensional setting. Of course, for a full understanding, quantities like the approximation error resulting from the discretization and the generalization error to unseen data have to be considered, as well as how training with data of different resolutions influences the training dynamics. While we do not cover these aspects in detail, we refer to \cite{kovachki2021fnoapprox_AA,kovachki2023neural_AA} for approximation theory, to \cite{deHoop2023operatorlearning_AA, Reinhardt2024Operator_AA} for statistical learning theory for neural operators and operator learning in general and to \cite{koshizuka2024express_AA,rowbottom2025multi_AA} for recent works on training aspects of FNOs.
\vspace*{2mm}\\\textbf{Structure of this work} \\ The remainder of this work is organized as follows: In Section \ref{aa_sec:discrete} we recall the classical U-Net architecture. In Section \ref{aa_sec:integral} we explore ways to transform the classical U-Net into a U-shaped neural operator following the integral operator interpretation. More precisely, in Subsection \ref{aa_subsec:fno} we re-derive the spectral convolution which is at the heart of FNO architectures and propose a simplistic U-shaped neural operator architecture. In Subsection \ref{aa_subsec:interpolating} we investigate how local convolutional layers could make the proposed architecture more efficient. In Section \ref{aa_sec:differential} we discuss possibilities to integrate differential layers into neural operators for image processing. In Section \ref{aa_sec:numexp} we illustrate different architectures with a $1$-D example of sparse signal deblurring and provide large scale numerical experiments for the post-processing of limited angle CT data. We summarize our findings in Section \ref{aa_sec:conclusion}. All code for reproducing our results will be made available publicly.\footnote{The code to reproduce our results is available at \url{https://github.com/AlexanderAuras/neural-operator-view-on-unets}}

\section{Discrete U-Net}\label{aa_sec:discrete}
In this section, we summarize common architectures for classical U-Nets. In most architectures, all linear layers are convolutional layers and the nonlinear activation function is usually the Rectified Linear Unit (ReLU). The ``U''-shape that characterizes a classical U-Net is caused by down-sampling  in the first half (contracting path) and up-sampling operations in the second half (expanding path) of the architecture. To make up for the information loss, it is common practice to increase the number of channels after down-sampling and to decrease them after up-sampling. The transmission of high frequency information is ensured by so-called skip-connections, which directly feed the outputs of the contracting path into the corresponding levels of the expanding path. We refer to Figure \ref{aa_fig:UNetArchi} for a visualization of the general structure. In the following, we give more detailed descriptions of the components mentioned above. We use the notation \begin{equation*}
    [n] := \left\{-\left\lfloor \frac{n}{2} \right \rfloor, \hdots, 0, \hdots,  \left\lceil \frac{n}{2} \right \rceil -1 \right\}.
\end{equation*} for $n \in \N$ and the corresponding zero-centered indexing, i.e., $u \in \R^{n}$ can be written as $(u_i)_{i \in [n]}$.

\begin{figure}
    \centering
    \includegraphics[width = 0.8\textwidth]{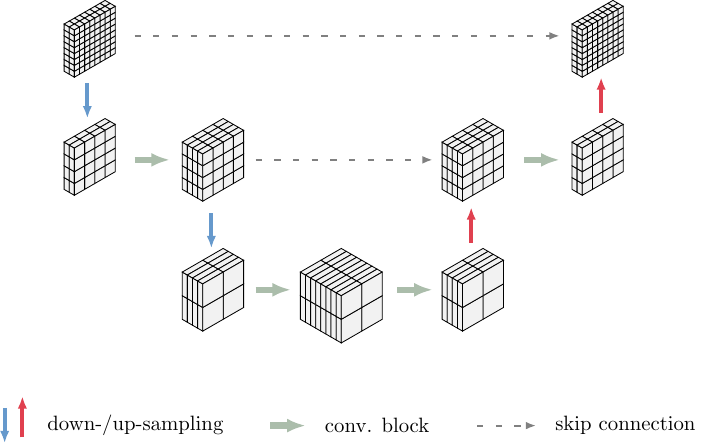}
    \caption{The general form of a U-Net.}
    \label{aa_fig:UNetArchi}
\end{figure}
\subsection{Convolutional blocks}\label{aa_sec:convblocks}
A convolutional block is a stack of convolutional layers and activation functions in alternating order.  For the sake of simplicity, we describe a convolutional block that only consists of one convolutional layer, followed by one application of the activation function.
The discrete convolution of an input $u \in \R^{n \times n}$ with a discrete convolution kernel $\kappa \in \R^{m \times m}$, $m \leq n$, can be formally defined as 
\begin{equation}\label{aa_eq:discconvn}
    (\kappa * u)_i := \sum_{j \in [m]^2} \kappa_{j} u_{i-j}, \qquad \text{for all } i \in [n]^2.
\end{equation}
This notion requires a definition of $u_i$ for $i \notin [n]^2$. In practice, filling up the values of $u$ for unknown indices is called padding and the so-called padding mode determines how the padding is performed. For completeness, we further note that it is also common to compute only $(\kappa * u)_i$ for $i$ for which there  don't appear any undefined indices in \eqref{aa_eq:discconvn}. In this work, we are interested in two different padding modes:
\begin{enumerate}
    \item Zero-padding: Unknown values are set to zero.
    \item Circular padding: Unknown values are determined by the periodic extension.
\end{enumerate}
While zero-padding is more popular than circular padding (images are usually not assumed to be periodic), circular padding allows for a change of basis to the Fourier domain, which will be useful in Section \ref{aa_sec:integral}. We further note that there is no need for the dimensions of $\kappa$ and $u$ to match, as the sum in \eqref{aa_eq:discconvn} is only taken over $[m]^2$, which corresponds to zero-padding of the kernel $\kappa$. This gives the key-insight how the convolutional layer behaves for inputs of different dimensions, or, from a function space perspective, of different resolution. We illustrate this in the following example.
\begin{example}\label{aa_ex:localizingkernel} Consider a function $u: \R^2 \rightarrow \R$ and its discretizations $u^n \in \R^{n\times n}$, $u^n := (u(i/n))_{i \in [n]^2}$. To get a well-defined definition of the discrete convolution, we can assume that the support of $u$ lies in $[-0.5,0.5]^2$ (zero-padding) or that $u$ is $1$-periodic (circular padding). For a fixed kernel $\kappa \in R^{m \times m}$, we get 
\begin{equation*}
    (\kappa*u^n)_i = \sum_{j \in [m]^2} \kappa_{j} \,u\left(\frac{i-j}{n}\right), \qquad \text{for all } i \in [n]^2,
\end{equation*}
meaning that the support of the kernel, or its receptive field, is given by $[0, m/n]^2$ and therefore shrinks as $n$ grows. In other words, the discrete convolution with a fixed kernel becomes more and more local the higher the resolution of the input is.
\end{example}
So far, we have omitted the so called channels, i.e., inputs $u \in \R^{c_{\text{in}}\times n \times n}$, that can be motivated by vector-valued functions. A convolutional layer parametrized by $\kappa \in \R^{c_{\text{out}}\times c_{\text{in}}\times m \times m}$ has $c_{\text{in}}$ input channels and $c_{\text{out}}$ output channels and is defined by
\begin{equation*}
    (\kappa * u)_{c} := \sum_{d \in [c_{\text{in}}]} \kappa_{c,d}*u_d, \qquad \text{for all } c \in [c_{\text{out}}].
\end{equation*}
Concatenating the convolutional layer with an activation function $\sigma: \R \times \R$, we obtain the simple convolutional block $C_\kappa: \R^{c_{\text{in}}\times n \times n} \rightarrow \R^{c_{\text{out}}\times n \times n}$,
\begin{equation}\label{aa_eq:cnnconvlayer}
    C_{\kappa}(u) := \sigma(\kappa * u),
\end{equation}
where the activation function is applied elementwise. In practice, the above form is often equipped with an additive bias $b \in \R^{c_{out}}$, which is added channel-wise to the output of the convolutional layer, leading to
\begin{equation*}
    C_{\kappa, b}(u) := \sigma(\kappa * u + b).
\end{equation*}
While the latter version is used in the numerical experiments, we use \eqref{aa_eq:cnnconvlayer} in the majority of our theoretical studies. However, all results can be extended to the biased operator, as $b \in \R^{c_{out}}$ can always be interpreted as a vector-valued function that is constant in each component.
We further note that in both cases, the only dimensions that are inherent to the parametrization by $\kappa$ and $b$ are $c_{\text{in}}$ and $c_{\text{out}}$. In particular, the image size $n$ is not determined by the size $m$ of the kernel.
\subsection{Downsampling, Upsampling and Skip Connections}
Different methods for decreasing the spatial resolution to transfer to a lower level of the U-Net in the encoding path have been proposed: The original paper \cite{ronneberger2015unet_AA} used max-pooling, i.e., took the largest value in every distinct $2\times 2$ patch. Alternatives are average pooling (=taking the mean instead of the maximum), strided convolutions (=learning weights for combining the entries in a patch), or other (non-linear) pooling layers. 

The reverse process of increasing the resolution in the decoder-path of the U-Net is typically done by direct upsampling (interpolation) or the use of transpose convolution, i.e., the adjoint operator to a convolution with stride. 

Skip connections in U-Nets are typically concatenating skip-connections, i.e., the operation of appending the channels of the output of a convolutional layer on the encoder side to the respective channels of the same spatial resolution on the decoder side. Alternatives are additive (residual) connections (for improved efficiency) or more sophisticated processing connections such as attention gates in \cite{erisen2024_AA}.

Typically, data-driven models based on U-Net architectures are optimized specifically for a single input resolution. The driving question of the remaining chapters will thus be ``How can we interpret a discrete U-Net architecture from the neural operator perspective?''. In detail, we consider infinite dimensional architectures that can be reduced to conventional U-Nets via discretization. The focus will be on the infinite dimensional interpretation of the convolution. As discussed in  \cite{liuschiaffini2024localno_AA}, the discrete convolution \eqref{aa_eq:discconvn} can result from the discretization of integral operators, but also differential operators. Therefore, there is no unique answer to the posed question and a suitable interpretation depends on the given application and in particular, on the specific ``role'' a convolutional layer fulfills within the U-Net. Our derivations aim to explain both interpretations of discrete convolutions as integral operators and as differential operators and why they could both be reasonable choices in the context of image processing.
\section{U-Nets with integral operators}
\label{aa_sec:integral}
In this section, we want to interpret the convolutional layers in a discrete U-Net as discretized integral operators. Based on the spectral convolution proposed in \cite{li2021fourier_AA} we derive a simple neural operator architecture that can be seen as a special case of the UFNO \cite{rahman2023uno_AA}, while still being very close to its discrete counter part. 

For the integral operator perspective, we interpret the discrete convolution \eqref{aa_eq:discconvn} to be an approximation of the convolution of two functions $\kappa: \Omega \rightarrow \R$ and $u: \Omega \rightarrow \R$,
\begin{equation}\label{aa_eq:contconv}
    (\kappa * u)(x) = \int_{\Omega} \kappa(y)\,u(x-y)\,dy,
\end{equation}
where we choose $\Omega = (-0.5,0.5)^2$. Again, one has to provide an extension of $u$ to make $(\kappa * u)$ well-defined on the entire domain $\Omega$. Assuming both functions to be smooth enough and $n \in \N$ large enough, using standard mid-point quadrature (cf. \cite[Chapter 9]{Quarteroni2007_AA}), we get
\begin{equation}
\label{aa_eq:discreteIntegralApproximation}
    (\kappa * u)(x_i) \approx \frac{1}{n^2} (\kappa^n * u^n)_i \qquad \text{for all }i \in [n]^2,
\end{equation}
where we use the notation $x_i := i/n$, $\kappa^n := (\kappa(x_i))_{i \in [n]^2}$ and $u^n = (u(x_i))_{i \in [n]^2}$. It follows directly from example \ref{aa_ex:localizingkernel} that the discrete convolution implemented by \eqref{aa_eq:discconvn} only allows for the above interpretation if $n$, i.e., the resolution of the inputs, is fixed. For two different resolutions, the same discrete kernel corresponds to the approximation of the convolution with two different kernel functions. The seminal work on neural operators \cite{kovachki2023neural_AA} proposes various options to parametrize convolutional layers such that they indeed approximate a convolution for varying input resolutions. In this chapter, we concentrate on the parametrization of kernels by their Fourier coefficients, as proposed for Fourier Neural Operators \cite{li2021fourier_AA}.
\subsection{Spectral architectures based on trigonometric interpolation}
\label{aa_subsec:fno} We first briefly recapitulate the basics of discrete Fourier analysis to motivate the implementation of the spectral convolution which is at the heart of Fourier neural operators. In a next step, we derive a U-shaped neural operator based on the spectral convolution. To keep the architecture as simple as possible, we omit the so-called linear transform that is typically used in the FNO-layer and only add residual connections.\footnote{In this context, the linear transform refers to a linear combination of the channels. Substituting it by residual connections is thus equivalent to fixing the linear transform to be the identity.} While in  \cite{rahman2023uno_AA} it is proposed to contract and expand the function domain to create a U-shape, we propose to first decrease and then increase the number of parameters used. For the sake of simpler notation, the results we provide here correspond to the general case that $u \in \C^{n\times n}$. Restricting $u$ to be real-valued requires some careful adaptations for even $n$. We refer the interested reader to \cite{Kabri2023FNO_AA}. All results we state in this section are well-known results from Fourier analysis (cf. \cite{Grafakos2014Fourier_AA}), but we restate them in our specific setting to improve readability.
\subsubsection{Basic definitions and results}
\label{aa_subsec:fourier_transformation}
We define the discrete Fourier transform of $u \in \C^{n\times n}$ as
\begin{equation}\label{aa_eq:dft}
    (\mathcal{F}u)_k := \frac{1}{n^2}\sum_{j \in [n]^2} u_j \,e^{-2\pi i\left\langle\frac{j}{n}, k\right\rangle} \qquad \text{for all } k \in [n]^2,
\end{equation}
where here and in the following, $i$ denotes the imaginary unit and $\langle \cdot, \cdot \rangle$ denotes the canonical inner product.
We restrict the support to $[n]^2$ due to the $1$-periodicity of $e^{-2\pi i\,x}$.  
The inverse discrete Fourier transform for $\hat{u} \in \C^{n \times n}$ is then given by 
\begin{equation}\label{aa_eq:idft}
    (\mathcal{F}^{-1}\hat{u})_j = \sum_{k \in [n]^2} \hat{u}_k\, e^{2\pi i\left\langle\frac{j}{n}, k\right\rangle} \qquad \text{for all } j \in [n]^2.
\end{equation}
Other variants of the discrete Fourier transform mainly differ from \eqref{aa_eq:dft} by the normalization factor $1/n^2$: Multiplying by $1/n$ in both, the ``forward'' and inverse discrete Fourier transform, yields a unitary operator, while only normalizing the inverse Fourier transform by $1/n^2$ is probably the most common choice. However, the normalization used in \eqref{aa_eq:dft} simplifies notation in our specific setting, where we are interested in Fourier coefficients of functions that are discretized with a varying number of sampling points. In this sense, the factor $1/n^2$ corresponds to approximating the Fourier coefficients of a periodic function defined on the two-dimensional torus $\T^2 = \R^2 \Big / \Z^2$. This interpretation becomes rigorous with the following lemma.
\begin{lemma}[cf. {\cite[Definition 3.1.4]{Grafakos2014Fourier_AA}}]\label{aa_lem:exactdft}
    For $\hat{u} \in \C^{m\times m}$ consider $u: \mathbb{T}^2 \rightarrow \C$,
    \begin{equation}\label{aa_eq:trigonoPoly}
        u(x) = \sum_{k \in [m]^2} \hat{u}_k\,  e^{2\pi i\,\langle x, k\rangle}.
    \end{equation}
Then it holds for any $n \geq m$ that
\begin{equation*}
    (\mathcal{F}u^n)_k = \begin{cases}
        \hat{u}_k & \text{if } k \in [m]^2,\\
        0 & \text{if } k \in [n]^2\setminus [m]^2.
    \end{cases}
\end{equation*} 
where $u^n = (u(j/n))_{j \in [n]^2}$ denotes the discretization of $u$ using $n^2$ equidistant sampling points.
\end{lemma}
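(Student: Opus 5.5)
The plan is to substitute the trigonometric polynomial \eqref{aa_eq:trigonoPoly}, sampled at the grid points $j/n$, into the definition \eqref{aa_eq:dft}. Then we interchange the two finite sums and reduce everything to a discrete orthogonality relation for the exponentials. Concretely, for $k \in [n]^2$ we write
\begin{equation*}
    (\mathcal{F}u^n)_k = \frac{1}{n^2}\sum_{j\in[n]^2}\sum_{l\in[m]^2}\hat{u}_l\, e^{2\pi i\langle \frac{j}{n}, l\rangle} e^{-2\pi i\langle\frac{j}{n},k\rangle} = \sum_{l\in[m]^2}\hat{u}_l\, \frac{1}{n^2}\sum_{j\in[n]^2} e^{2\pi i\langle \frac{j}{n}, l-k\rangle}.
\end{equation*}
The claim therefore follows once we show that the inner normalized sum equals $1$ if $l=k$ and $0$ otherwise, for all $l\in[m]^2\subseteq[n]^2$ and $k\in[n]^2$.

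The inner sum factorizes over the two coordinates, since $e^{2\pi i\langle j/n, l-k\rangle}=e^{2\pi i j_1(l_1-k_1)/n}\,e^{2\pi i j_2(l_2-k_2)/n}$. It therefore suffices to treat the one-dimensional sum $\frac1n\sum_{j\in[n]} \omega^{j}$ with $\omega = e^{2\pi i r/n}$ and $r = l_1-k_1$. Because $[n]$ is a set of $n$ consecutive integers, this is a geometric sum.
\begin{itemize}
    \item If $\omega=1$, the normalized sum equals $1$.
    \item If $\omega\neq 1$, the sum equals $\omega^{-\lfloor n/2\rfloor}(\omega^n-1)/(\omega-1)=0$, since $\omega^n=1$.
\end{itemize}
Moreover, $\omega = 1$ holds exactly when $r\equiv 0 \pmod n$.

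The step needing care is the index bookkeeping, which is where the assumption $n\ge m$ enters. We must check that $[m]\subseteq[n]$ when $n\ge m$; this holds because $\lfloor m/2\rfloor\le\lfloor n/2\rfloor$ and $\lceil m/2\rceil\le\lceil n/2\rceil$. Then both $l_1$ and $k_1$ lie in $[n]$, a set of $n$ consecutive integers, so $|l_1-k_1|\le n-1$, and $r\equiv 0 \pmod n$ forces $l_1=k_1$. Hence no aliasing occurs.

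With this in hand, the double sum collapses. For $k\in[m]^2$ it reduces to the single term $l=k$, giving $(\mathcal{F}u^n)_k=\hat{u}_k$. For $k\in[n]^2\setminus[m]^2$ no $l\in[m]^2$ equals $k$, so every term vanishes and $(\mathcal{F}u^n)_k=0$. This is exactly the claimed case distinction.
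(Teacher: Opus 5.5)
Your proof is correct and is essentially the paper's argument: the paper's one-line proof amounts to recognizing the samples $u^n$ as the inverse DFT \eqref{aa_eq:idft} of the zero-padded coefficient array and applying $\mathcal{F}$, which inverts \eqref{aa_eq:idft}; your geometric-sum orthogonality computation is exactly the proof of that inversion identity. You also make explicit the check $[m]\subseteq[n]$ (so that $|l_1-k_1|\le n-1$ rules out aliasing), which is where the hypothesis $n\ge m$ enters and which the paper leaves implicit.
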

\begin{proof}
The proof follows from inserting the inverse Fourier transform into \eqref{aa_eq:trigointerp}.
\end{proof}
Lemma \ref{aa_lem:exactdft} shows that for suitable functions, we can compute all non-trivial Fourier coefficients with the discrete Fourier transform of a discretization with equidistant samples, provided that the number of samples is large enough. The functions that fulfill \eqref{aa_eq:trigonoPoly} are called trigonometric polynomials and can be seen as the periodic version of band-limited functions. In this sense, Lemma \ref{aa_lem:exactdft} is the trigonometric polynomial counterpart of the sampling theorem for band-limited functions (cf. \cite[Theorem 6.6.9]{Grafakos2014Fourier_AA}). The trigonometric interpolation formula that is analogous to $\operatorname{sinc}$-interpolation for band-limited functions would take the form
\begin{equation*}
    u(x) = \sum_{j \in [n]^2} u(j/n)\, \psi_n(x-j/n), \qquad \text{with }\;\psi_n(x) := 1/n^2 \sum_{k \in [n]^2} e^{2\pi i\langle x, k \rangle },
\end{equation*}
which is equivalent to zero-padding the Fourier coefficients, i.e.,
\begin{equation}\label{aa_eq:trigointerp}
    u(x) = \sum_{k \in [n]^2} \hat{u}_k\,  e^{2\pi i\,\langle x, k \rangle} \qquad \text{with }  \hat{u}_k = (\mathcal{F}u^n)_k,\, k\in [n]^2.
\end{equation}
Having established the definition of the discrete Fourier transform, we restate the discrete convolution theorem, which motivates the implementation of the convolution of two functions using their Fourier coefficients.
\begin{theorem}[Discrete Convolution Theorem, cf. {\cite[Proposition 3.1.2, (9)]{Grafakos2014Fourier_AA}}]\label{aa_thm:discconvthm}
    Let $u, \kappa \in \C^{n\times n}$. Then it holds for the discrete convolution \eqref{aa_eq:discconvn} with circular padding of $u$ that
    \begin{equation*}
        \frac{1}{n^2}(\kappa * u) = \mathcal{F}^{-1}(\mathcal{F}\kappa \cdot \mathcal{F}u),
    \end{equation*}
    where we define the point-wise multiplication operator $\cdot$ for $\hat{u}, \hat{\kappa} \in \C^{n\times n}$ as \begin{equation*}
        \hat{\kappa} \cdot \hat{u} := (\hat{\kappa}_k\hat{u}_k)_{k \in [n]^2}.
    \end{equation*}
\end{theorem}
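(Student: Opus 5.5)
Since $\mathcal{F}$ is a bijection on $\C^{n\times n}$ with inverse \eqref{aa_eq:idft}, it suffices to prove the equivalent identity
\begin{equation*}
\frac{1}{n^2}\,\mathcal{F}(\kappa * u) = \mathcal{F}\kappa\cdot\mathcal{F}u .
\end{equation*}
Here the convolution is \eqref{aa_eq:discconvn} with $m=n$ and circular padding of $u$. Concretely, $u_{i-j}$ means $u$ evaluated at the representative of $i-j$ modulo $n$ in $[n]^2$. The key observation is that the characters $e^{-2\pi i\langle j/n,k\rangle}$ are $n$-periodic in $j$ for every $k\in\Z^2$. Hence sums of such expressions over $[n]^2$ are unchanged under any shift of the summation index modulo $n$.

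First we check that the map $\mathcal{F}\circ\mathcal{F}^{-1}$ is the identity, i.e. we confirm the normalisation. Inserting \eqref{aa_eq:idft} into \eqref{aa_eq:dft} gives the geometric sum
\begin{equation*}
\frac{1}{n^2}\sum_{j\in[n]^2} e^{2\pi i\langle j/n,\,k-k'\rangle},
\end{equation*}
which equals $\delta_{k,k'}$ for $k,k'\in[n]^2$. The sum factorises into two one-dimensional sums over a full period of $n$-th roots of unity. Each factor vanishes unless $k_\ell\equiv k'_\ell \pmod n$, and for indices in $[n]$ this forces $k_\ell=k'_\ell$. This is routine.

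For the main computation, fix $k\in[n]^2$ and expand:
\begin{equation*}
\frac{1}{n^2}(\mathcal{F}(\kappa*u))_k = \frac{1}{n^4}\sum_{i\in[n]^2}\sum_{j\in[n]^2}\kappa_j\,u_{i-j}\,e^{-2\pi i\langle i/n,k\rangle}.
\end{equation*}
Write $e^{-2\pi i\langle i/n,k\rangle}=e^{-2\pi i\langle j/n,k\rangle}\,e^{-2\pi i\langle (i-j)/n,k\rangle}$ and swap the finite sums. For fixed $j$, substitute $l\equiv i-j \pmod n$ with $l\in[n]^2$; as $i$ runs over $[n]^2$, $l$ runs over $[n]^2$ bijectively. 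The periodicity noted above means the reduction modulo $n$ does not change the exponential. The double sum therefore factorises as
\begin{equation*}
\Bigl(\frac{1}{n^2}\sum_{j}\kappa_j e^{-2\pi i\langle j/n,k\rangle}\Bigr)\Bigl(\frac{1}{n^2}\sum_{l}u_l e^{-2\pi i\langle l/n,k\rangle}\Bigr)=(\mathcal{F}\kappa)_k(\mathcal{F}u)_k.
\end{equation*}
Applying $\mathcal{F}^{-1}$ to both sides then gives the claim.

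The only delicate point is the index bookkeeping with the zero-centred set $[n]$ and circular padding. One must make sure that the substitution $l=i-j \bmod n$ is a bijection of $[n]^2$. One must also check that circular padding exactly matches evaluating $u$ at the periodic representative, which is precisely what makes the character identity exact rather than approximate. The normalisation factors $1/n^2$ need to be tracked as well: two appear from the two transforms and one from the statement, and they cancel as shown above.
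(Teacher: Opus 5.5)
Your proof is correct and is essentially the paper's argument, which likewise inserts the definitions of the convolution and of $\mathcal{F}$, $\mathcal{F}^{-1}$ and reorders the finite sums, using the $n$-periodicity of the characters to shift the summation index. The only difference is cosmetic. You verify $\frac{1}{n^2}\mathcal{F}(\kappa*u)=\mathcal{F}\kappa\cdot\mathcal{F}u$ and then invert, so orthogonality of the characters enters through your check of $\mathcal{F}\circ\mathcal{F}^{-1}=\mathrm{id}$, whereas expanding $\mathcal{F}^{-1}(\mathcal{F}\kappa\cdot\mathcal{F}u)$ directly uses it in the sum over $k$. Also, your closing tally of the $1/n^2$ factors is loosely worded (there are two on each side of your identity), though the displayed computation is right.
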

\begin{proof}
    The proof follows from inserting the definitions of the discrete convolution \eqref{aa_eq:discconvn} and the discrete Fourier transform and its inverse into the claimed formula and reordering the sums.
\end{proof}
\subsubsection{Spectral convolutional layers}
Up to a rescaling with the factor $1/n^2$ the convolution theorem allows to parametrize the discrete convolution \eqref{aa_eq:discconvn} restricted to inputs $u \in \C^{n \times n}$ by $\hat{\kappa} \in \C^{n\times n}$ that correspond to $\mathcal{F}\kappa$. 
This derivation however, requires a rule how to perform convolutions of inputs $u \in \C^{n \times n}$ with ``smaller'' kernels $\hat{\kappa} \in \C^{m\times m}$, $m\leq n$. We recall that in conventional implementations, this is done as in \eqref{aa_eq:discconvn}, i.e., by zero-padding of the ``spatial'' parameters $\kappa$. The authors of \cite{li2021fourier_AA} also propose to handle dimension mismatches by zero-padding, but this time in the domain of Fourier coefficients. This means to define
\begin{equation*}
    \hat{\kappa}_k = 0\qquad \text{for all } k\in \Z^2 \setminus [m]^2
\end{equation*}
for $\hat{\kappa} \in \C^{m\times m}$. The resulting operation is called the spectral convolutional layer. We denote it by $\operatorname{\hat{\ast}}$, i.e., for $u \in \C^{n \times n}$ and $\hat{\kappa} \in \C^{m\times m}$ we define
\begin{equation}
\label{aa_eq:spectralConvolution}
    \hat{\kappa} \operatorname{\hat{\ast}} u:= \mathcal{F}^{-1}\left(\left(\hat{\kappa}_k\,(\mathcal{F}u)_k\right)_{k \in [n]^2}\right).
\end{equation}
We note that the above definition also works for kernels that are larger than the input as the excess entries of $\hat{\kappa}$ will be ignored.

A direct advantage of the spectral convolutional layer is that it indeed approximates the continuous convolution \eqref{aa_eq:contconv} for a fixed kernel function, even for varying input resolutions. For trigonometric polynomials, it even follows from Lemma \ref{aa_lem:exactdft} and Theorem \ref{aa_thm:discconvthm} that the spectral convolutional layer is equivariant with respect to trigonometric interpolation. 
\begin{corollary}[cf. {\cite[Corollary 1]{Kabri2023FNO_AA}}]\label{aa_cor:exacttrigoconv}
    Consider trigonometric polynomials $\kappa: \T^2 \rightarrow \C$ and $u: \T^2\rightarrow \C$ that are determined by \eqref{aa_eq:trigonoPoly} with coefficients $\hat{\kappa}, \hat{u} \in \C^{m\times m}$. Then it holds for any $n \geq m$ that
    \begin{equation*}
        (\kappa * u)^n = \mathcal{F}^{-1}\left(\hat{\kappa}\operatorname{\hat{\ast}}u^n\right),
    \end{equation*}
    where $(\kappa * u)^n = ((\kappa * u)(j/n))_{j_\in[n]^2}$ and $u^n = (u(j/n))_{j_\in[n]^2}$ are discretizations of the functions $\kappa * u$ and $u$ using $n^2$ equidistant sample points.
\end{corollary}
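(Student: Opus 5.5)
We read the right-hand side of the statement as the spectral convolutional layer \eqref{aa_eq:spectralConvolution} itself, i.e.\ we aim to show $(\kappa * u)^n = \hat{\kappa}\operatorname{\hat{\ast}}u^n$. The outer $\mathcal{F}^{-1}$ in the statement appears to be redundant, since \eqref{aa_eq:spectralConvolution} already contains the inverse transform. The plan is to show that both sides are the inverse DFT of the same coefficient array. The only tools needed are Lemma \ref{aa_lem:exactdft}, applied twice, and the invertibility of $\mathcal{F}$ on $\C^{n\times n}$.

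\textbf{Step 1: convolution of trigonometric polynomials.} Interpret \eqref{aa_eq:contconv} on $\T^2$: $u$ is extended periodically, and $\Omega=(-0.5,0.5)^2$ is a fundamental domain. Inserting \eqref{aa_eq:trigonoPoly} for both $\kappa$ and $u$ gives
\begin{equation*}
(\kappa*u)(x)=\sum_{k,l\in[m]^2}\hat{\kappa}_k\hat{u}_l\, e^{2\pi i\langle x,l\rangle}\int_\Omega e^{2\pi i\langle y,k-l\rangle}\,dy .
\end{equation*}
By orthogonality of the exponentials on $\T^2$, the integral equals $1$ if $k=l$ and $0$ otherwise. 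Hence $\kappa*u$ is again a trigonometric polynomial of the form \eqref{aa_eq:trigonoPoly}, with coefficient array $\hat{\kappa}\cdot\hat{u}\in\C^{m\times m}$.

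\textbf{Step 2: inclusion of index sets.} Check that $[m]^2\subseteq[n]^2$ for $n\ge m$. This holds because $\lfloor m/2\rfloor\le\lfloor n/2\rfloor$ and $\lceil m/2\rceil\le\lceil n/2\rceil$.

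\textbf{Step 3: compute the right-hand side.} Lemma \ref{aa_lem:exactdft} gives that $\mathcal{F}u^n$ equals $\hat{u}$ on $[m]^2$ and vanishes on $[n]^2\setminus[m]^2$. In the spectral layer, $\hat{\kappa}$ is zero-padded outside $[m]^2$. The pointwise product $(\hat{\kappa}_k(\mathcal{F}u^n)_k)_{k\in[n]^2}$ is therefore the zero-padding of $\hat{\kappa}\cdot\hat{u}$ to $[n]^2$, and $\hat{\kappa}\operatorname{\hat{\ast}}u^n$ is the inverse DFT of this array.

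\textbf{Step 4: compute the left-hand side and conclude.} Apply Lemma \ref{aa_lem:exactdft} to the trigonometric polynomial $\kappa*u$ from Step 1. This shows $\mathcal{F}(\kappa*u)^n$ is the same zero-padded array. Since $\mathcal{F}^{-1}$ in \eqref{aa_eq:idft} is the inverse of \eqref{aa_eq:dft} on $\C^{n\times n}$, applying $\mathcal{F}^{-1}$ yields $(\kappa*u)^n=\hat{\kappa}\operatorname{\hat{\ast}}u^n$.

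The main obstacle is not computational; it is making the conventions consistent. The continuous convolution must be read on the torus with the normalization $\int_\Omega dy=1$, which matches the $1/n^2$ factor in \eqref{aa_eq:dft}. The zero-padding in frequency must be aligned with the centered index sets, as in Step 2. One must also fix the reading of the redundant $\mathcal{F}^{-1}$. If the statement is taken literally with the extra $\mathcal{F}^{-1}$, the argument above instead shows $\mathcal{F}(\kappa*u)^n=\mathcal{F}(\hat{\kappa}\operatorname{\hat{\ast}}u^n)$, which is an equivalent formulation.
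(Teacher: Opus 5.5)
Your proof is correct for the statement as evidently intended, $(\kappa*u)^n=\hat{\kappa}\operatorname{\hat{\ast}}u^n$. It takes a somewhat different route from the one the paper points to.

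The paper gives no written proof. It only says the corollary follows from Lemma~\ref{aa_lem:exactdft} and the discrete convolution theorem, Theorem~\ref{aa_thm:discconvthm}. Read that way, one samples the kernel as well:
\begin{itemize}
\item Lemma~\ref{aa_lem:exactdft} identifies $\mathcal{F}\kappa^n$ and $\mathcal{F}u^n$ with the zero-paddings of $\hat{\kappa}$ and $\hat{u}$.
\item Theorem~\ref{aa_thm:discconvthm} then gives $\hat{\kappa}\operatorname{\hat{\ast}}u^n=\frac{1}{n^2}(\kappa^n*u^n)$ with circular padding.
\item It remains to show that the quadrature \eqref{aa_eq:discreteIntegralApproximation} is exact, i.e.\ $(\kappa*u)^n=\frac{1}{n^2}(\kappa^n*u^n)$. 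This needs a separate aliasing argument that neither cited result supplies. The integrand $y\mapsto\kappa(y)\,u(j/n-y)$ only contains frequencies $k-l$ with $k,l\in[m]^2$, so $|k_c-l_c|\le m-1<n$ and no nonzero frequency aliases to $0$.
\end{itemize}

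You bypass the discrete convolution and the sampled kernel entirely. Instead you use the continuous convolution theorem on $\T^2$ (your Step~1), which makes $\kappa*u$ itself a trigonometric polynomial with coefficients $\hat{\kappa}\cdot\hat{u}$. Your route is shorter and self-contained. The paper's route additionally shows that, for trigonometric polynomials, the spectral layer coincides exactly with the scaled circular convolution with $\kappa^n$. That ties the corollary to \eqref{aa_eq:discreteIntegralApproximation} and Corollary~\ref{aa_cor:conversion}.

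Your Step~4 can also be shortened. Evaluating the expansion from Step~1 at $x=j/n$ is literally \eqref{aa_eq:idft} applied to the zero-padded array $\hat{\kappa}\cdot\hat{u}$. So neither the second use of Lemma~\ref{aa_lem:exactdft} nor the invertibility of $\mathcal{F}$ is needed.

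One correction to your final remark: the literal statement is not an equivalent formulation; it is false. Since $\mathcal{F}$ is a bijection, the literal statement is equivalent to $\mathcal{F}(\kappa*u)^n=\hat{\kappa}\operatorname{\hat{\ast}}u^n$. It is not equivalent to $\mathcal{F}(\kappa*u)^n=\mathcal{F}(\hat{\kappa}\operatorname{\hat{\ast}}u^n)$. For a counterexample, take $m=1$ and $\hat{\kappa}=\hat{u}=(1)$. Then both $(\kappa*u)^n$ and $\hat{\kappa}\operatorname{\hat{\ast}}u^n$ are the all-ones array. But $\mathcal{F}^{-1}$ of the all-ones array equals $n^2$ at $j=0$ and $0$ elsewhere, so the literal identity fails for every $n\ge 2$. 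The outer $\mathcal{F}^{-1}$ is therefore a typo, not a harmless redundancy. You should state explicitly that you are proving the corrected statement.
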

The generalization to multiple input and output channels is done exactly as for the discrete convolution presented in Section \ref{aa_sec:convblocks}. For an activation function $\sigma: \R \rightarrow \R$, the spectral convolutional block parametrized by $\hat{\kappa} \in \C^{c_{\text{out}} \times c_{\text{in}}\times m \times m}$ is then defined for $u \in \C^{n \times n}$ by
\begin{equation}\label{aa_eq:fnoconvlayer}
    S_{\hat{\kappa}}(u) = \sigma\left(\hat{\kappa} \operatorname{\hat{\ast}} u\right).
\end{equation}
At this point we would like to stress that although we can show interpolation equivariance for the spectral convolutional layer, this result does in general not transfer to the spectral convolutional block.
For example, it is pointed out in \cite{bartolucci2023representation_AA,Fanaskov2023specneurop_AA} that applying the popular $\operatorname{ReLU}$ activation function,
    $\operatorname{ReLU}(t) = \max\{0,t\}$,
pointwise in the sense of Nemytskii-operators (cf. \cite[Chapter 4.3]{Troeltzsch2010_AA}) 
does not keep the band-limit of a function, or, to relate it to our derivations, does not map trigonometric polynomials to trigonometric polynomials. This means that in general, even if we use two fine enough but different discretizations of a trigonometric polynomial, we can not expect the outputs of the spectral convolutional layer to be related by trigonometric interpolation anymore. This issue is addressed by respresentation equivalent neural operators (ReNOs) proposed in \cite{bartolucci2023representation_AA}. Building on this concept, the work \cite{raonic20cno_AA} proposes a modified activation function that first upsamples the discrete data to a high resolution before applying the $\operatorname{ReLU}$. 

\subsubsection{Derivation of a spectral U-Net architecture}
\label{aa_subsec:spectralUNet}
Using the spectral convolutional layer, we now derive a spectral U-Net architecture. 
To do so, we first recall that in a classical U-Net architecture the image that is passed through the U-Net is downsampled in the contraction path and then upsampled in the expansion path. We now want to inspect this behavior from the perspective of spectral convolutional layers.

\begin{corollary}[cf. {\cite[Lemma 3]{Kabri2023FNO_AA}}]\label{aa_cor:conversion}
    Let $m, n \in \N$, $m \leq n$ be fixed.
    For any $\kappa \in \C^{m \times m}$ and $u \in \C^{n \times n}$ it holds that
    \begin{equation*}
        \kappa * u = n^2\,(\mathcal{F}{\kappa}^n)  \operatorname{\hat{\ast}} u,
    \end{equation*}
    with $\kappa^n_j := \kappa_j$ if $j \in [m]^2$ and $\kappa^n_j := 0$ if $j \in [n]^2\setminus [m]^2$.
    Vice versa, for any $\hat{\kappa}\in \C^{m \times m}$ and $u \in \C^{n \times n}$ it holds that
    \begin{equation*}
        \hat{\kappa} \operatorname{\hat{\ast}} u = \frac{1}{n^2}\,(\mathcal{F}^{-1}{\hat{\kappa}}^n) * u,
    \end{equation*}
     with $ \hat{\kappa}_k^n:= \hat{\kappa}_k$ if $k \in [m]^2$ and  $\hat{\kappa}_k^n:= \hat{\kappa}_k$ if $k \in [n]^2\setminus [m]^2$.
\end{corollary}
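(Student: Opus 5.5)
The plan is to reduce both identities to the Discrete Convolution Theorem~\ref{aa_thm:discconvthm} by passing between kernels of size $m$ and zero-padded kernels of size $n$. Throughout, I read the discrete convolution \eqref{aa_eq:discconvn} with circular padding of $u$, as required by Theorem~\ref{aa_thm:discconvthm}. For the second identity I read the definition of $\hat{\kappa}^n$ as zero-padding, i.e.\ $\hat{\kappa}^n_k := 0$ for $k \in [n]^2\setminus[m]^2$. The statement as printed repeats $\hat{\kappa}_k$ there, which I take to be a typo, since $\hat{\kappa}_k$ is undefined outside $[m]^2$.

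\textbf{Preliminary observation.} Since $m \leq n$, the centered index sets satisfy $[m]^2 \subseteq [n]^2$. I would check this directly from $\lfloor m/2\rfloor \le \lfloor n/2\rfloor$ and $\lceil m/2\rceil \le \lceil n/2\rceil$. Hence the zero-padded kernel $\kappa^n \in \C^{n\times n}$ is well defined. Moreover, the sum in \eqref{aa_eq:discconvn} over $j\in[m]^2$ equals the sum over $j\in[n]^2$ with $\kappa$ replaced by $\kappa^n$, because the extra terms vanish. So $\kappa * u = \kappa^n * u$, where both sides use circular padding of $u$.

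\textbf{First identity.} Apply Theorem~\ref{aa_thm:discconvthm} to $\kappa^n, u \in \C^{n\times n}$:
\[
\tfrac{1}{n^2}(\kappa^n * u) = \mathcal{F}^{-1}\bigl(\mathcal{F}\kappa^n \cdot \mathcal{F}u\bigr).
\]
The kernel $\mathcal{F}\kappa^n$ already has size $n\times n$, so the spectral convolution \eqref{aa_eq:spectralConvolution} involves no truncation or padding. The right-hand side is therefore exactly $(\mathcal{F}\kappa^n)\operatorname{\hat{\ast}} u$. Multiplying by $n^2$ and using $\kappa*u=\kappa^n*u$ gives the claim.

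\textbf{Second identity.} First note that, by the zero-padding convention for spectral kernels, $\hat{\kappa}\operatorname{\hat{\ast}} u = \hat{\kappa}^n \operatorname{\hat{\ast}} u$: the entries $\hat\kappa_k$ with $k\in[n]^2\setminus[m]^2$ are defined to be zero in \eqref{aa_eq:spectralConvolution}. Next set $\tilde\kappa := \mathcal{F}^{-1}\hat{\kappa}^n \in \C^{n\times n}$. Since $\mathcal{F}$ and $\mathcal{F}^{-1}$ in \eqref{aa_eq:dft} and \eqref{aa_eq:idft} are mutually inverse on $\C^{n\times n}$, we have $\mathcal{F}\tilde\kappa = \hat\kappa^n$. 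Theorem~\ref{aa_thm:discconvthm} then yields
\[
\tfrac{1}{n^2}\,\tilde\kappa * u = \mathcal{F}^{-1}\bigl(\hat\kappa^n\cdot \mathcal{F}u\bigr) = \hat\kappa\operatorname{\hat{\ast}} u.
\]

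\textbf{Main obstacle.} The only real difficulty is bookkeeping. One must verify the inclusion $[m]^2\subseteq[n]^2$ for the centered index sets, and that the circular padding of $u$ in \eqref{aa_eq:discconvn} matches the periodic indexing modulo $n$ implicit in the DFT. That inverse relation itself follows from orthogonality of the characters $e^{2\pi i\langle j/n,k\rangle}$ over $[n]^2$.
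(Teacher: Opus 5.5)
Your proof is correct and matches the paper's intended argument. The paper gives no separate proof and treats the statement as an immediate consequence of the Discrete Convolution Theorem~\ref{aa_thm:discconvthm} after zero-padding the kernel to size $n$, with circular padding of $u$ as you rightly insist; that is exactly what you do in both directions. One aside: the printed definition of $\hat{\kappa}^n$ is consistent rather than a typo, because the paper's convention $\hat{\kappa}_k = 0$ for $k \in \Z^2\setminus[m]^2$ already makes it the zero-padding you assumed.
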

\begin{figure}
    \centering
        \includegraphics[height=0.3\textwidth]{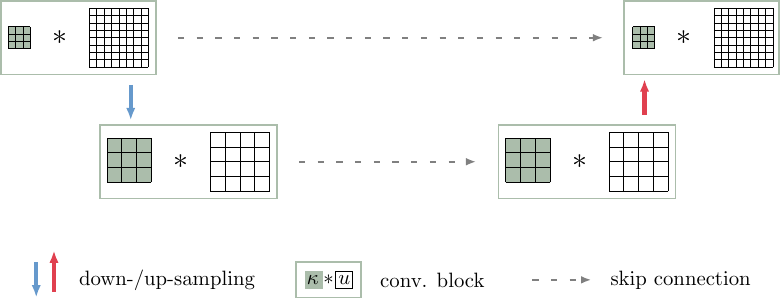}
        \caption{A more detailed visualization of the convolution in classical U-Net architectures. Typically, the size of the input changes by down- and upsampling, while the size of the convolutional kernels is fixed.}
        \label{fig:classicalunetdetail}
\end{figure}
Up to a scaling, the parameter transform from spatial parameters to spectral parameters is performed by first zero-padding the spatial kernel to the size of the input, followed by the application of the discrete Fourier transform, which is essentially trigonometric interpolation. The resulting set of spectral parameters has then the same size as the input. In other words: The kernel is resized to match the size of the input by trigonometric interpolation \eqref{aa_eq:trigointerp}. We now motivate our spectral U-Net architecture as follows:
\begin{enumerate}
    \item We start with a classical U-Net and assume that it is tied to input data of a fixed size.
    \item Using Corollary \ref{aa_cor:conversion}, we transform each convolutional block of the U-Net into a spectral convolutional block.
\end{enumerate}
\begin{figure}[!t]
        \centering
        \includegraphics[height=0.3\textwidth]{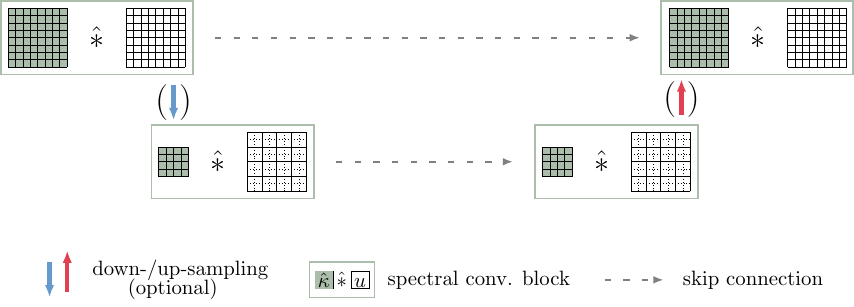}
        \caption{A more detailed visualization of the spectral convolution in the considered spectral U-Net architecture. While down- and upsampling is optional, the size of the kernels shrinks in the contracting path and grows in the expanding path. The overall structure is analogue to classical U-Net architectures.}
        \label{fig:spectralunetdetail}
\end{figure}
Passing through a classical U-Net, the size of the input first shrinks and then grows. This corresponds to a decreasing and then increasing number of spectral parameters. This again means that the convolutional kernels get coarser in the contraction path and then finer in the expansion path (cf. Figure \ref{fig:classicalunetdetail}). Since the spectral convolution is not tied to a fixed input size, we omit the resizing operation. The resulting architecture does not have a U-shape in the original sense anymore, but it has a U-shape with respect to the number of its parameters (cf. Figure \ref{fig:spectralunetdetail}). Since the number of required parameters is significantly larger than usually used in the classical case, we refrain from increasing the number of channels in the contraction path.
A drawback of spectral convolutional layers is that they cannot approximate the identity very well, since for any $u \in \C^{n\times n}$ it holds that 
\begin{equation*}
    u = ((1)_{k \in [n]^2}\operatorname{\hat{\ast}} u),
\end{equation*}
which cannot be obtained by zero-padding any fixed set of parameters $\hat{\kappa} \in \C^{m\times m}$ with $m < n$. For this reason, we supply our architecture with residual connections in each layer.
We summarize our spectral architecture in the following definition.
\begin{definition}[Spectral U-Net]\label{aa_def:spectralunet}
   Given a number of input and output channels $c_{\text{in}} = c_{\text{out}}$, a number of hidden channels $c$ and a depth $d$. Further let $0 < m_0 \leq m_1 \leq \hdots \leq m_{d-1} \leq m_d \leq m_{\text{in}}$ be the kernel sizes.
    The blocks $B_{\text{in}},B_{-d},\dots,B_0,\dots,B_{d},B_{\text{out}}$ of the spectral U-Net consists of the following components:
    \begin{itemize}
        \item The \textbf{first block} $B_{\text{in}}$ is parametrized by $\hat{\kappa}_{\text{in}} \in \C^{c \times c_{\text{in}} \times m_{\text{in}} \times m_{\text{in}}}$. For an input $u_{\text{in}} \in \C^{c_{\text{in}} \times n \times n}$ it is defined by $B_{\text{in}}: u_{\text{in}} \mapsto u_{-d}$,
        \begin{equation*}
        u_{-d} := S_{\hat{\kappa}_{\text{in}}}(u_{\text{in}}). 
    \end{equation*}
    \item \textbf{Contracting path:} The block $B_\ell$ with $\ell = -d, \hdots, -1$ is parametrized by $\hat{\kappa}_{\ell} \in \C^{c \times c \times m_{-\ell} \times m_{-\ell}}$. For an input $u_{\ell - 1} \in \C^{c \times n \times n}$ it is defined by $B_\ell: u_{\ell - 1} \mapsto u_{\ell}$,
     \begin{equation*}
        u_{\ell} := S_{\hat{\kappa}_{\ell}}(u_{\ell -1}) + u_{\ell -1}.
    \end{equation*}
    \item The \textbf{central block} $B_0$ is parametrized by $\hat{\kappa}_0 \in  \C^{c \times c \times m_{0} \times m_{0}}$. For an input $u_{-1} \in \C^{c \times n \times n}$ it is defined by $B_0: u_{-1} \mapsto u_0$,
     \begin{equation*}
        u_0 := S_{\hat{\kappa}_0}(u_{-1}) + u_{-1}.
    \end{equation*}
    \item \textbf{Expanding path:} The block $B_\ell$ with $\ell = 1, \hdots, d$ is parametrized by $\hat{\kappa}_{\ell} \in \C^{c \times c \times m_{\ell} \times m_{\ell}}$. For inputs $u_{-\ell},u_{\ell - 1} \in \C^{c \times n \times n}$ it is defined by $B_{\ell}:(u_{-\ell},u_{\ell - 1}) \mapsto u_{\ell}$,
    \begin{equation*}
        u_{\ell} := S_{\hat{\kappa}_{\ell}}(u_{-\ell} + u_{\ell-1}) + u_{\ell - 1}.
    \end{equation*}
    \item The \textbf{last block} $B_{\text{out}}$ is parametrized by $\kappa_{out} \in \C^{c_{\text{out}}\times c \times 1 \times 1}$. For an input $u_{d} \in \C^{c \times n \times n}$ it is defined by $B_{\text{out}}: u_{d} \mapsto u_{\text{out}}$,
    \begin{equation*}
        u_{\text{out}} = \kappa_{out} * u_{d}.
    \end{equation*}
    \end{itemize}
    Stacking all components together, the spectral U-Net is given by
    \begin{equation*}
        B_{\text{out}} \circ U^+_{d} \circ B_{\text{in}},
    \end{equation*}
    where $U^+_{d}$ is defined recursively for $u \in \C^{c\times n\times n}$ by
    \begin{eqnarray*}
        U^+_\ell(u) & = &\begin{cases}
            B_\ell(U^-_\ell(u), U^+_{\ell -1}(u)) & \text{if } \ell \in \{1,\hdots, d,\}\\
            U^-_0(u) & \text{if } \ell = 0,
        \end{cases}\\
        \\
        U^-_\ell(u) &= & (B_{-\ell} \circ B_{-\ell + 1} \circ \hdots \circ B_{-d})(u), \; \text{for }\ell \in {0,\hdots,d}.
    \end{eqnarray*}
\end{definition}
As mentioned, the above architecture does not include resizing operations. Depending on the size of the input, this can cause a high memory demand. In our numerical experiments in Section \ref{aa_sec:numexp}, we therefore also consider a spectral U-Net architecture with resizing operations. More precisely, in the contraction path and the central block, any input is downsampled via trigonometric interpolation to the size of the kernel $\hat{\kappa}_{\ell}$ before passing through the block $B_\ell$ for $\ell = -d, \hdots, 0$. Inputs that are already smaller keep their size. In the expanding path we then perform the corresponding upsampling via trigonometric interpolation. Although it follows from Corollary \ref{aa_cor:exacttrigoconv} the resizing does not cause any information loss in the spectral convolutional layers, a different behavior of both architectures is possible after the first application of the activation function.

We note that with or without resizing, the spectral U-Net architecture typically needs much larger kernels in order to catch high frequency details than a classical U-Net with the same depth. In the classical version, each convolutional layer is usually parametrized by kernels of the same height and width, e.g. $\kappa_\ell \in \C^{c_{\text{out},\ell}\times c_{\text{in},\ell} \times 3 \times 3}$ for each layer. This additionally means that the size of the kernels first grows with respect to the size of the input and then shrinks again. The kernels parametrized in the spectral convolutional layer however are not restricted to a local field of view. Therefore, although the parametrization used in the spectral convolutional layer is consistent with the integral operator intepretation and comes with the advantage of fast computations due to the fast Fourier transform, it is probably not optimal. Another option we want to explore in the next section, is to use a locally restricted parametrization, but to keep approximating an integral operator by interpolating the kernel to match the size of the input.
\subsection{Interpolating local kernels}\label{aa_subsec:interpolating}
As discussed in Corollary \ref{aa_cor:conversion}, the interpretation of spectral convolutions is to resize the kernel to match the size of the input by trigonometric interpolation. Yet, spectral convolutional layers typically need significantly larger parametrizations than usual fixed-size (spatial) kernels to capture some high-frequency information, leading to a significant increase in the number of learnable parameters and memory requirements. As an alternative and motivated by \cite{liuschiaffini2024localno_AA} and \cite{continuous_conv_AA}, we study locally restricted kernels modeled as piecewise bi-linear functions parameterized by $\tilde{k}^m \in \mathbb{R}^{m\times m}$ (for choices of $m$ such as $m=3$ or $m=5$). In other words, we interpret discrete values of a learnable kernel $\tilde{k}^m$ as evaluations of an underlying continuous function on a fixed, (resolution-independent) regular grid $x^l \in [0,1]$, $l \in [m]$, $x^l < x^{l+1}$, and approximate the underlying continuous function via bi-linear interpolation, i.e., we set
\begin{align}
    k(x,y) =& \left(1-\frac{x-x^i}{x^{i+1}-x^i}\right)\left(1-\frac{y-x^j}{x^{j+1}-x^j}\right) \tilde{k}^m_{i,j} \nonumber\\
    &+ \left(\frac{x-x^i}{x^{i+1}-x^i}\right)\left(1-\frac{y-x^j}{x^{j+1}-x^j}\right) \tilde{k}^m_{i+1,j} \nonumber\\
    &+ \left(1-\frac{x-x^i}{x^{i+1}-x^i}\right)\left(\frac{y-x^j}{x^{j+1}-x^j} \right)\tilde{k}^m_{i,j+1} \nonumber\\ 
    &+ \left(\frac{x-x^i}{x^{i+1}-x^i}\right) \left(\frac{y-x^j}{x^{j+1}-x^j}\right) \tilde{k}^m_{i+1,j+1}
    \label{aa_eq:bilinInterp}
\end{align}
for $x ^i\leq x \leq x^{i+1} \text{ and } x ^j\leq y \leq x^{j+1} $, and set $k(x,y) = 0$ if $x\leq x^1$ or $y\leq x^1$ or $x\geq x^m$ or $y\geq x^m$. Consequently, we approximate the continuous convolution \eqref{aa_eq:contconv} for a given discretization $u^n \in \mathbb{R}^{n \times n}$ via $1/n^2$ times the discrete convolution between $u^n$ and $k^n$ (see \eqref{aa_eq:discreteIntegralApproximation}), where we use the above bi-linear interpolation to create  $k^n$ from $\tilde{k}^m$. Because the above bilinear interpolation is zero if $x\leq x^1$, $y\leq x^1$, $x\geq x^m$ or $y\geq x^m$, the kernel $k(x,y)$ has a local support, and most values of $k^n$ do not have to be computed for evaluating the discrete convolution. We therefore largely retain the computational efficiency of the usual fixed-size discrete convolution. 
This approach permits an exact approximation of a certain set of kernels, namely piecewise, bilinear polynomials:
\begin{equation}
k(x,y) = ax + by + cxy + d.
\label{aa_eq:bilinPoly}
\end{equation}
Even though different sets might be reasonable from a practical point of view, the most natural choice for a set of functions preserved by bilinear interpolation are (piecewise) bilinear polynomials. That this holds can be seen by plugging in \eqref{aa_eq:bilinPoly} into \eqref{aa_eq:bilinInterp}, where it is easily visible that the four parameters are always uniquely determined by the four neighboring vertices of the sampling grid.
An important counterexample in the context of this work is the identity function, which we define here for the discretized case as follows:
$$
k^n(x,y) = \begin{cases}n^2&\text{ if } x=y=0,\\0&\text{ else.}\end{cases}
$$
The term $n^2$ is necessary as we intend to approximate \eqref{aa_eq:discreteIntegralApproximation} as close as possible. 
By bilinearly interpolating such an identity-kernel of size $m$ to size $n>m$, we observe two effects: First, the interpolation of a singular Dirac peak of height $m^2$ results in a convex combination of the peak value and a neighboring zero value, thus either keeping the value or decreasing it, contrary to the required increase to a value of $n^2$. Furthermore the just described effect also spreads the central, ``singular'' peak onto multiple neighbors, losing one of the central properties of a Dirac measure. Intuitively speaking, bilinearly interpolating a Dirac measure results in a wider and flatter function.

Similarly to Dirac measures, differential operators cannot be preserved very well by interpolation. In the following section, we therefore take a closer look at possibilities to express differential operators by convolutional layers.

\section{U-Nets with differential operators}
\label{aa_sec:differential}
In contrast to the interpretation as integral operators, we now turn to the interpretation of convolutional layers in discrete U-Nets as discretized differential operators. Differential operators are a valuable ingredient in image processing for various reasons: On the one hand, a prominent tool from classical feature extraction is edge detection, which can be related to identifying areas with a large image gradient (cf. \cite[Chapter 4]{Nixon2020_AA}). On the other hand, differential operators naturally appear in techniques from PDE-based image processing (cf. \cite{Aubert2006_AA}). In this section, we discuss possibilities to connect discrete U-Net architectures to these two paradigms.

As pointed out in \cite{liuschiaffini2024localno_AA}, the convolution \eqref{aa_eq:discconvn} can be generalized to variable input resolutions by interpreting it as the finite differences approximation of a differential operator (cf. \cite{Strikwerda2004_AA}) by scaling with the inverse spatial step-size. To motivate this, we restate Proposition 3.1 from \cite{liuschiaffini2024localno_AA} for our equispaced grid setting. In general, the proposition states that the scaled convolution converges pointwise to a directional derivative. In our specific case, we consider the limit of the scaled convolution with a fixed kernel and discretizations with increasing resolution, i.e.,
\begin{equation*}
    u^{l\cdot n} = \left(u\left(\frac{i}{l\cdot n}\right)\right)_{i \in [l\cdot n]^2} \qquad \text{with } l, n  \in \N.
\end{equation*}
Here, $n$ denotes an arbitrary but fixed base resolution and $l$ denotes the refinement. Scaling the convolution with the inverse spatial step-size corresponds to a multiplication with $l \cdot n$. We state the pointwise convergence in points $i/n = (l \cdot i)/(l\cdot n)$ for any $i \in [n]^2$. 
We further note that throughout this section, we perform the discrete convolution via zero-padding of undefined values of the input $u$. In the case of continuous functions, this can be interpreted as zero boundary conditions. To match the zero-centered indexing, we again use the domain $\Omega = (-0.5,0.5)^2$.
\begin{lemma}[{\cite[Prop. 3.1]{liuschiaffini2024localno_AA}}]\label{aa_lem:diffconvergence}
Consider $u \in C_0^1(\Omega, \R)$ and its discretizations $u^n = (u(i/n)))_{i \in [n]^2}$ using $n^2
$ equidistant sampling points for any $n \in  \N$.
Further let $\kappa \in \R^{m \times m}$, $m\in \N$, be a fixed convolution kernel that fulfills
\begin{equation}\label{aa_eq:zeromeankernel}
    \sum_{i \in [m]^2} \kappa_i = 0.
\end{equation}
Then it holds for any $i \in [n]^2$ that 
\begin{equation*}
    \lim_{l \rightarrow \infty}  (l\cdot n)\,(\kappa * u^{l\cdot n})_{l\cdot i}= \langle b, \nabla u\rangle  (i/n),
\end{equation*}
for any $i \in [n]^2$ and
\begin{equation}\label{aa_eq:bdirecderiv}
    b = -\sum_{i \in [m]^2} i \, \kappa_i.
\end{equation}
\end{lemma}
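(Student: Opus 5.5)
The plan is to write the discrete convolution at the refined grid point explicitly and Taylor-expand $u$ to first order around the limit point $x := i/n$. With $h := 1/(l\cdot n)$ and the zero-padding convention (zero boundary values, i.e.\ $u$ extended by zero outside $\Omega$), definition \eqref{aa_eq:discconvn} gives
\begin{equation*}
    (l\cdot n)\,(\kappa * u^{l\cdot n})_{l\cdot i} = \frac{1}{h}\sum_{j \in [m]^2} \kappa_j\, u\bigl(x - h j\bigr),
\end{equation*}
because $(l\cdot i - j)/(l\cdot n) = x - hj$. The kernel $\kappa$ and the index set $[m]^2$ stay fixed as $l\to\infty$, so this is a finite sum with $m^2$ terms. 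The whole question is therefore the behaviour of $m^2$ difference quotients.

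The first step is to subtract $u(x)\sum_{j}\kappa_j$, which is zero by the zero-mean assumption \eqref{aa_eq:zeromeankernel}. This turns the expression into
\begin{equation*}
    \sum_{j \in [m]^2} \kappa_j\, \frac{u(x - hj) - u(x)}{h}.
\end{equation*}
This is exactly where \eqref{aa_eq:zeromeankernel} is needed: without it, the term $u(x)\sum_j\kappa_j/h$ would diverge.

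The second step is to apply the mean value theorem, or equivalently differentiability of $u$ at $x$. Since $u\in C^1$, we have
\begin{equation*}
    u(x-hj)-u(x) = -h\langle j,\nabla u(x)\rangle + o(h)
\end{equation*}
for each fixed $j$. Dividing by $h$ and letting $l\to\infty$ (so $h\to 0$) gives each summand's limit $-\kappa_j\langle j,\nabla u(x)\rangle$. Summing these finitely many limits yields $\langle b,\nabla u(x)\rangle$ with $b=-\sum_j j\,\kappa_j$, which is \eqref{aa_eq:bdirecderiv}.

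The main obstacle is justifying the Taylor step when the stencil points $x-hj$ leave $\Omega$, which can happen when $x$ lies on or near the boundary. Here I would use the extension of $u$ by zero, which is the function that zero padding actually samples. Since $u\in C_0^1(\Omega)$, this extension is $C^1$ on $\R^2$, so the expansion holds uniformly at every point. For interior points $x$ the issue does not arise at all, because $x-hj\in\Omega$ once $h$ is small enough.

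One minor bookkeeping point is checking that the shifted indices $l\cdot i - j$ match grid points of resolution $l\cdot n$. They do, up to padding, by the zero-centered indexing convention.
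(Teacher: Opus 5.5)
Your proof is correct and follows the same route as the paper, which defers to the first-order Taylor expansion argument in the reference it cites: you write the scaled convolution as a finite sum of difference quotients, use the zero-mean condition to remove the divergent $u(x)\sum_j\kappa_j/h$ term, and pass to the limit termwise. Because you work directly with the convolution rather than the cross-correlation, you get the sign $b=-\sum_j j\,\kappa_j$ immediately, without the sign-flip remark the paper needs. Your boundary handling is also fine, provided $C_0^1(\Omega)$ is read so that the zero extension is $C^1$ (e.g.\ compact support); the statement itself needs that reading at the boundary sample points $i/n=-1/2$ that occur for even $n$.
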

\begin{proof}
    The proof follows from the first order Taylor expansion of $u$ and can be found in \cite[Appendix A.1]{liuschiaffini2024localno_AA}. For the derivation of the explicit form of $b$ we note that the result in \cite{liuschiaffini2024localno_AA} is stated for the cross-correlation (denoted by $\star$) instead of the convolution (denoted by $*$), which causes opposite signs of $b$.
\end{proof}
A kernel that fulfills \eqref{aa_eq:zeromeankernel} is called a differential kernel.
In the implementation provided by the authors of \cite{liuschiaffini2024localno_AA}, it is proposed to transform any kernel $\kappa$ into a differential kernel by subtracting the sum over the entries of $\kappa$ from the element $\kappa_{0,0}$.\footnote{We would like to thank Miguel Liu-Schiaffini for sharing this insight. The code is available at \url{https://github.com/neuraloperator/neuraloperator/blob/main/neuralop/layers/differential_conv.py}} This approach has the additional advantage that it does not alter the vector $b$, since $\kappa_{0,0}$ is always multiplied by $(0,0)$ in \eqref{aa_eq:bdirecderiv}.

A consequence of condition \eqref{aa_eq:zeromeankernel} one should keep in mind is that convolutions with differential kernels cannot approximate the identity anymore. In the architecture proposed in \cite{liuschiaffini2024localno_AA} this issue is resolved by adding residual connections. To stay as close to the classical convolutional layer as possible, we instead adapt the definition of our differential convolutional layer: For a kernel $\kappa \in \R^{m\times m}$ and inputs $u \in \R^{n \times n}$ we define the differential convolutional layer $\operatorname{Diff}^n$ as
\begin{equation*}
    \operatorname{Diff}^n(\kappa, u) := n \,\kappa * u + (1-n)\,\tilde{\kappa}\,u, \qquad \text{with } \tilde{\kappa}= \sum_{i \in [m]^2} \kappa_i.
\end{equation*}It follows directly from Lemma \ref{aa_lem:diffconvergence} that the differential convolutional layer still approximates a directional derivative but is also capable to reproduce the identity.\begin{corollary}
    Consider $u \in C_0^1(\Omega, \R)$ and its discretizations $u^n = (u(i/n))_{i \in [n]^2}$ using $n^2$ equidistant sampling points for any $n \in  \N$.
Further let $\kappa \in \R^{m \times m}$, $m\in \N$, be any fixed convolution kernel. Then it holds that
\begin{equation*}
\lim_{l \rightarrow \infty} \operatorname{Diff}^{l\cdot n}(\kappa,u^{l\cdot n})_{l\cdot i} = \left(\tilde{\kappa}\, u + \langle b, \nabla u \rangle\right) (i/n),
\end{equation*}
for any $i \in [n]^2$ and $b$ as in \eqref{aa_eq:bdirecderiv}.
\end{corollary}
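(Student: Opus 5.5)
The idea is to reduce the statement to Lemma \ref{aa_lem:diffconvergence} by splitting the kernel into a zero-mean part and a multiple of the discrete Dirac. Write $\delta \in \R^{m\times m}$ for the kernel with $\delta_0 = 1$ and $\delta_j = 0$ for $j \neq 0$; for $m$ even, $0 \in [m]$ still holds by the definition of $[m]$. Set
\begin{equation*}
    \kappa^0 := \kappa - \tilde{\kappa}\,\delta .
\end{equation*}
Then $\sum_{j\in[m]^2}\kappa^0_j = \tilde{\kappa} - \tilde{\kappa} = 0$, so $\kappa^0$ is a differential kernel in the sense of \eqref{aa_eq:zeromeankernel}. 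Moreover, $\kappa^0$ has the same vector $b$ as $\kappa$ in \eqref{aa_eq:bdirecderiv}, because the correction only changes the entry at index $0$. This is exactly the observation made after Lemma \ref{aa_lem:diffconvergence}.

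Next I express the layer through $\kappa^0$. Convolution is linear in the kernel, and $\delta * v = v$ for every $v \in \R^{N\times N}$ with zero padding, since only the term $j=0$ survives in \eqref{aa_eq:discconvn}. Hence for any resolution $N$,
\begin{equation*}
    \operatorname{Diff}^N(\kappa, v) = N\,\kappa^0 * v + N\tilde{\kappa}\, v + (1-N)\tilde{\kappa}\, v = N\,\kappa^0 * v + \tilde{\kappa}\, v .
\end{equation*}

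Now take $N = l\cdot n$ and $v = u^{l\cdot n}$, and evaluate at the index $l\cdot i$. The second term is $\tilde{\kappa}\, u^{l\cdot n}_{l\cdot i} = \tilde{\kappa}\, u\big((l i)/(l n)\big) = \tilde{\kappa}\, u(i/n)$, which is constant in $l$. For the first term, Lemma \ref{aa_lem:diffconvergence} applied to $\kappa^0$ gives
\begin{equation*}
    (l\cdot n)\,(\kappa^0 * u^{l\cdot n})_{l\cdot i} \to \langle b, \nabla u\rangle(i/n) \qquad (l \to \infty).
\end{equation*}
Adding the two limits yields the claim.

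There is no real obstacle here. The only points to check are that the zero-padded convolution is linear in the kernel, that it reproduces the input when convolved with $\delta$, and that subtracting $\tilde{\kappa}\,\delta$ leaves $b$ unchanged because $\delta$ is supported at the origin.
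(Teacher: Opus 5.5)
Your proof is correct and follows the paper's route. The paper only says the corollary "follows directly from Lemma \ref{aa_lem:diffconvergence}" and, after that lemma, notes that subtracting $\tilde{\kappa}$ at $\kappa_{0,0}$ leaves $b$ unchanged; your splitting $\kappa = \kappa^0 + \tilde{\kappa}\,\delta$ and the identity $\operatorname{Diff}^N(\kappa,v) = N\,\kappa^0 * v + \tilde{\kappa}\, v$ spell out exactly that step.
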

The following example visualizes how we can use the layer $\operatorname{Diff}^n$ to approximate any directional derivative and multiple of the identity.
\begin{example}\label{aa_ex:dirderivative}
    Consider the kernel
    \begin{equation*}
        \kappa = \begin{pmatrix}
            0 & 0 & 0 \\
            0&\kappa_1+\kappa_2+c & \hphantom{..}-\kappa_1\\
            0 & -\kappa_2& 0
        \end{pmatrix},
    \end{equation*}
    with $\kappa_1, \kappa_2, c \in \R$. We compute $\tilde{\kappa} = c$ and thus for any $u \in C_0^1(\Omega, \R)$, $u^n$ as above, it follows for any $i \in [n]^2$ that
    \begin{equation*}
        \lim_{l \rightarrow \infty} \operatorname{Diff}^{l \cdot n}(\kappa, u^{l\cdot n})_{l\cdot i} = \left(c\,u + \kappa_2 \,\partial_1 u + \kappa_1\, \partial_2 u\right) (i/n).
    \end{equation*}
    The reason that $\kappa_2$ controls the horizontal derivative $\partial_1 u$ and $\kappa_1$ controls the vertical derivative $\partial_2 u$ lies in the discretization of $u$ as a matrix, which rotates the coordinate system by $-\pi /2$. In practice one would rather consider discretizations $(u((i_2, -i_1)/n))_{i\in \N}$ which does not cause the described switch of axes.
\end{example}
In the remainder of this section we want to investigate how the idea of differential convolutions can be incorporated into U-shaped neural network architectures for image processing. Here, we distinguish between the role as a feature extractor on the one hand and its integration into the approximation of a partial differential equation on the other hand. 
\subsection{Feature extraction perspective}
We first want to subsume the discussed differential convolutions in the context of feature extraction. This means that we understand the purpose of the differential convolutional layer as returning a directional derivative as a feature that characterizes a given function. 
It is important to note that in the case of image processing, the assumption of differentiability is typically only fulfilled piecewise. Instead, images are assumed to lie in the space of functions of bounded variation, $BV$. In particular, the space $BV(\Omega)$ consists of all function $u \in L^1(\Omega)$, for which there exists a finite, vector-valued Radon measure $Du$ in $\Omega$ such that
\begin{equation*}
    \int_{\Omega} u\,\operatorname{div}\phi \,dx = -\int_{\Omega} \langle \phi, dDu\rangle \quad \text{ for any test function }\phi \in C^1_0(\Omega, \R^2).
\end{equation*}
The measure $Du$ is called the distributional derivative of $u$. We refer to \cite[Chapter 3]{Ambrosio2000_AA} for a profound introduction. If a function $u$ is weakly differentiable, $Du$ is absolutely continuous with respect to the Lebesgue measure and its density is given by the weak derivative of $u$. In this case, the application of a differential convolutional layer should not cause any problems. The more interesting case however, is to consider input images with sharp edges, or, in other words, piecewise differentiable functions that contain jumps.  For such inputs, a clarification of the purpose of the feature extraction layer is required.

If the aim is to approximate $Du$ in regions where $u$ is differentiable, the differential convolutional layer will return the desired output in these regions. At jumps however, the scaling with the inverse spatial step-size will cause divergence of the outputs as the number of sampling points tends to infinity. This issue could be approached for example by adding a truncation layer that cuts off high gradients. 

In contrast to this, an important part of classical feature extraction is edge detection (cf. \cite[Chapter 4]{Nixon2020_AA}), which is concerned with the jumps of the input function. Therefore, it could be more suitable to have a layer that extracts the location and the height of jumps, which would then correspond to the classical convolutional layer again. Here, the output would consist of the sparse set of edges, while in differentiable regions it would vanish with an increasing number of sampling points. While the gradient information contained in the differential part might be negligible (for example in the case of piece-wise constant input functions), only extracting the edges of an image might not be sufficient to solve a given problem. In many cases the architecture would still need a mechanism to fill in, or, ``color in'' the extracted edges, for example by a learned upsampling method. Additionally, the function of deeper layers in the contracting path is not clear from these derivations. 

We thus leave this direction for future work and summarize our considerations in Figure \ref{aa_fig:jump_approximation} and the following example.
\begin{figure}
    \centering
    \includegraphics[width=0.8\linewidth, trim = {4cm 0cm 5cm 0cm}, clip]{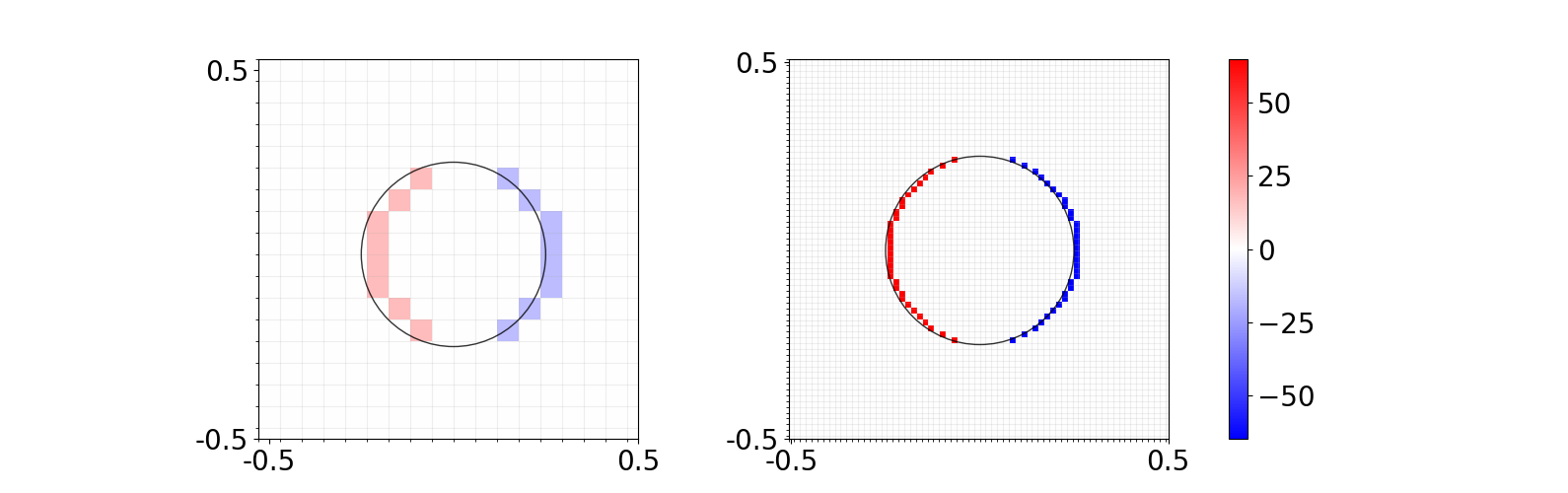}
    \caption{Outputs of the differential layer for two different input resolutions. Left: $\operatorname{Diff}^{17}(\kappa, u^{17})$, right: $\operatorname{Diff}^{65}(\kappa, u^{65})$, with $u$ and $\kappa$ as described in Example \ref{aa_ex:jumpexample}. The black circle corresponds to the boundary $\partial B_{1/4}(0)$. The maximum magnitude of the outputs grows with the number of samples. The output in the boundary regions does not contain the full information about the corresponding normal vector in $\partial B_{1/4}(0)$.}
    \label{aa_fig:jump_approximation}
\end{figure}
\begin{example}\label{aa_ex:jumpexample}
    On the zero-centered domain $\Omega = (-0.5, 0.5)^2$, with the corresponding Borel $\sigma$-algebra $\mathcal{B}$, we consider the function $u \in BV(\Omega)$, defined for a constant $c > 0$ as 
    \begin{equation*}
        u(x) = \begin{cases}
         c & \text{if } x\in \overline{B_{1/4}\left(0\right)},\\
            0 & \text{otherwise.}
        \end{cases}
    \end{equation*}
    In the interior of $B_{1/4}(0)$ and $\Omega \setminus B_{1/4}(0)$ the function $u$ is differentiable (with $\nabla u = 0$) and the jump set is given by $\partial B_{1/4}(0)$. It follows from Gauß's Theorem (see e.g, \cite[A8.8]{Alt2016_AA}) that the distributional derivative $D u$ can be computed for sets $A \in \mathcal{B}$ as
    \begin{align*}
        Du(A) = \int_{A \cap B_{1/4}(0)} \nabla u \, dx - &c \int_{A \cap \partial B_{1/4}(0)} x/\|x\| \, d\mathcal{H}^{1}(x) \\= \,-&c \int_{A \cap \partial B_{1/4}(0)}  x/\|x\|  \, d\mathcal{H}^{1}(x),
    \end{align*}
    where $x/\|x\|$ corresponds to the normal vector at $x \in \partial B_{1/4}(0)$ and $\mathcal{H}^1$ denotes the $1$-dimensional Hausdorff measure (cf. \cite{Alt2016_AA}).
    For the sake of simplicity, we only consider the first component of the derivative, $Du_1$. For a differentiable function, we know from Lemma \ref{aa_lem:diffconvergence} and Example \ref{aa_ex:dirderivative} that $\partial_1 u $ can be approximated by differential layers $\operatorname{Diff}^{l\cdot n}(\kappa,\cdot)$, $l \rightarrow \infty$, with kernel $\kappa = (0,1, -1)^T$. It directly follows that for any $n \in \N$, $i \in [n]^2$ that
    \begin{equation*}
       \lim_{l\rightarrow \infty} \operatorname{Diff}^{l\cdot n}(\kappa, u^{l \cdot n})_{l \cdot i} = \partial_1u(i/n),
    \end{equation*}
    as long as $i > 0$ or $\|i/n\| \neq 1/4$. However, if $i \leq 0$ and $\|i/n\|= 4$, the differential layer approximates the jump and the above limit is infinity. Moreover, we can show that 
    \begin{equation*}
        \|\operatorname{Diff}^{n}(\kappa, u^{n})\|_{\infty} \longrightarrow \infty,  \text{ as }n \rightarrow \infty,
    \end{equation*}
    since for any $n \in \N$ there exists $i \in [n]^2$ such that $\|i/n\| \geq 1/4$, but $\|(i_1,i_2-1)/n\| < 1/4$ and thus $\operatorname{Diff}^{n}(\kappa, u^{n})_i = n\, c$. The diverging entries could be bounded by applying a thresholding operation, but $c$, the height of the jump, cannot be recovered with this approach. 

    Next, we consider the behavior of the unscaled, classical convolutional layer. Due to the missing scaling, we get for any $n \in \N$, $i \in [n]^2$ 
    \begin{equation*}
       \lim_{l\rightarrow \infty} (\kappa * u^{l \cdot n})_{l \cdot i} = 0,
    \end{equation*}
    as long as $i > 0$ or $\|i/n\| \neq 1/4$. Although in our example, this coincides with the actual gradient $\partial_1(i/n)$, the limit would also be zero in the case of a non-trivial gradient, for example for a function that is not constant in $\overline{B_{1/4}\left(0\right)}$. In contrast at any positive jump, i.e., $i \in [n]^2$ such that $\|i/n\| \geq 1/4$ and $\|(i_1,i_2-1)/n\| < 1/4$, we get $(\kappa * u^n)_i = c$, and analogously, we get $(\kappa * u^n)_i = -c$ at negative jumps. This means, that we can approximate the position and the height of jumps. However, the directional information $x/\|x\|$ contained in $Du$ is reduced to the elementwise sign $\sgn(x)$.
    \end{example}

\subsection{PDE perspective}
Another way to interpret differential convolutional layers is as a component of finite difference schemes to approximate the solution of a partial differential equation (PDE). The essential ingredient here are residual connections. These have been studied in a range of works \cite{chen2018neural_AA, e2017dynamical_AA, haber2017stable_AA, thorpe2023deep_AA} in the context of finite difference schemes to approximate solutions of ordinary differential equations. Further incorporating spatial derivatives, \cite{ruthotto2020deep_AA} proposes to design residual convolutional neural networks based on finite difference schemes for partial differential equation. In this section, we want to investigate how this approach transfers to the setting of variable input resolutions. Similar to the approach described in \cite{ruthotto2020deep_AA}, we now consider kernels $\kappa$ that implement finite differences of order $\alpha \in \N$, for example  
\begin{equation*}
    \kappa = \begin{pmatrix}
        0 & 0 & 0 \\
        0 &2 & -1 \\
        0 &-1 & 0
    \end{pmatrix},\;\text{or,}\; \kappa =\begin{pmatrix}
        0 & 1 & 0 \\
        1 & -4 & 1\\
        0 & 1 & 0
    \end{pmatrix}.  
\end{equation*}
which (after appropriate scaling with the pixel size) approximate first derivates or a Laplacian, respectively.
We rewrite the convolutional layer with a residual connection in iterative form as
\begin{equation}\label{aa_eq:residual_cnn}
    u^n(t_{k+1}) = u^n(t_k) + \operatorname{ReLU}\left(\kappa* u^n(t_k)\right).
\end{equation}
In the following, we want to show that for suitable kernels, this iterative formulation can be seen as a stable finite difference scheme to approximate the solution of a partial differential equation.
To do so, we introduce a time discretization determined by the time step-size $\Delta t$, $t_k = k\Delta t$ for any $k \in \N$ and denote the spatial step-size by $\Delta x$. We recall that for discretizations $u^n$ sampled on a regular $n\times n$-grid, it holds that $\Delta x = 1/n$. 
Further introducing a discretization independent constant $c \in \R$, we consider the finite difference scheme 
\begin{equation}\label{aa_eq:findiffscheme}
    u^n(t_{k+1}) = u^n(t_k) + c\,\Delta t \,\operatorname{ReLU}\left(\frac{\kappa* u^n(t_k)}{{\Delta x}^\alpha}\right),
\end{equation}
where $\alpha \in \N$ determines the order of the derivative.\footnote{We note that to obtain a approximation of a derivative of order $\alpha$, the kernel $\kappa$ has to fulfill conditions similar to \eqref{aa_eq:zeromeankernel}.} Such finite difference schemes are called conditionally stable if under a suitable relation between $\Delta t$ and $\Delta x$ for any $T > 0$, there exists a constant $C^T$ independent of $\Delta t$, $\Delta x$ such that
\begin{equation*}
    \|u^n(t_k)\| \leq C^T \|u^n(t_0)\|\qquad \text{for any } k \text{ such that }t_k \leq T.
\end{equation*}
We refer to \cite[Definition 1.5.1]{Strikwerda2004_AA} for a more general definition. Comparing \eqref{aa_eq:residual_cnn} to \eqref{aa_eq:findiffscheme}, we see that the classical convolution without rescaling of $\kappa$ corresponds to  
$$ c \Delta t/(\Delta x)^{\alpha} = 1.$$
Such a choice is reminiscent of the Courant-Friedrichs-Levy (CFL) condition needed for stable explicit discretizations of a partial differential equation (cf. \cite[Theorem 1.6.1]{Strikwerda2004_AA}). We note that for the above condition to be sufficient to obtain conditional stability, it requires further assumptions on $\kappa$ that control the scaling and the direction of the finite difference approximation and refer to \cite{Strikwerda2004_AA} for further details.

On the other hand, the above explicit Euler scheme is not stable if the CFL condition is violated in the sense that $\Delta t/(\Delta x)^{\alpha} \rightarrow \infty$.\footnote{This can be seen as follows: For an arbitrary 
kernel $\kappa$ choose a non-zero entry $\kappa_j$ and $u^n_{-j}(t_0) = \kappa_j$, zero everywhere else. Then $\|u^n(t_0)\| = \kappa_j$ and $(\kappa * u^n(t_0))_{0} = \kappa_j^2$, thus $\|u^n(t_1)\| \geq ||\kappa_j| - |c| \,\Delta t/(\Delta x)^{\alpha} \kappa_j^2| \rightarrow \infty$ as $\Delta t/(\Delta x)^{\alpha} \rightarrow \infty$.} 
However this divergence of $\Delta t/(\Delta x)^{\alpha}$ does occur if the  convolutional layer is substituted by a differential layer that approximates a derivative of order $\alpha$. More precisely, the iterative scheme \begin{equation*}
    u^n(t_{k+1}) = u^n(t_k) + \,\operatorname{ReLU}\left(\operatorname{Diff}_\alpha^n(\kappa, u^n(t_k))\right),
\end{equation*}
with
$$  \operatorname{Diff}_\alpha^n(\kappa, u^n(t_k)) = \frac{\kappa* u^n(t_k)}{{\Delta x}^\alpha},$$
corresponds to choosing the constant time step-size $\Delta t =  1/c$.

From the above derivations we conclude that the classical convolutional layer seems more suitable to implement the described PDE-perspective.
However, we also see that choosing $\Delta t = c\,\Delta x^\alpha$ implicitly means that approximating the solution at a fixed time $T$ requires more and more time steps (respectively layers) as the resolution increases. In contrast, the classical U-Net architecture consists of a fixed number of layers. Hence, we expect a decreasing  effect on the reconstruction quality of such a network architecture for increasing resolution, which can partly be observed from the computational results below.   
We leave a more detailed study of this direction for future work.

\section{Numerical experiments}\label{aa_sec:numexp}
\subsection{A toy example}
As a starting point for understanding different ways to handle varying discretizations of the same inverse problem, consider deblurring a signal that is a superposition of finitely many dirac deltas, i.e., consider
\begin{align}
    f = Au + n, \qquad u(x) = \sum_{j=1}^N \alpha_j \delta(x-x_j), \qquad A u = k*u
\end{align}
for a Gaussian kernel $k$, and non-negative coefficients $\alpha_j >0$. For any discretization, i.e., sampling of $f$ at discrete positions $i$, leading to $\tilde{f} =(f(x_i))_{i=1,\hdots,n} \in \mathbb{R}^n$, we reconstruct an approximation $\tilde{u} \in \mathbb{R}^n$ of $u$ on the same spatial grid. 

As we expect $\tilde{u}$ to be non-negative and sparse, we use an $\ell^1$ regularization as well as a non-negativity constraint along with a quadratic data fidelity term (assuming measurements of $\tilde{f}$ with Gaussian noise) to compute the classical model-based reconstruction
\begin{align}
\tilde{u} = \arg \min_{u\geq 0} \frac{1}{2}\|\tilde{A}u - \tilde{f}\|^2 + \gamma \|u\|_1, 
\end{align}
for $\tilde{A}u = \tilde{k}*u$ being the convolution with a discrete Gaussian kernel. Using a projected gradient descent for the above minimization leads to iterations
\begin{align}
\label{aa_eq:gradProj}
    \tilde{u}^{k+1} =& \text{proj}_{ \cdot \geq 0 }\left(\tilde{u}^k - \tau \tilde{A}^*(\tilde{A}\tilde{u}^k - \tilde{f}) - \tau \gamma  \right) \\
\label{aa_eq:gradProj_net1}
    =& \text{ReLU}\left( \underbrace{\tilde{g}}_{\text{kernel}} * \ \tilde{u}^k \  + \  \underbrace{(- \tau \gamma)}_{\text{bias}} \  +\   \underbrace{\tau\tilde{u}^0}_{\text{skip con. from input}} \right) \\
\label{aa_eq:gradProj_net2}
     =& \text{ReLU}\left( \underbrace{\tilde{u}^k}_{\text{skip con.}} \   +\   \underbrace{\tilde{h}}_{\text{kernel}} * \ \tilde{u}^k \  + \  \underbrace{(- \tau \gamma)}_{\text{bias}} \  +\  \underbrace{\tau\tilde{u}^0}_{\text{skip con. from input}} \right).
\end{align}
We have interpreted the (provably convergent) gradient projection iterations as two slightly different but equivalent convolutional neural networks: For an input $u^0 = \tilde{A}^*f$ (being an approximate reconstruction obtained from applying the adjoint operator to the measurements), we use skip connections (weighted with $\tau$) from the input to the output of every convolutional layer. We obtain two different interpretations of the networks by either using a kernel $\tilde{g}:= I - \tau (\tilde{k}*\tilde{k})$ that represents $(I - \tau \tilde{A}^*\tilde{A})$,\footnote{Please note that the convolution with a Gaussian kernel is self-adjoint.} or using a kernel $\tilde{h}:= -\tau (k*k)$ and making the skip connection from the previous layer explicit.

While the two formulations, \eqref{aa_eq:gradProj_net1} and \eqref{aa_eq:gradProj_net2}, are of course identical, they start to differ if one considers them to be architectures that ought to handle different resolutions (discretizations) with the same parameters. In this case, we have several options:

\begin{figure}
    \centering
\begin{subfigure}{.43\linewidth}
    \includegraphics[width=\textwidth]{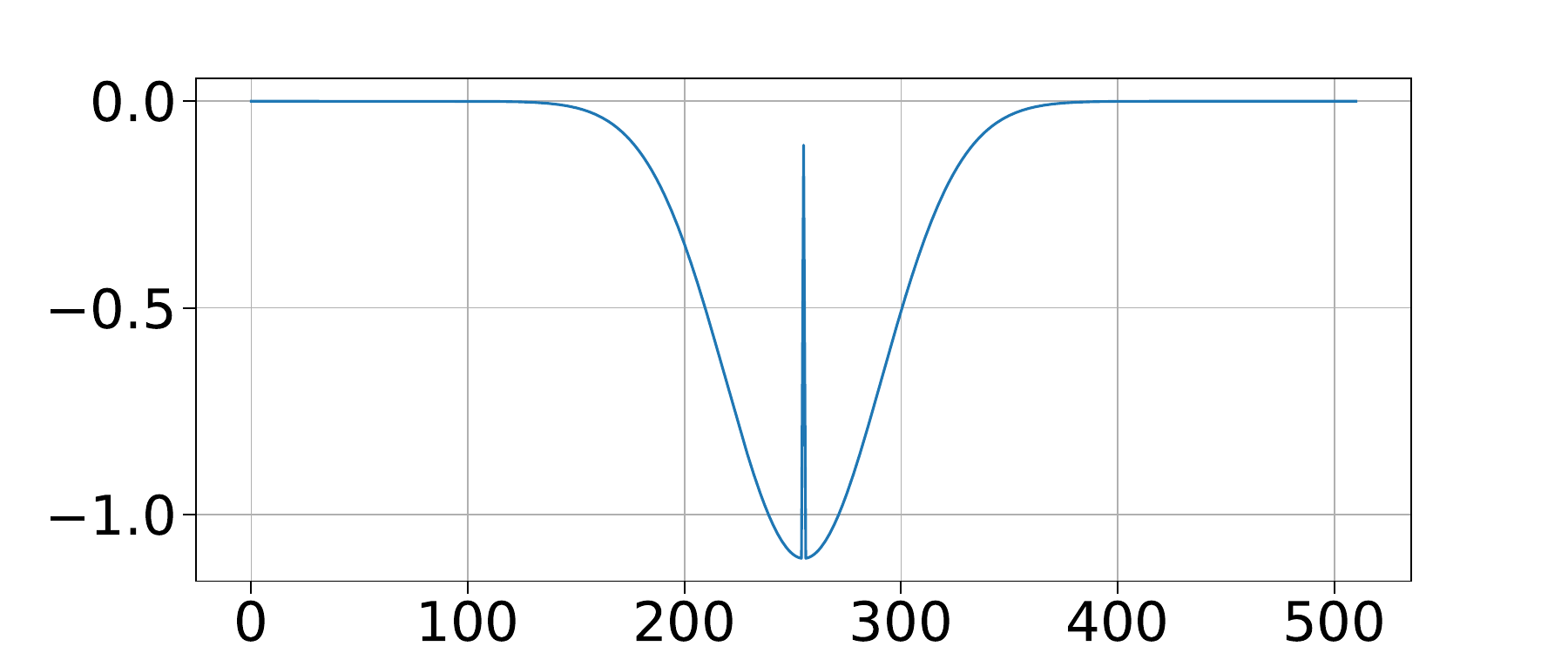} 
    \subcaption{True Kernel.}
\end{subfigure}
\begin{subfigure}{.43\linewidth}
    \includegraphics[width=\linewidth]{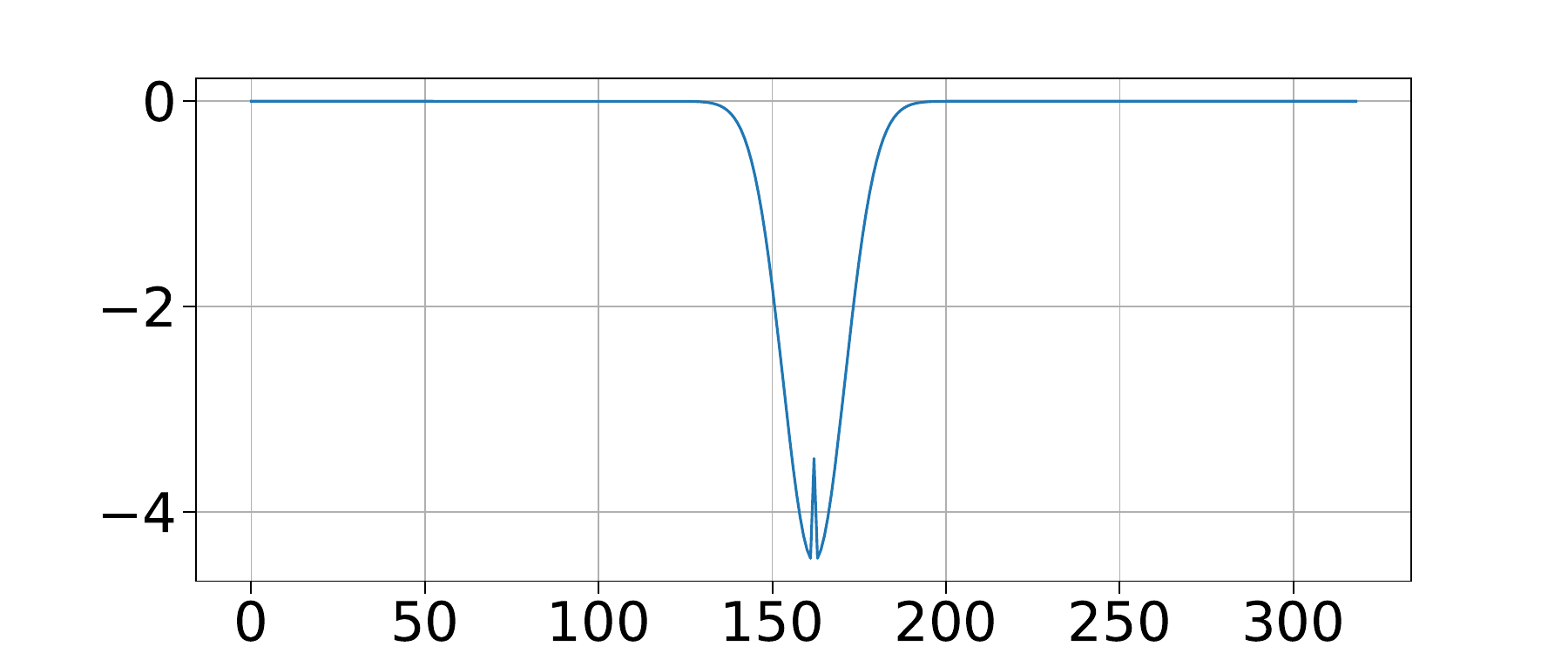}
    \subcaption{Small kernel $\tilde{h}$.}
\end{subfigure}
\begin{subfigure}{0.43\linewidth}
    \includegraphics[width=\linewidth]{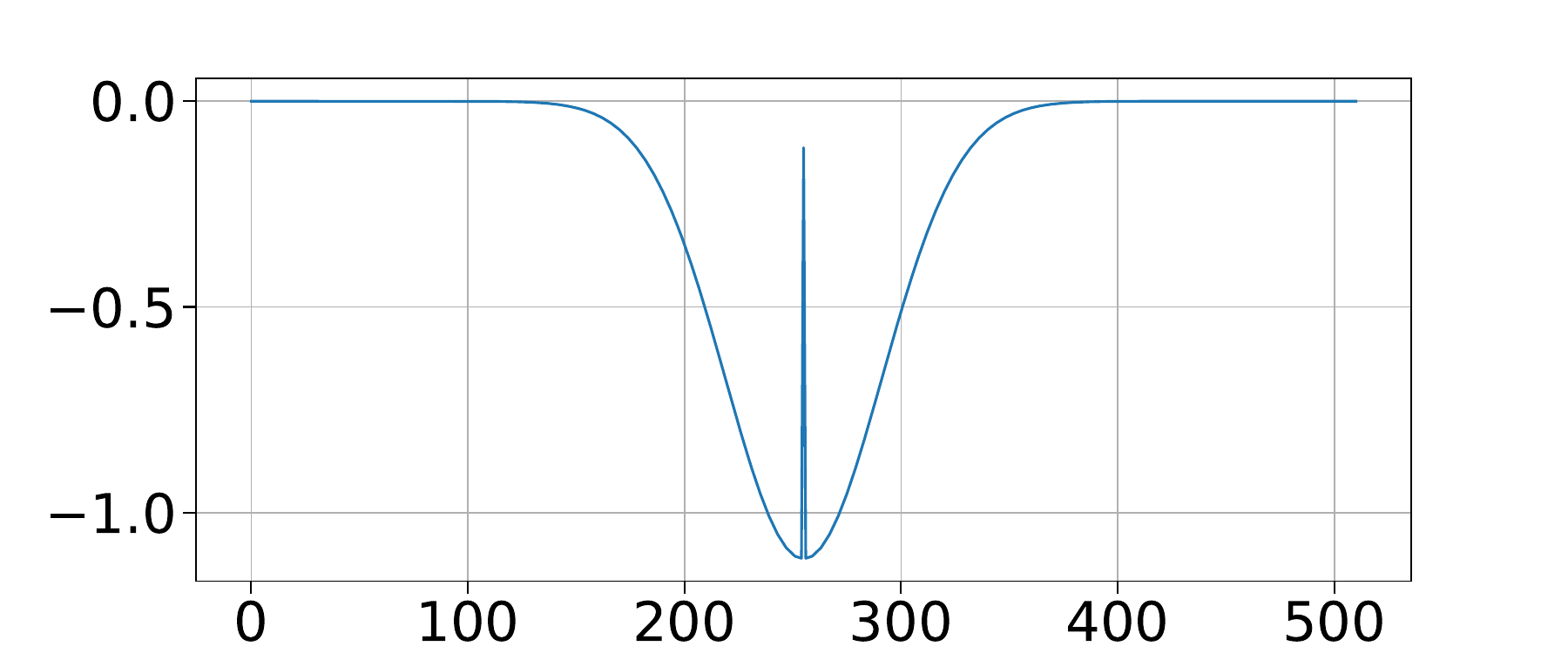}
    \subcaption{Interpolated Kernel $\tilde{h}$.}
\end{subfigure}
\begin{subfigure}{.43\linewidth}
    \includegraphics[width=\linewidth]{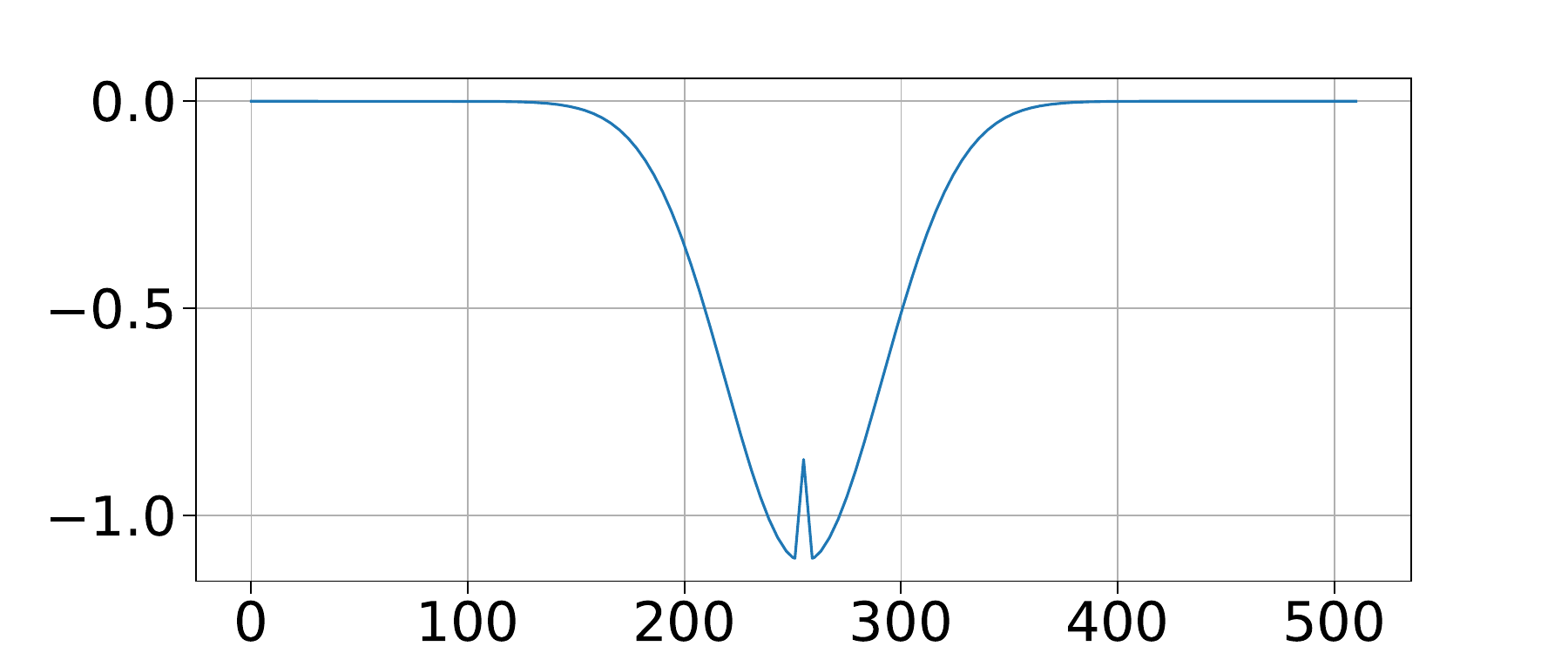}
    \subcaption{Interpolated Kernel $\tilde{g}=I-\tilde{h}$.}
\end{subfigure}
\begin{subfigure}{.43\linewidth}
    \includegraphics[width=\linewidth]{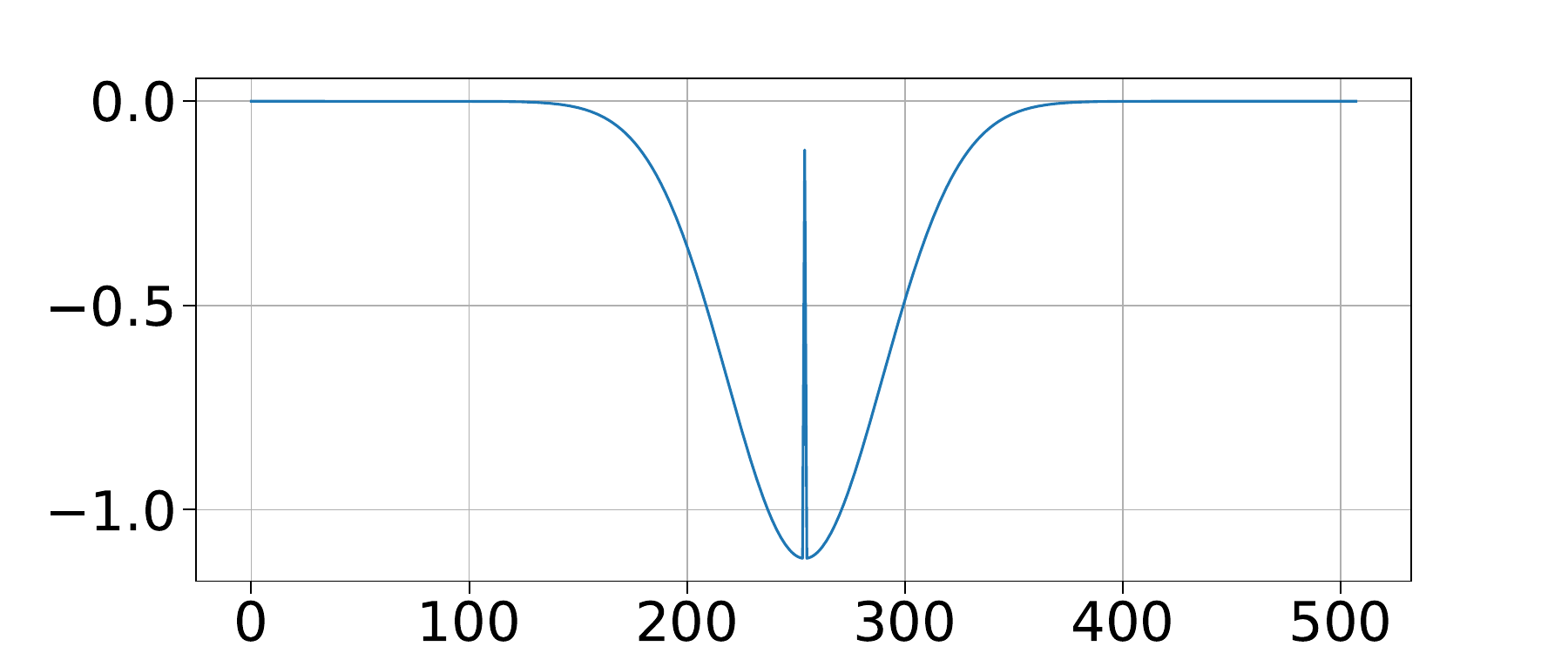}
    \subcaption{Spectral kernel $\tilde{h}$.}
    
\end{subfigure}
\begin{subfigure}{.43\linewidth}
    \includegraphics[width=\linewidth]{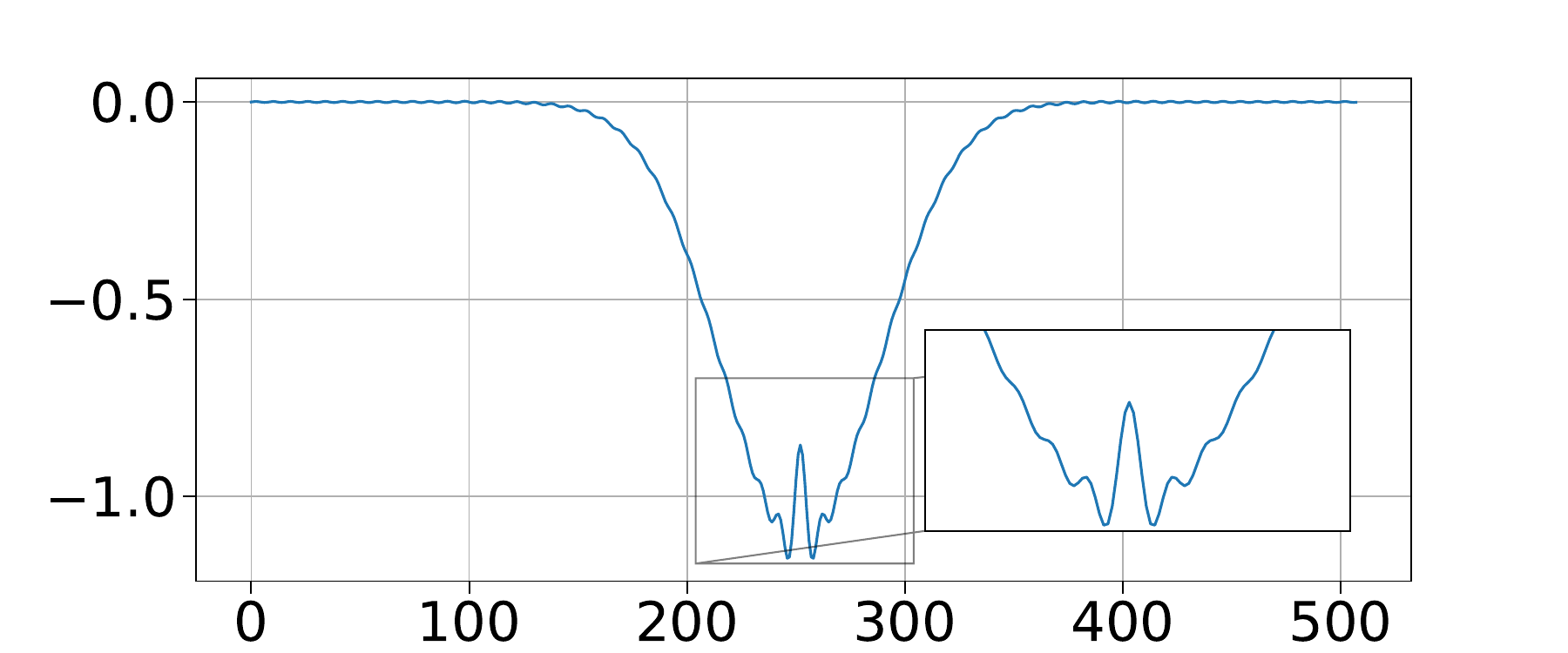}
    \subcaption{Spectral kernel $\tilde{g}=I-\tilde{h}$.}
    
\end{subfigure}
    
    \caption{The different kernels. Interpolating as well as padding the kernel  $\tilde{h}$ in the Fourier space produces a result that is indistinguishable from the true kernel. When interpolating $\tilde{g}=I-\tilde{h}$, the skip connection is no longer accurately preserved.The same issue arises when $\tilde{g}$ is padded in Fourier space, which leads to a sinc-like artifact in the resulting kernel.}
    \label{aa_fig:kernelIllustration}
\end{figure}
\begin{itemize}
    \item Use the same kernel ($\tilde{g}$ or $\tilde{h}$) for different resolutions. In this case the two network architectures remain identical. Yet, they will not generalize across different resolutions at all, since a finer discretization (larger $n$) requires a wider kernel.
    \item Parameterize the kernel with a few parameters (suitable for a small $n$) and use linear interpolation to handle different discretizations. In this case, the two approaches, \eqref{aa_eq:gradProj_net1} and \eqref{aa_eq:gradProj_net2}, start to differ as \eqref{aa_eq:gradProj_net1} includes the skip connection into the interpolation while \eqref{aa_eq:gradProj_net2} does not, see Fig.~\ref{aa_fig:kernelIllustration}. Based on how the direct application of the gradient projection algorithm at the correct resolution would look like, we expect \eqref{aa_eq:gradProj_net2} to work significantly better. 
    \item Write the convolutions in \eqref{aa_eq:gradProj_net1} and \eqref{aa_eq:gradProj_net2} as multiplications in Fourier space. Consider the Fourier coefficients for a certain resolution (small $n$) to be the parameters of an FNO-inspired network and apply the convolutions with zero padding at higher resolution. Note that it is potentially necessary to rescale the kernels after interpolation in order to preserve the integral value (see also chapter \ref{aa_subsec:fourier_transformation} Transforming the Fourier padded kernel back to the spatial domain, Fig.~\ref{aa_fig:kernelIllustration} illustrates their shape. As we can see, the interpolated kernel $\tilde{h}$ remains unaffected. However, this is not the case for the kernel $\tilde{g}$: adding the skip connection and padding modifies the kernel such that, in the Fourier domain, it behaves like a rectangular window function. Consequently, this results in a $\text{sinc}$-function in the time domain (see Figure ~\ref{aa_fig:kernelIllustration}).
      
\end{itemize}

\begin{figure}
    \centering
    \begin{subfigure}{0.43\linewidth}
     \includegraphics[width=\linewidth]{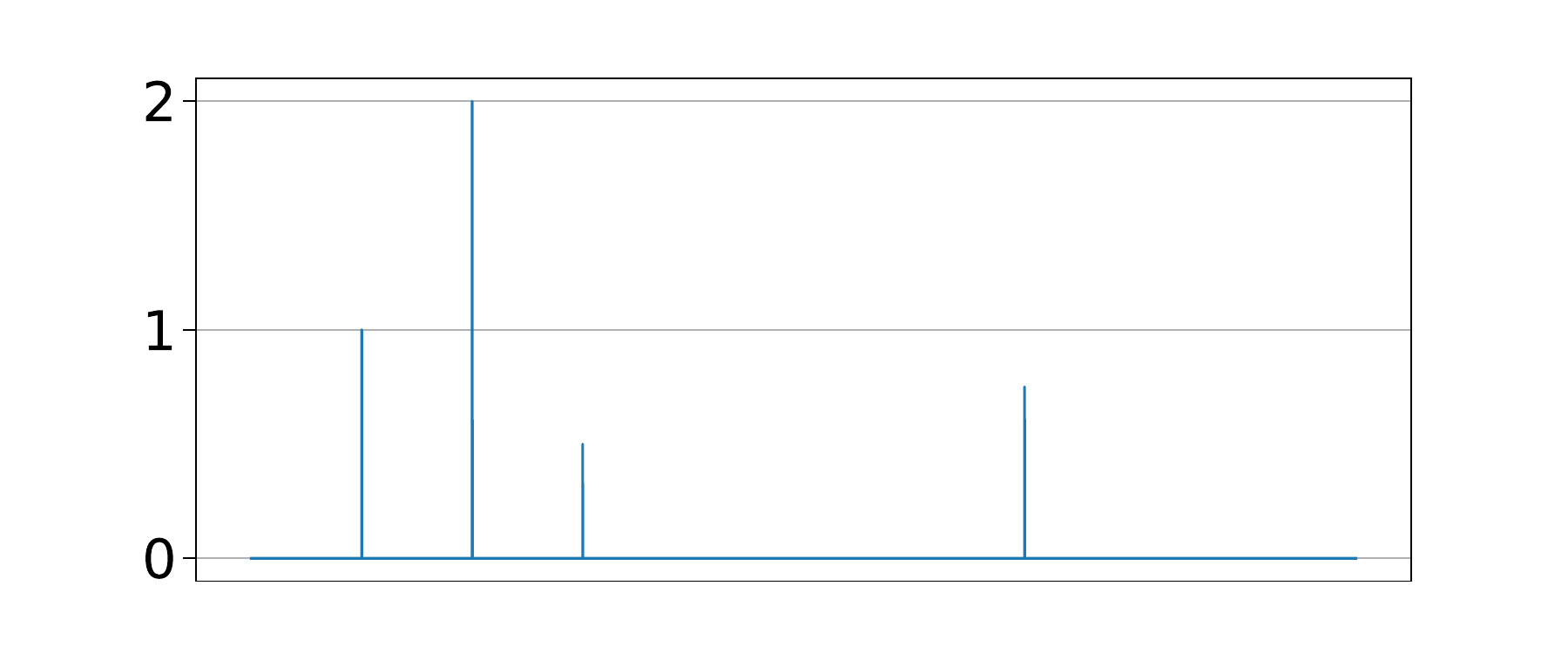}
     \subcaption{Ground truth signal}
\end{subfigure}
\begin{subfigure}{0.43\linewidth}
     \includegraphics[width=\linewidth]{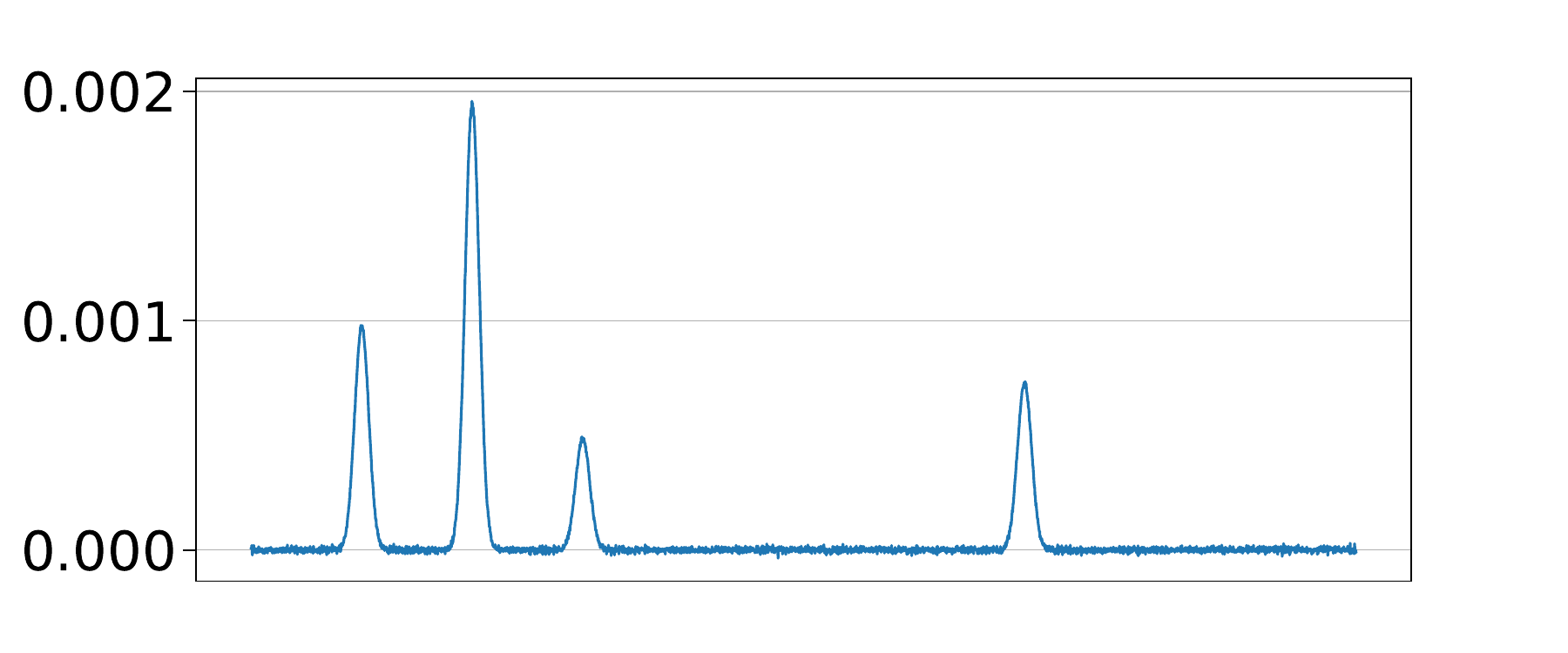}
     \subcaption{Noisy measurement.}
\end{subfigure}
\begin{subfigure}{0.43\linewidth}
     \includegraphics[width=\linewidth]{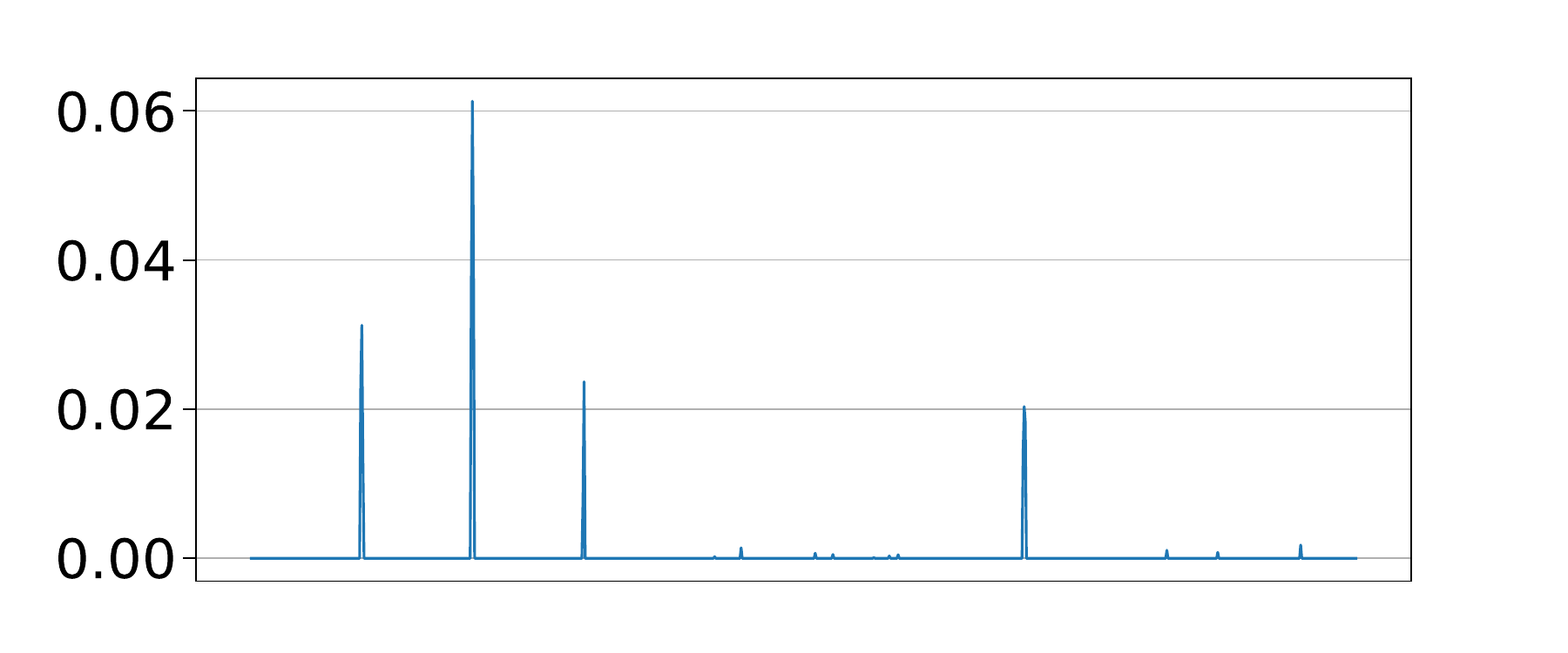}
     \subcaption{Reconstruction with large kernel}
\end{subfigure}
\begin{subfigure}{0.43\linewidth}
     \includegraphics[width=\linewidth]{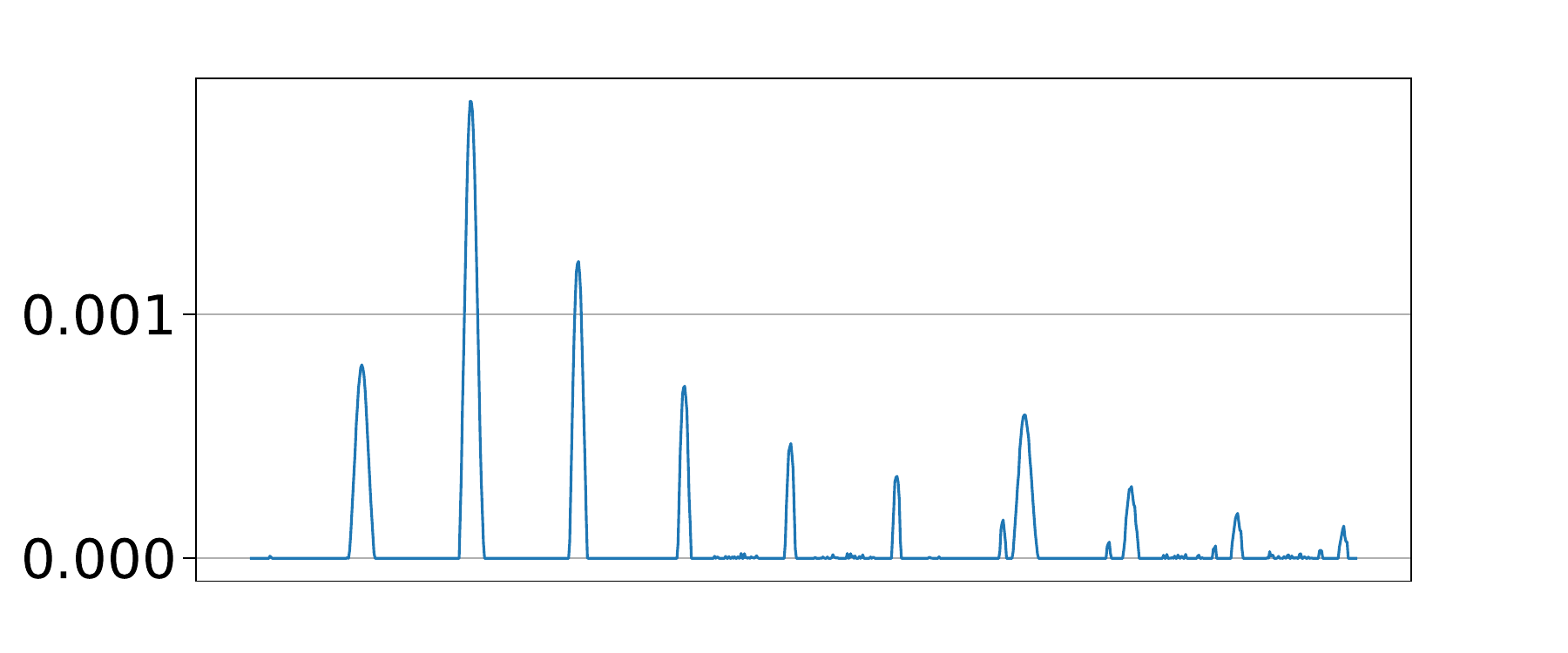}
     \subcaption{Reconstruction with small kernel.}
\end{subfigure}
\begin{subfigure}{0.43\linewidth}
     \includegraphics[width=\linewidth]{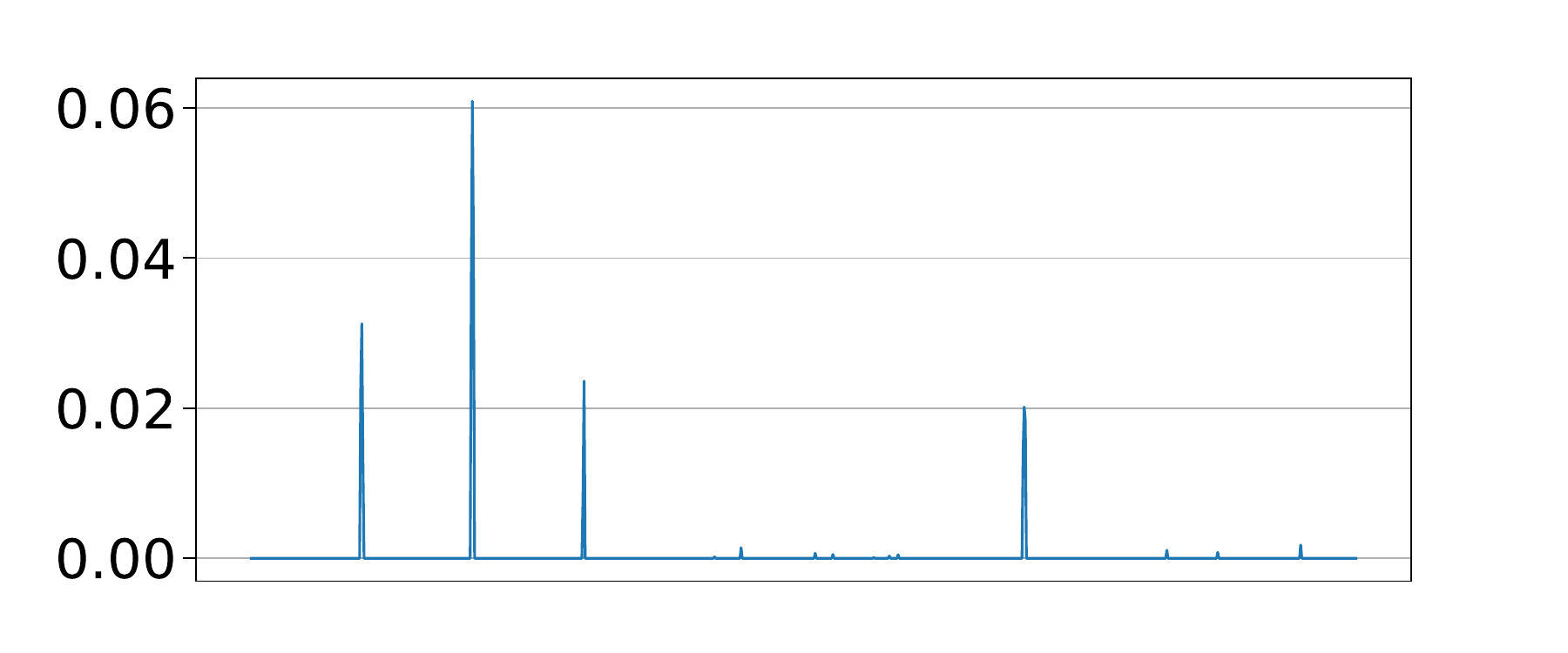}
     \subcaption{Reconstruction with interpolation of $\tilde{h}$.}
\end{subfigure}
\begin{subfigure}{0.43\linewidth}
     \includegraphics[width=\linewidth]{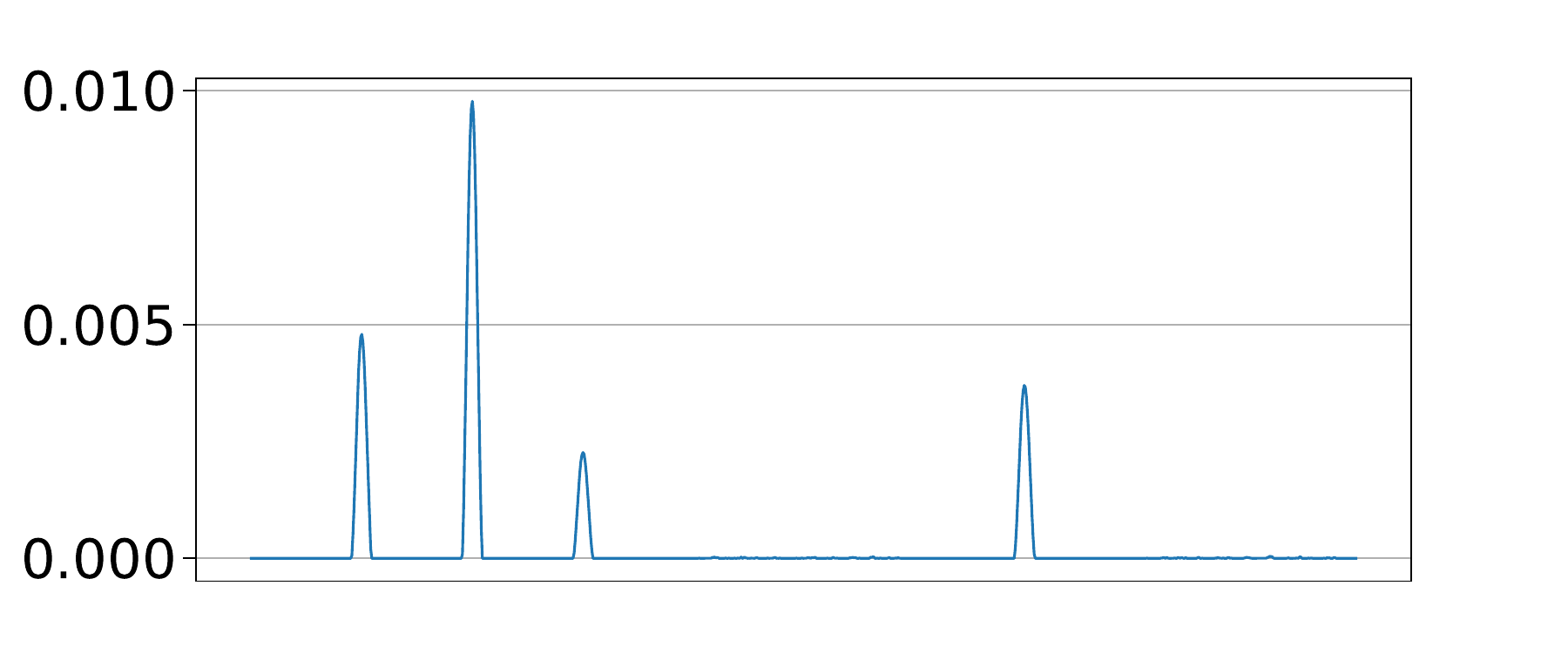}
          \subcaption{Reconstruction with interpolation of $\tilde{g}$.}
\end{subfigure}

\begin{subfigure}{0.43\linewidth}
     \includegraphics[width=\linewidth]{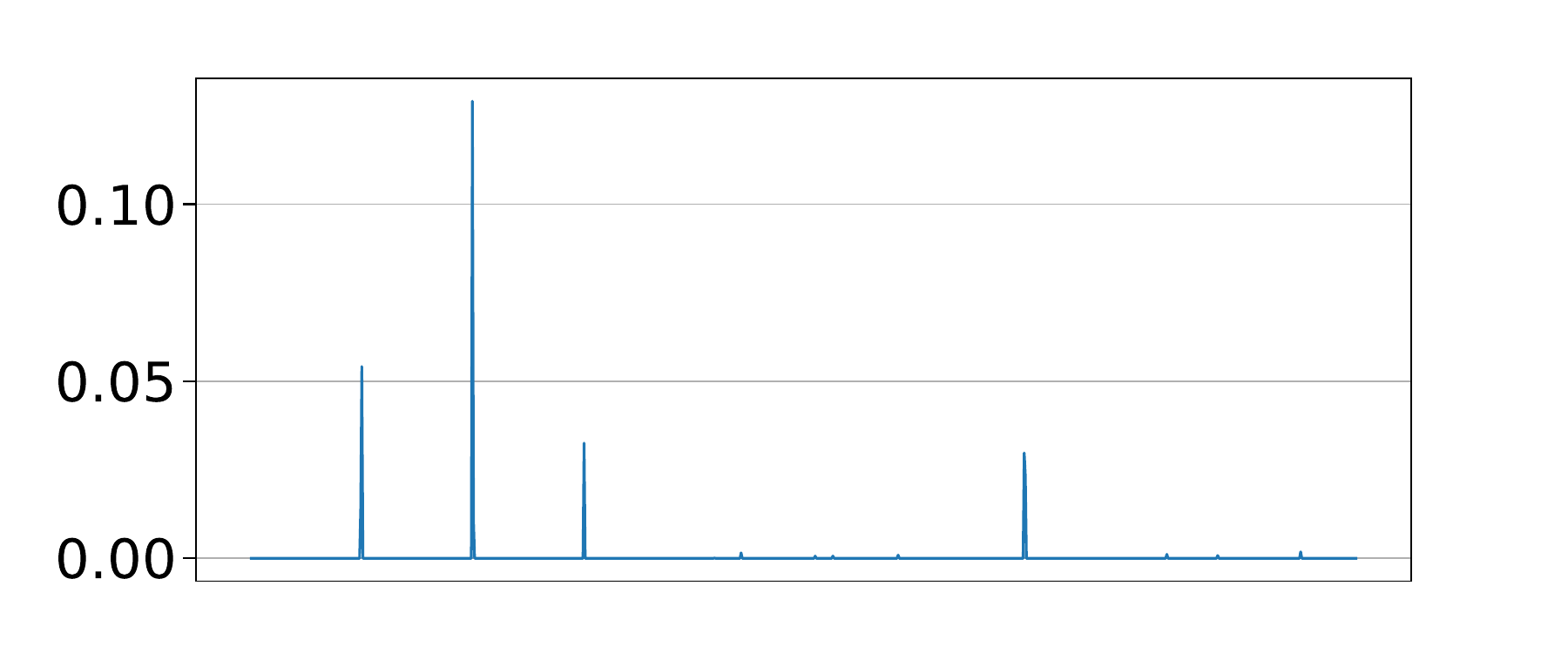}
     \subcaption{Reconstruction with spectral interpolation of $\tilde{h}$.}
\end{subfigure}
\begin{subfigure}{0.43\linewidth}
     \includegraphics[width=\linewidth]{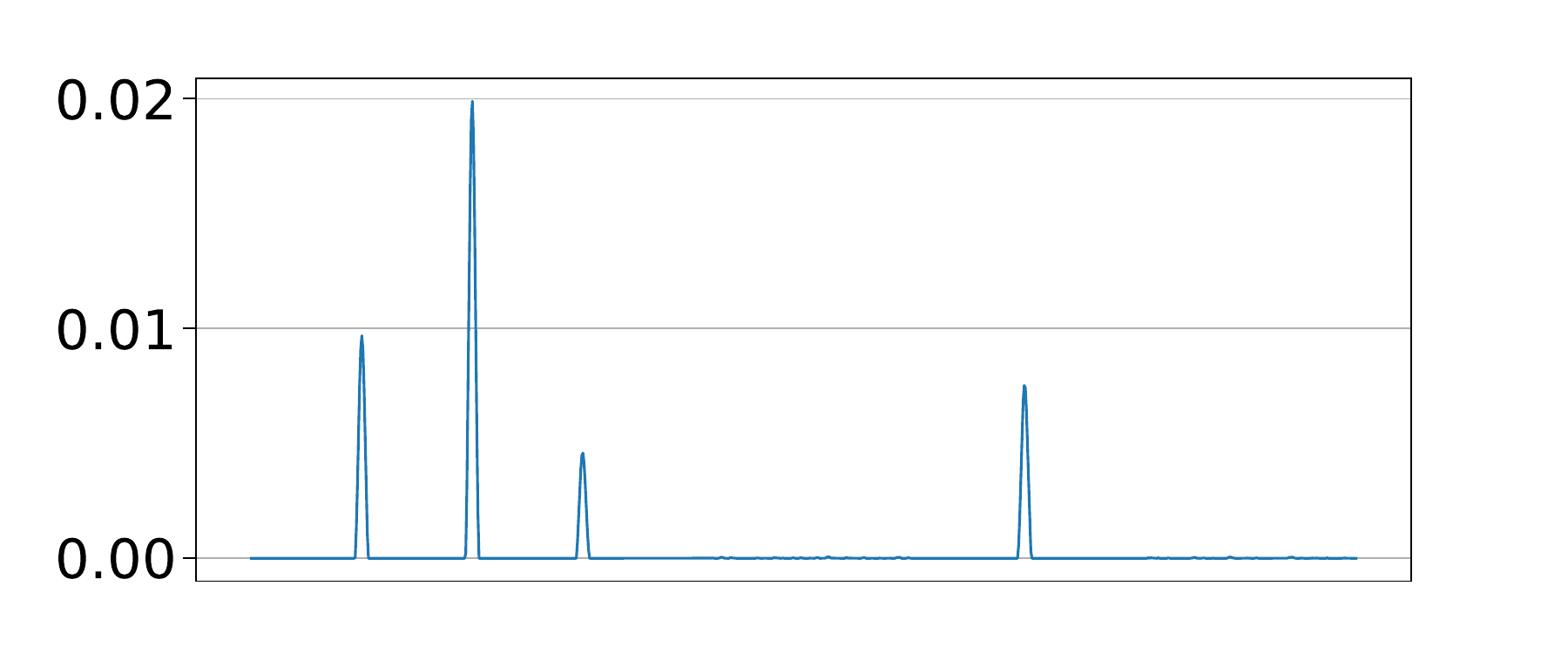}
     \subcaption{Reconstruction with spectral interpolation of $\tilde{g}$.}
\end{subfigure}
    \caption{The results with different kernel.  The interpolation of $\tilde{h}$ reconstructs the sparsity of the ground truth signal, while the interpolation of $\tilde{g}$ leads to a more spread-out signal.}
    \label{aa_fig:toy-results}
\end{figure}
To demonstrate the five different choice of handling different resolutions within a CNN motivated by our projected gradient descent algorithm, we simulate the continuous $f$, $u$ and $k$ using a discretization of $n=64000$ points. We then subsample $f$ by a factor of $16$ and add noise to obtain $\tilde{f} \in \mathbb{R}^{4000}$. We create Gaussian kernels with $\sigma = 300$ at a resolution suitable for reconstructing ($n=1000$) and use the five different approaches described above (use low-resolution kernel, use linear interpolation of $\tilde{g}$, use linear interpolation of $\tilde{h}$, use spectral variant of \eqref{aa_eq:gradProj_net1}, use spectral variant of \eqref{aa_eq:gradProj_net2}) and show the reconstructions in Fig.~\ref{aa_fig:toy-results}. Interpolating the smaller kernel, defined as $\tilde{h} = -\tau (k * k)$, yields results comparable to those obtained with the larger kernel. In this case, the sparsity of the ground truth signal can be effectively reconstructed. In contrast, interpolation using $\tilde{g}$ fails to recover this sparsity; the output primarily reflects the smooth intensity profile of the Gaussian kernel. Similar results were observed for the spectral variants. These observations suggest that incorporating a skip connection from the input to the corresponding layer, as implemented in Equation~\ref{aa_eq:gradProj_net2}, is a more effective modeling strategy.

Overall, the approach of interpolating kernels appears to be a promising strategy. In the following, we shift our focus to a more practical setting, where we aim to solve limited-angle CT reconstruction problems by \textit{learning} suitable convolution kernels for the reconstruction, rather than deriving them analytically through a model-based approach. 

\subsection{Limited angle CT post-processing}
Inspired by the results on the one-dimensional toy example we took a closer look into the behavior of different neural network architectures on imaging data based on a common real world application, namely limited angle computerized X-ray tomography.
This imaging modality is commonly encountered in medical and scientific applications. While for full-angle tomography data, there exists a unique analytical solution (at least in the theoretical case of perfect measurements), limited angle tomography is an underdetermined inverse problem, leading to so-called streaking artifacts in naive reconstructions. To remove these, common approaches use various combinations of variational or data-driven approaches to pre-process, reconstruct and post-process limited angle CT measurements.

As we intend to stay close to a real-world application while also staying as simple as possible regarding theoretical analysis and practical implementation we restrict our experiments to post-processing. We therefore consider naive reconstructions generated from measurements of different resolutions as input data and the corresponding artifact free reconstructions as target data. Since data collection for such a task is costly we use simulated data.
\subsubsection{Data generation}
\begin{figure}
    \centering
    \includegraphics[width=0.75\linewidth, trim = {0cm 0cm 0cm 0.5cm}, clip]{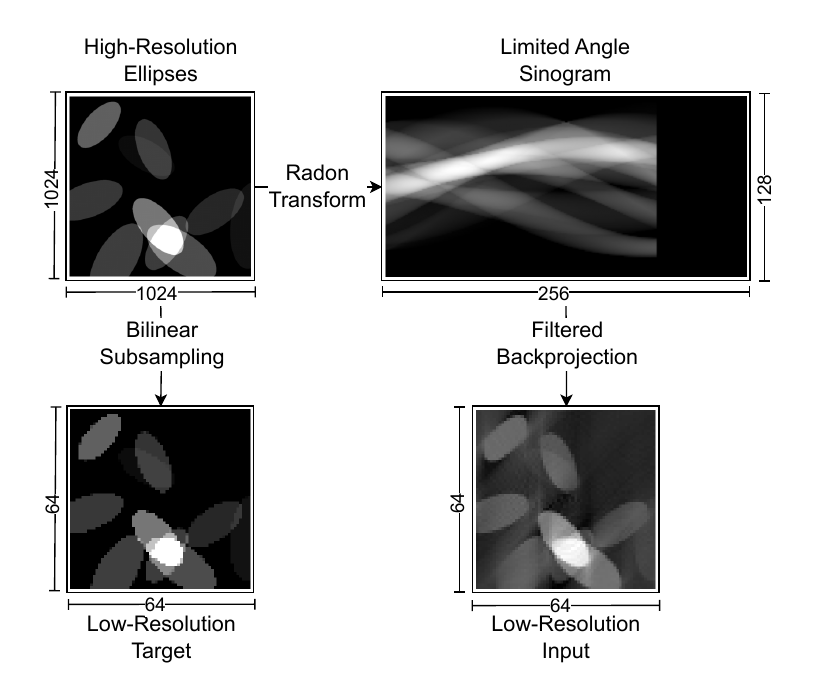}
    \caption{An overview over the data generation process to simulate the limited angle CT data used in our experiments, here exemplary for images of size $64\times64$.}
    \label{aa_fig:dataGeneration}
\end{figure}
Figure \ref{aa_fig:dataGeneration} visualizes our data generation process. We generate high-resolution ($1024\times1024$ pixels) groundtruth images consisting of ellipses with random sizes, positions, rotations and densities. These high-resolution images were then transformed using the Radon transform with angles in $[0,\frac{3}{4}\pi]$, resulting in an incomplete but high-resolution sinogram missing approximately $25\%$ of angles. As a sinogram size (determined by the amount of angles and offsets) with the (vectorized) dimension as the groundtruth proved insufficient for reconstructing images in our framework we empirically determined a sufficient size as $2N$ angles and $4N$ offsets, where N is the width/height of the final image. Finally, we generated each final data sample by constructing the target by bilinear subsampling of the high-resolution groundtruth image while constructing the input by performing a ram-lak filtered backprojection onto the desired input size. As the focus of our work lies on the behavior across different resolutions we chose to not include any noise, resulting in an easier reconstruction problem and thus hopefully easier training of the different models.

We generated 3 datasets with different resolutions ($64\times64$, $128\times128$ or $256\times256$), each having $10000$ samples, split between training-, validation- and test-data ($64\%/16\%/20\%$). To guarantee fairness we used identical training data across all training runs.

\subsubsection{Architectures}
We analyze six different neural network architectures:
\par
\textbf{Classical U-Net} Visualized in Figure \ref{aa_fig:UNetArchi} we implemented this popular image-to-image architecture as described in section \ref{aa_sec:discrete} and \cite{ronneberger2015unet_AA}, with a minor modification: The convolutional layers in the original U-Net do not perform zero-padding, which causes the outputs to be slightly smaller than the inputs. In our implementation we use zero-padding as this generates outputs that have the same size as the inputs and thus better fits the idea of post-processing. As in the original work, we use a depth of 4, meaning we operate on 5 different resolutions. Every downsampling step doubles the amount of channels while halving the image resolution, which is exactly inverted in the upsampling part. An initial convolution extends the amount of channels to 64, which is inverted at the end by a final convolution. This U-Net structure is used as reference for all other architectures, with the goal of staying as close as possible.
\par
\textbf{Differential U-Net} The differential U-Net has the same structure as our U-Net implementation, with the only difference being the use of differential convolution filters, as described in section \ref{aa_sec:differential}. Based on our discussions in Section \ref{aa_sec:differential}, we do not expect a resolution-invariant behavior for such an architecture, as we neither include a thresholding layer to cut off exploding gradients, nor adapt the number of layers to the input resolution.
\par
\textbf{Spectral U-Net} A U-Net inspired by Fourier neural operators, as introduced in \cite{li2021fourier_AA}, described in section \ref{aa_subsec:spectralUNet}. Structurally our implementation again follows the U-Net as described above, as we also use a depth of $4$ and extend the number of channels to $64$ in the first layer. As described in Definition \ref{aa_def:spectralunet}, we do not further increase the number of channels in the contracting path and implement the skip connections as additions instead of concatenations. This makes it possible to add residual connections in each layer, as we have argued for in section \ref{aa_subsec:spectralUNet}. We further choose the kernel sizes $m_{\text{in}}= m_4 = 256$, $m_3 = 128$, $m_2 = 64$, $m_1 = 32$, $m_0 = 16$. 
\par
\textbf{Spectral resizing U-Net} The spectral resizing U-Net is similar to the basic spectral U-Net described above. The main difference is the inclusion of up- and downsampling operations in the form of trigonometric interpolation.
\par
\textbf{CNO} We use the original CNO implementation\footnote{The code for the CNO architecture is available at \\
\url{https://github.com/camlab-ethz/ConvolutionalNeuralOperator/tree/main/CNO2d_original_version}.} as described in \cite{raonic20cno_AA}, with hyperparameters such as depth, amount of initial channels etc. chosen in a way to most closely resemble the classical U-Net.
\par
\textbf{U-NO} We use the original U-NO implementation\footnote{The code for the U-NO architecture is available at\\ \url{https://github.com/neuraloperator/neuraloperator/blob/main/neuralop/models/uno.py}.} as described in \cite{rahman2023uno_AA}, with hyperparameters (number of channels, skip connections, kernel sizes) chosen such that it matches the spectral resizing U-Net.

\subsubsection{Results}
Even if a network contains resolution-agnostic building blocks, we can only assess empirically if it is a suitable model to solve our problem. We consider the following questions:
 \begin{enumerate}
     \item[(Q1)] Performance for fixed input resolution: How well does the network perform on a single resolution when it was trained on the same resolution?
     \item[(Q2)] Performance for varying input resolutions: How well does the network perform on varying resolutions when it was trained on the same varying resolutions?
     \item[(Q3)] Generalization to unseen input resolutions: How well does the network perform on resolutions that were not seen during training? Does the performance improve if the range of training resolutions increases?
     \item[(Q4)] Performance for resized input resolutions: How well does a network trained on a single resolution perform on images of varying resolutions resized to the resolution the network was trained on?
\end{enumerate}
We evaluate each architecture by training 10 instances and reporting the mean performance. The following figures show the performance of each architecture in terms of MSE, trained either on 6400 images of $64\times64$, $128\times128$ or $256\times256$ pixels, or an equal mixture of all three resolutions with the same total amount of images.

We would like to emphasize that we did not perform an extensive search for the various hyperparameters, which could be a source for further improvement. Since a careful selection of hyperparameters will most probably lead to a good performance on a fixed input resolution, we do not put much weight to the first question (Q1). Instead, we use the performance obtained on a fixed resolution as a reference value to answer the remaining questions. We further note that since the input resolution of the CNO is tied to the fixed training resolution, we only consider questions (Q1) and (Q4) for this architecture. 
\begin{figure}
    \centering
    \begin{subfigure}{0.47\linewidth}
        \includegraphics[width=\linewidth]{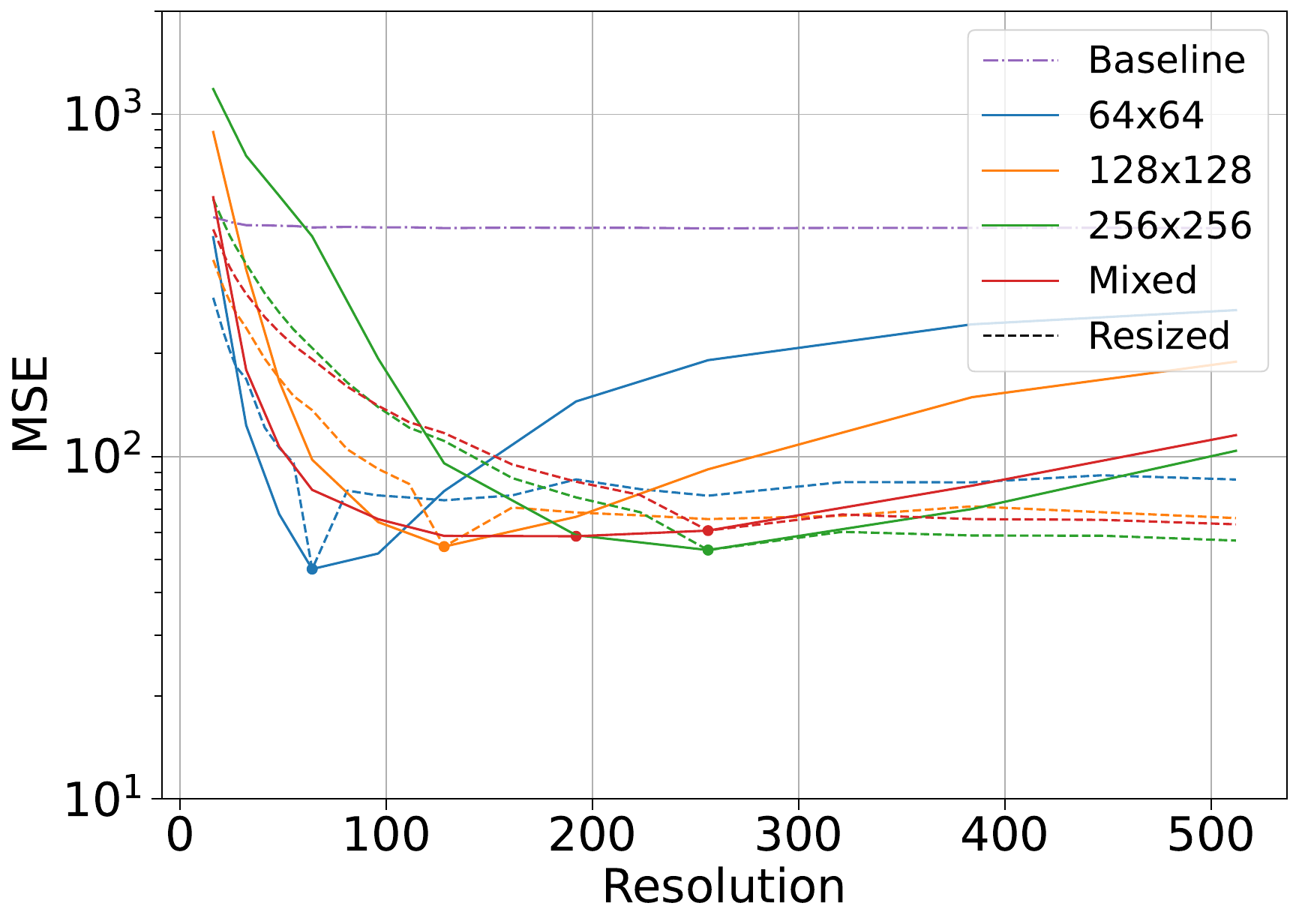}
        \subcaption{Classical U-Net\\\quad}
    \end{subfigure}
    \begin{subfigure}{0.47\linewidth}
         \includegraphics[width=\linewidth]{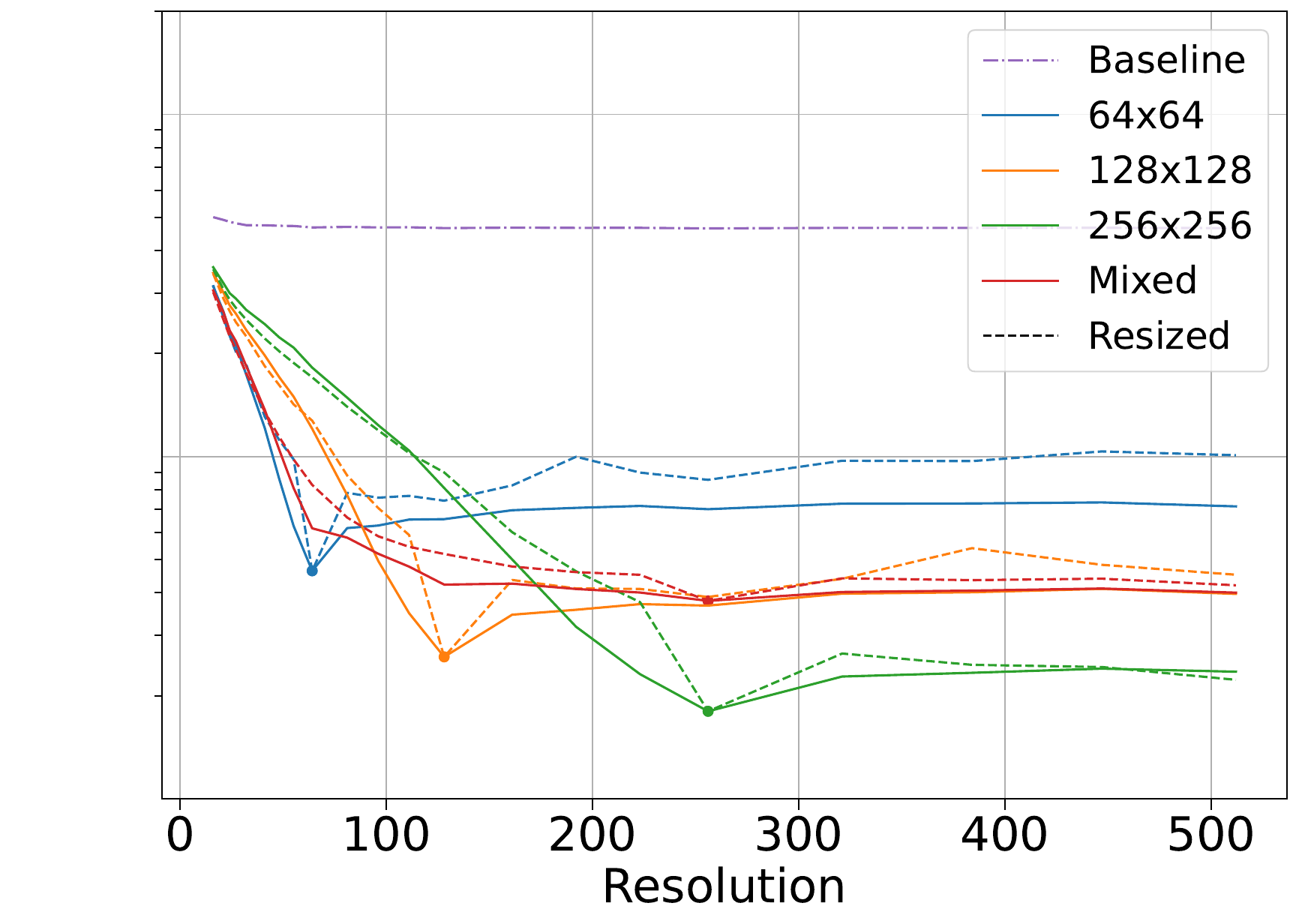}
         \subcaption{Spectral U-Net\\\quad}
    \end{subfigure}
    \begin{subfigure}{0.47\linewidth}
         \includegraphics[width=\linewidth]{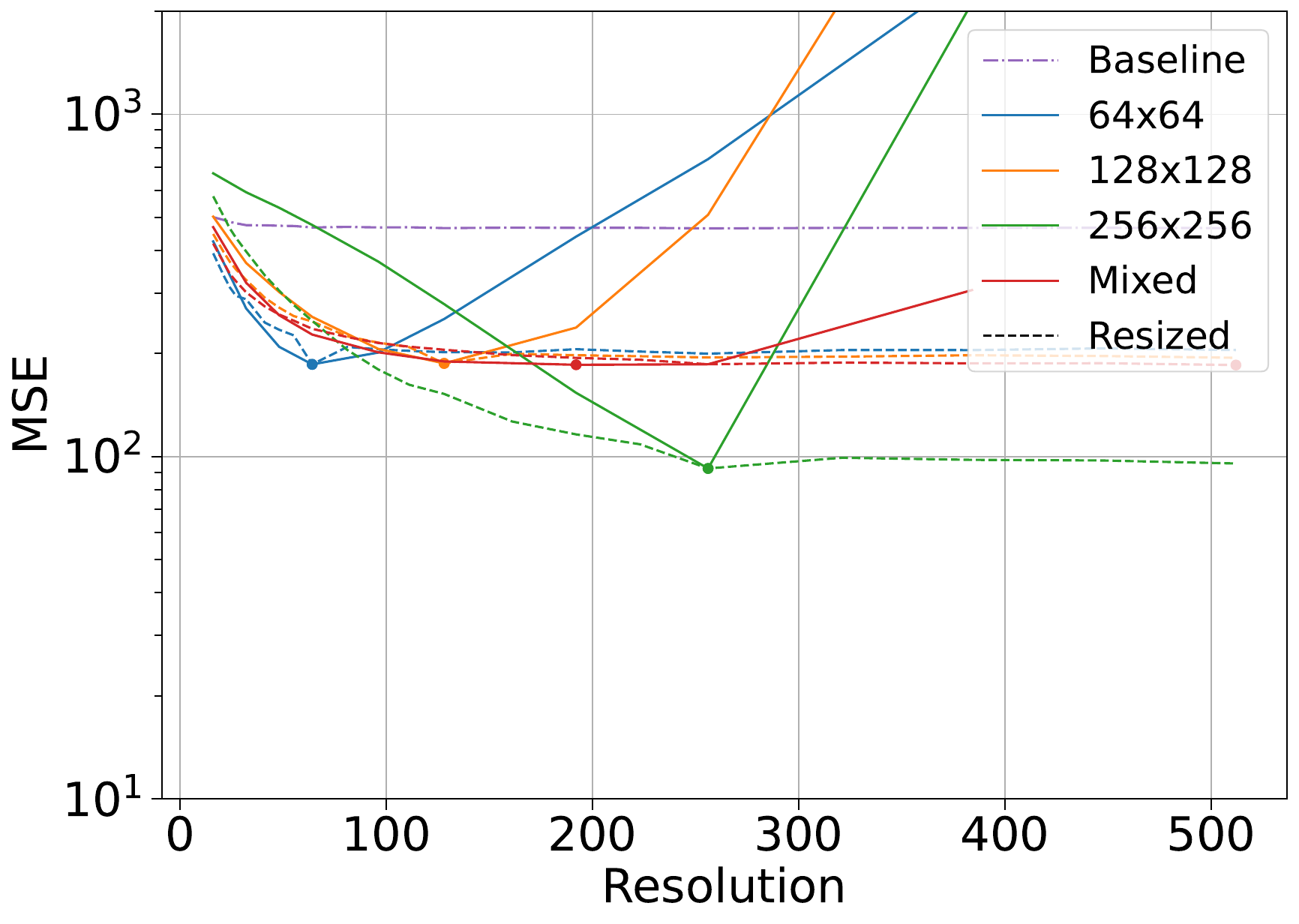}
         \subcaption{Differential U-Net\\\quad}
    \end{subfigure}
    \begin{subfigure}{0.47\linewidth}
        \includegraphics[width=\linewidth]{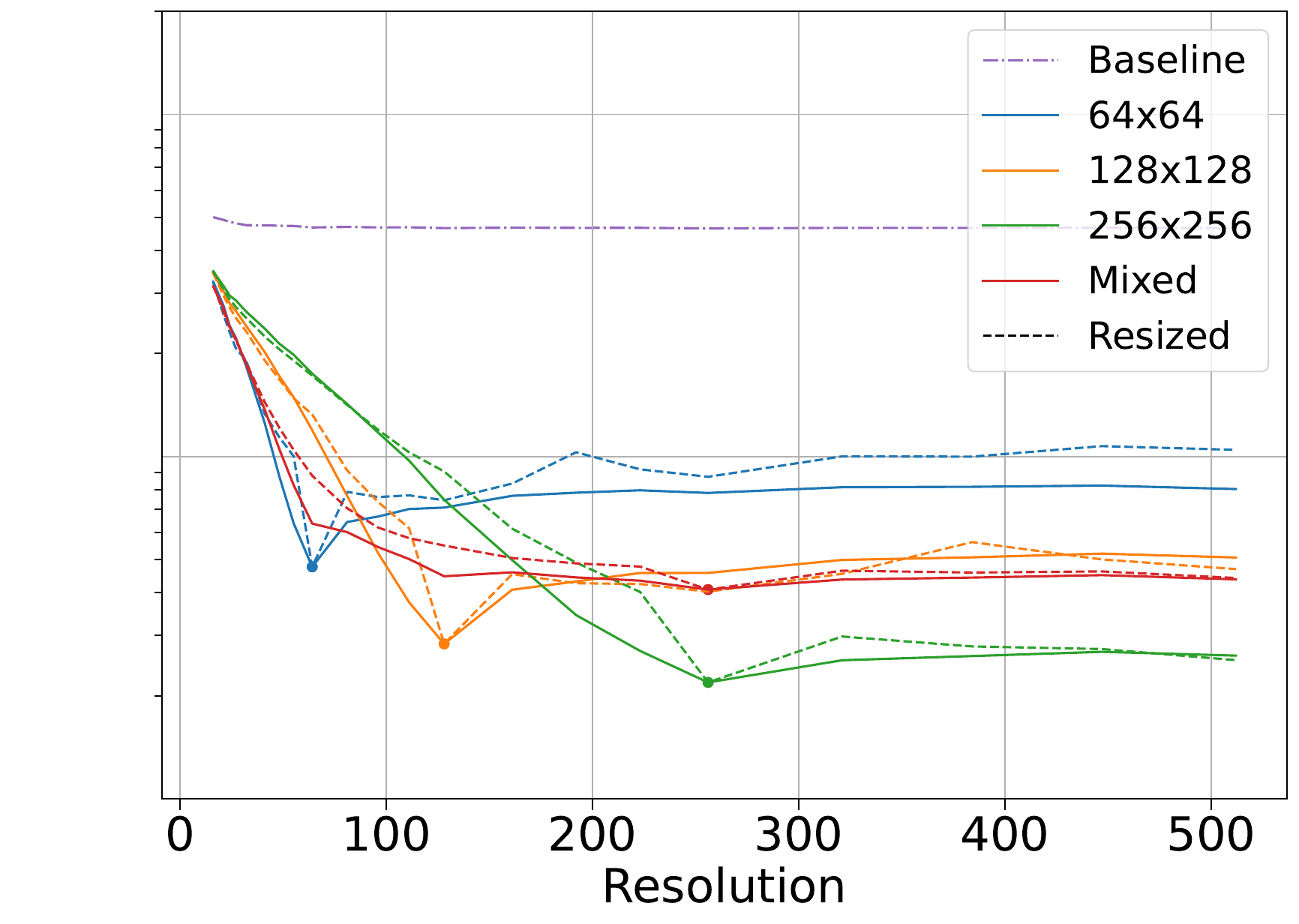}
        \subcaption{Spectral resizing U-Net\\\quad}
    \end{subfigure}
    \begin{subfigure}{0.47\linewidth}
         \includegraphics[width=\linewidth]{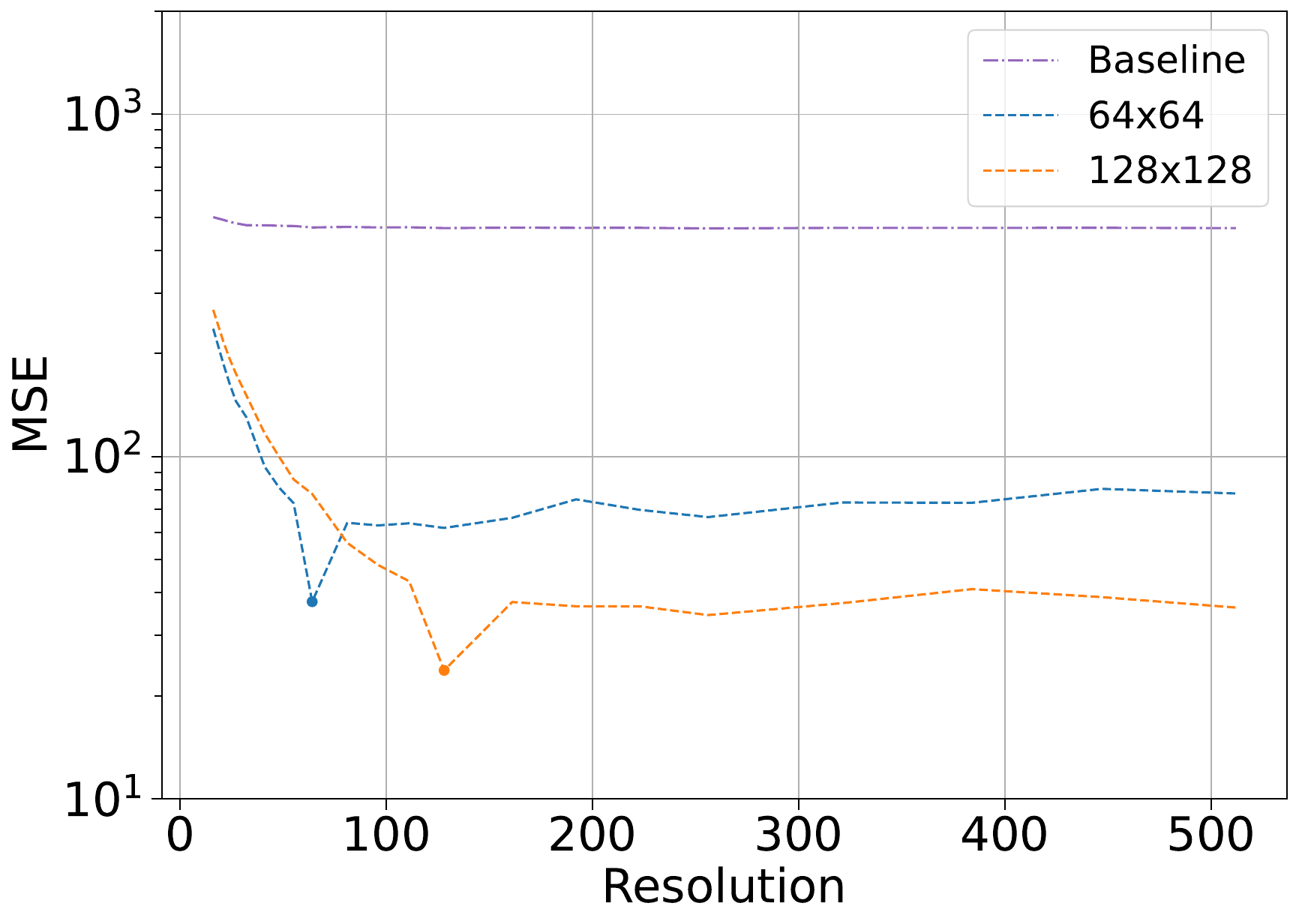}
         \subcaption{CNO}
    \end{subfigure}
    \begin{subfigure}{0.47\linewidth}
         \includegraphics[width=\linewidth]{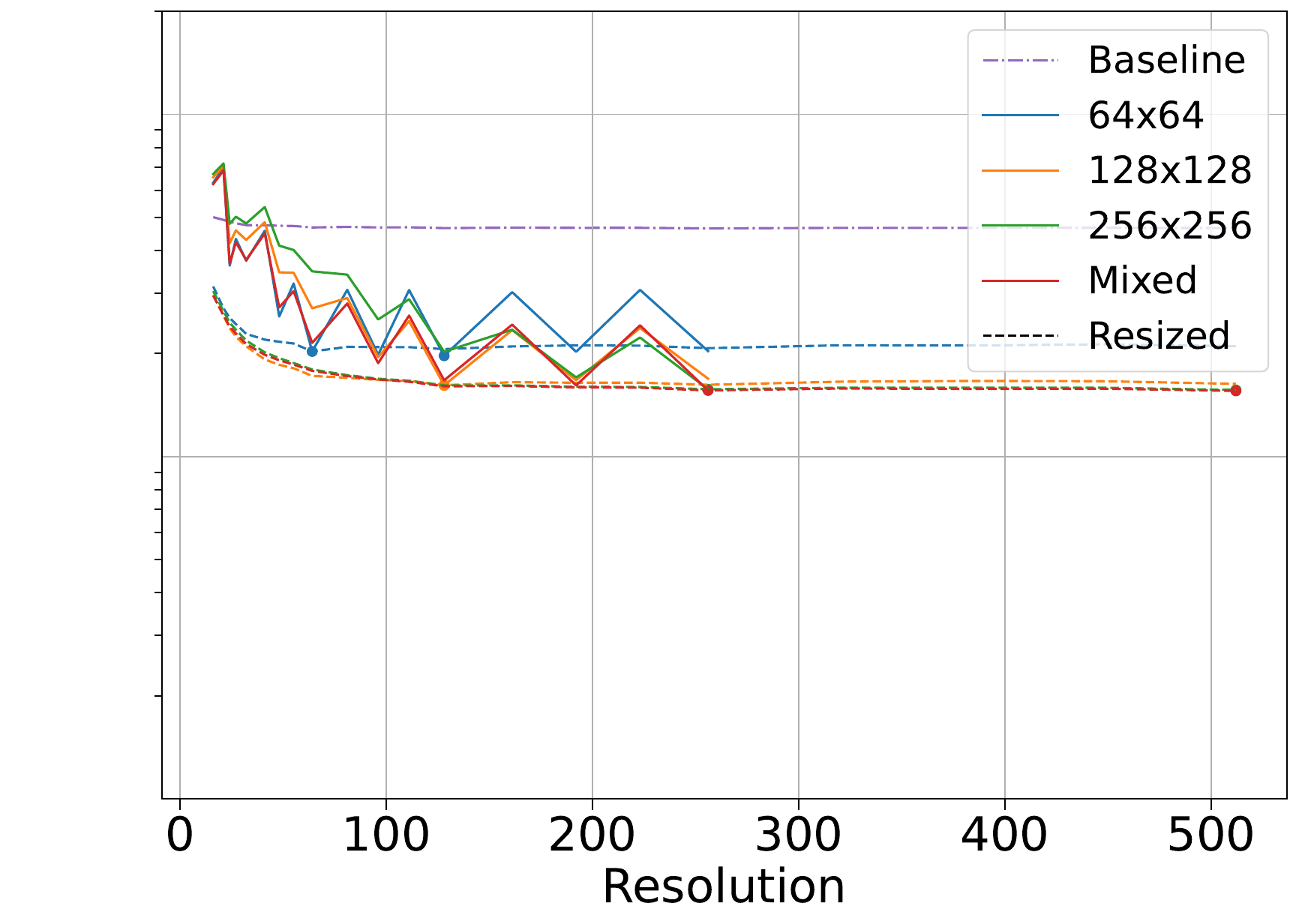}
         \subcaption{U-NO}
    \end{subfigure}
    \caption{Performance comparison for different architectures on different resolutions, with dots marking the resolution of minimal MSE. Dashed lines indicate the performance if the test-inputs are resized to the size of the training samples. ``Baseline'' indicates the MSE of the initial reconstruction.}
    \label{aa_fig:results2dMeans}
\end{figure}

Our findings are visualized in Figures \ref{aa_fig:results2dMeans} to \ref{aa_fig:results2dVarsR}, where Figure \ref{aa_fig:results2dMeans} shows the mean performance and Figures \ref{aa_fig:results2dVars} and \ref{aa_fig:results2dVarsR} give further insights on the performance range.
While the curves showing the performance for models trained on a fixed input resolution (``$64\times64$'', ``$128\times128$'', ``$256\times256$'') give answers to (Q1), the models trained on mixed input resolutions (``mixed'') are related to (Q2). While for these first two questions, we only consider the performance at the respective training resolutions, the plots further shows the performance on unseen resolutions. Comparing these results for models trained on single fixed resolutions to the ones trained on mixed resolutions gives insights about (Q3). Lastly, (Q4) is taken into account with the dashed curves, which show the performance on inputs that have been resized to the training resolution.
In the following we describe our findings for each architecture.
\par\textbf{Classical U-Net} The classical U-Net performs quite well, however we see a large deviation between different runs in Figure \ref{aa_fig:results2dVars}.  Training on mixed data slightly worsens the maximum performance but increases the generalization capabilities of the network. While the models perform best on the resolution they were trained on, they also generalize to a broader range of resolutions than expected. Training on mixed resolutions further increases the performance on unseen low resolutions with a slight performance trade-off on high resolutions. 
Resizing the input data to the training input size (dashed lines) drastically improves generalization. This indicates that the architecture itself is not resolution-invariant.

\par\textbf{Differential U-Net} As clearly depicted in Figure \ref{aa_fig:results2dMeans}, simply substituting classical convolutional layers by differential ones without further adaptations affects the networks' performance very negatively. We attribute the lower performance on the training resolutions to the scaling introduced in differential layers, that seems to make training less stable. Taking into account this in general worse performance, we observe a similar change in performance as for the classical U-Net when training on mixed resolutions. The generalization to unseen resolutions is substantially worse than for all other tested architectures.  

Resizing the inputs to the respective training resolution yields consistent results across nearly all resolutions. From these observations we conclude that the chosen differential architecture is not resolution-invariant.
\par\textbf{Spectral U-Net} The performance obtained with the spectral U-Net is comparable to the best runs of the classical U-Net, while the deviation observed between different runs is much lower for the spectral version (Figure \ref{aa_fig:results2dVars}). All models trained on a single fixed resolution generalize well to data of high resolutions, while performing worse on lower resolutions.
Training on mixed data does not seem to improve the results at all, neither on a single resolution nor regarding the generalization capabilities. The same applies to resizing the input data to the network’s training resolution: In nearly all cases, resizing strictly decreases network performance. This indicates the strong resolution independence of spectrally parameterized networks and resizing only introduces interpolation errors.

\par\textbf{Spectral resizing U-Net} The resizing version of the spectral U-Net behaves nearly identically to the non-resizing version, except for its negligibly worse performance in general. 
The advantage of the resizing version is the decrease in memory consumption, see figure \ref{aa_fig:memReq}.

\par\textbf{CNO} Due to the high memory requirements of the CNO model (see Figure \ref{aa_fig:memReq}, mostly caused by intermediate values in the computation) and our available hardware, we had to restrict our numerical experiments to resolutions smaller than $512\times512$. A further constraint was given by the architecture, which does not allow for training (or testing) with mixed resolutions, we therefore only compare the resizing variants. For the remaining cases the networks performed similar to the spectral U-Net variants, producing slightly better results.

\par\textbf{U-NO} As this model also requires high memory (Figure \ref{aa_fig:memReq}), we needed to restrict our tests to resolutions smaller or equal to $256\times256$. Compared to other models, U-NO did not achieve competitive results on any resolution, but we attribute this to our choice of hyperparameters. From Figure \ref{aa_fig:results2dVars} we see that the  performance is almost identical in all 10 runs. Training on mixed resolutions does not seem to change the models performance. For unseen resolutions, we observe a strong zig-zag pattern between good peformance on even resolutions and bad performance on odd resolutions. The reason for this could lie in the used implementation of the spectral convolution, that in the case of real tensors of even size differs from the spectral convolution we use for the spectral (resizing) U-Net (cf. \cite{Kabri2023FNO_AA}). 

\begin{figure}
    \centering
    \begin{subfigure}{0.47\linewidth}
        \includegraphics[width=\linewidth]{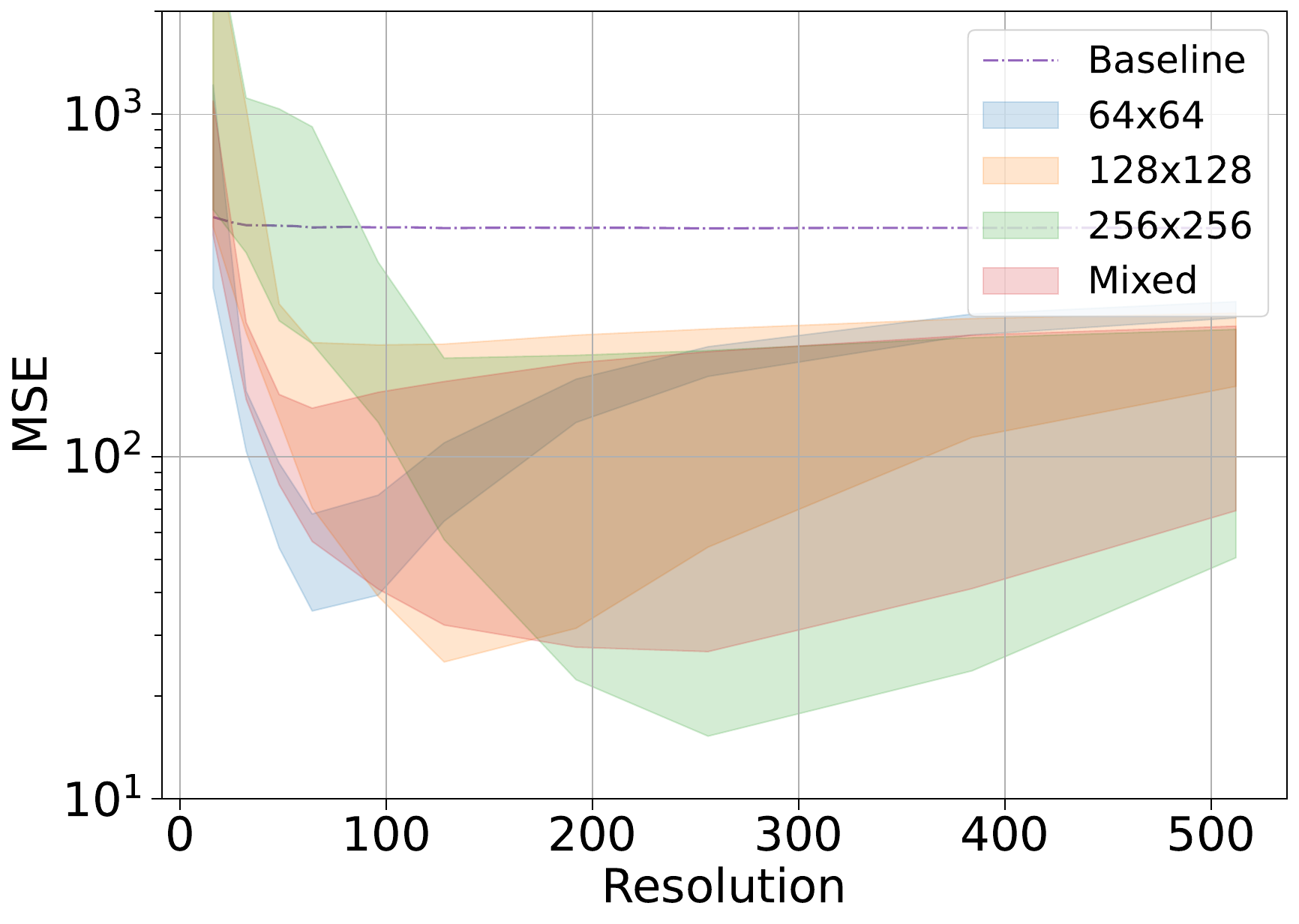}
        \subcaption{U-Net\\\quad}
    \end{subfigure}
    \begin{subfigure}{0.47\linewidth}
         \includegraphics[width=\linewidth]{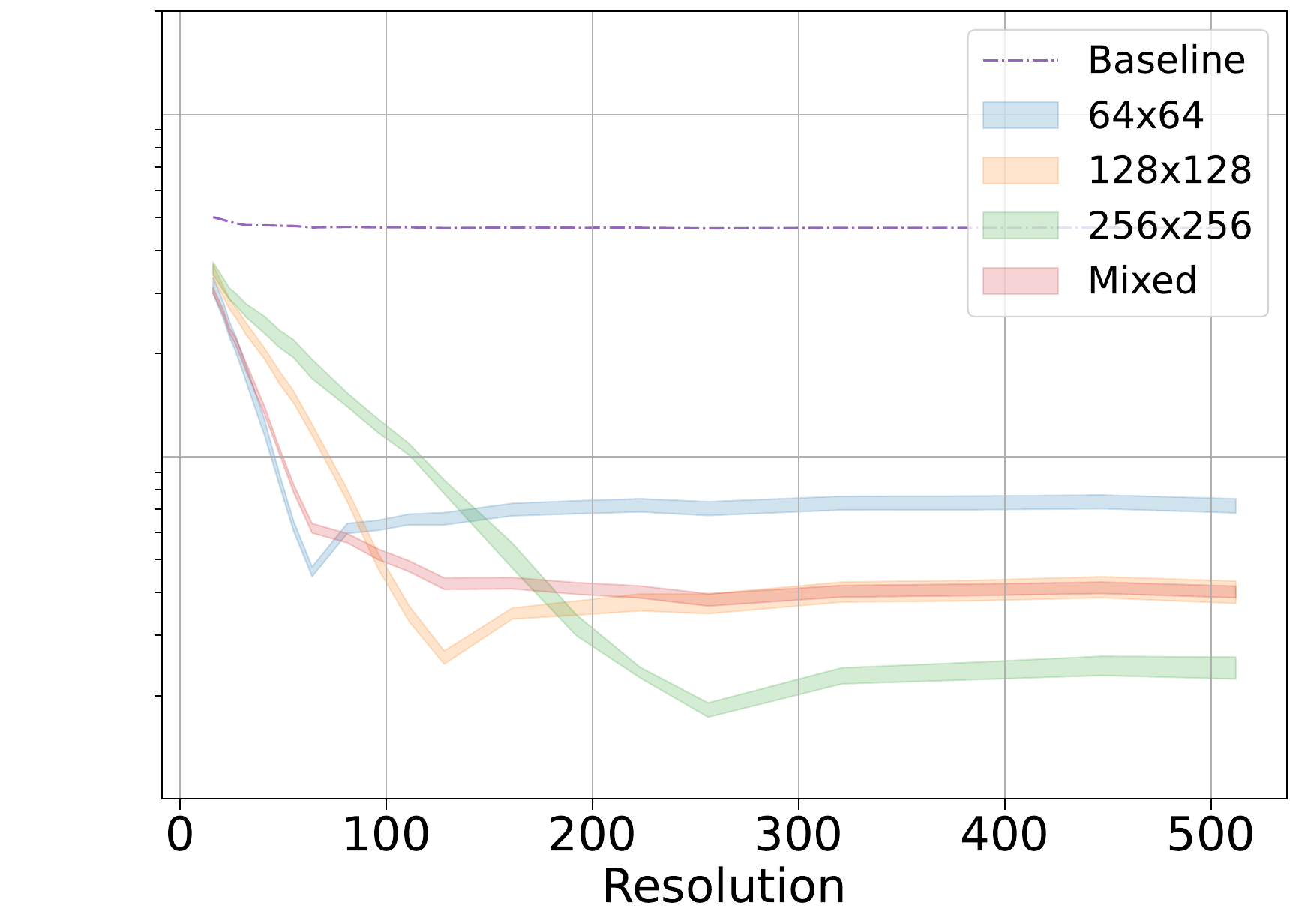}
         \subcaption{Spectral U-Net\\\quad}
    \end{subfigure}
    \begin{subfigure}{0.47\linewidth}
         \includegraphics[width=\linewidth]{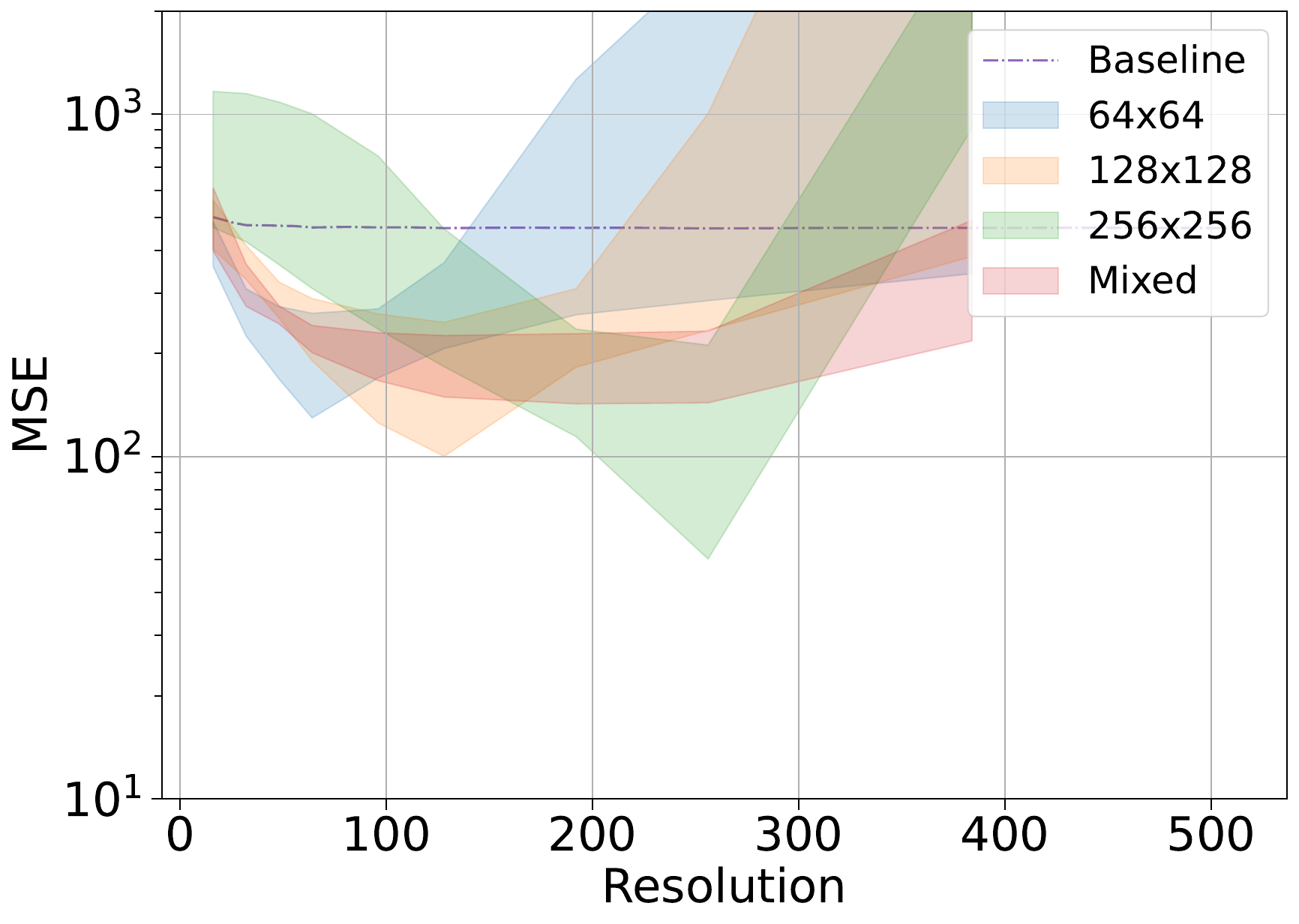}
         \subcaption{Differential U-Net\\\quad}
    \end{subfigure}
    \begin{subfigure}{0.47\linewidth}
        \includegraphics[width=\linewidth]{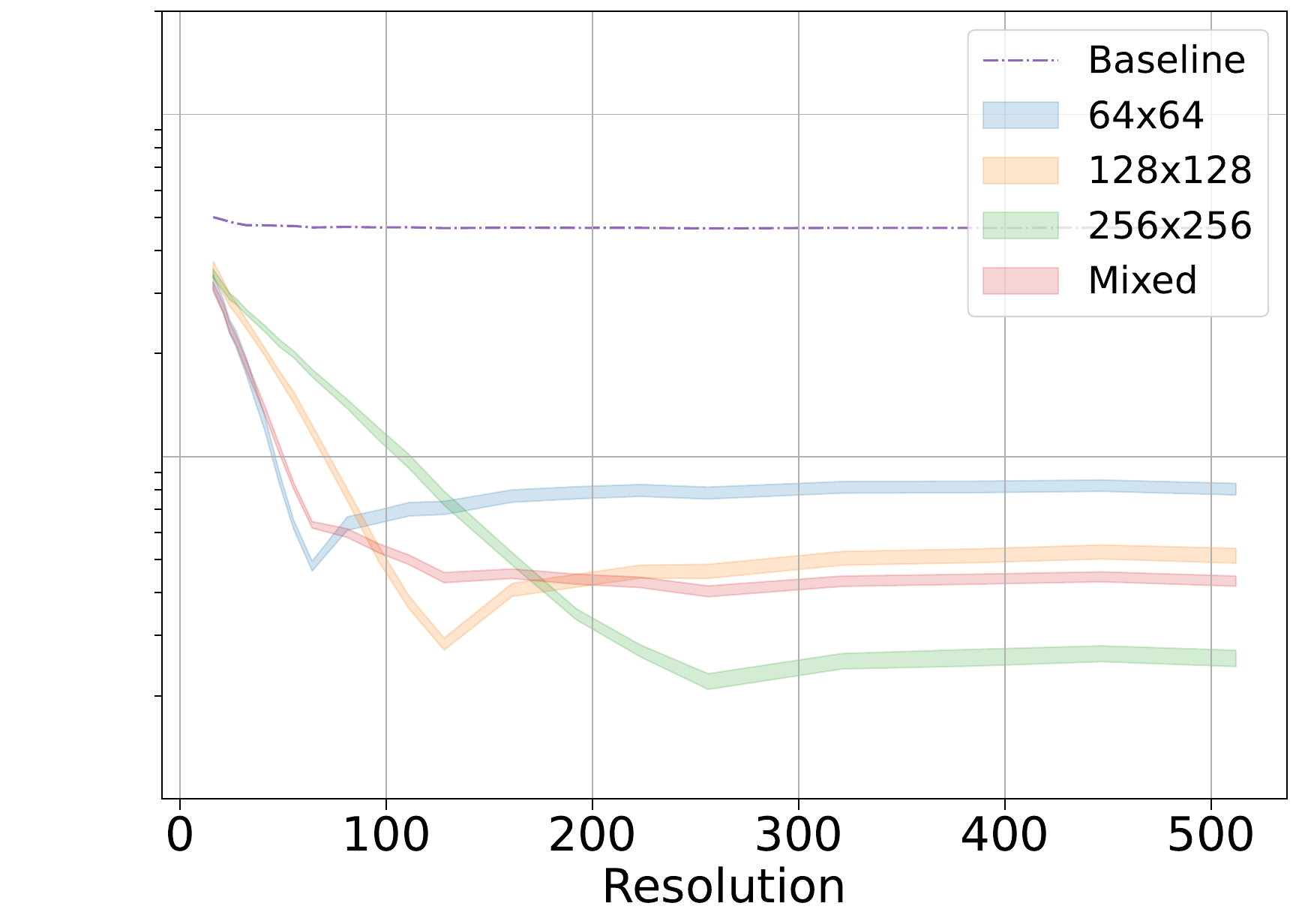}
        \subcaption{Spectral resizing U-Net\\\quad}
    \end{subfigure}
    \begin{subfigure}{0.47\linewidth}
         \includegraphics[width=\linewidth]{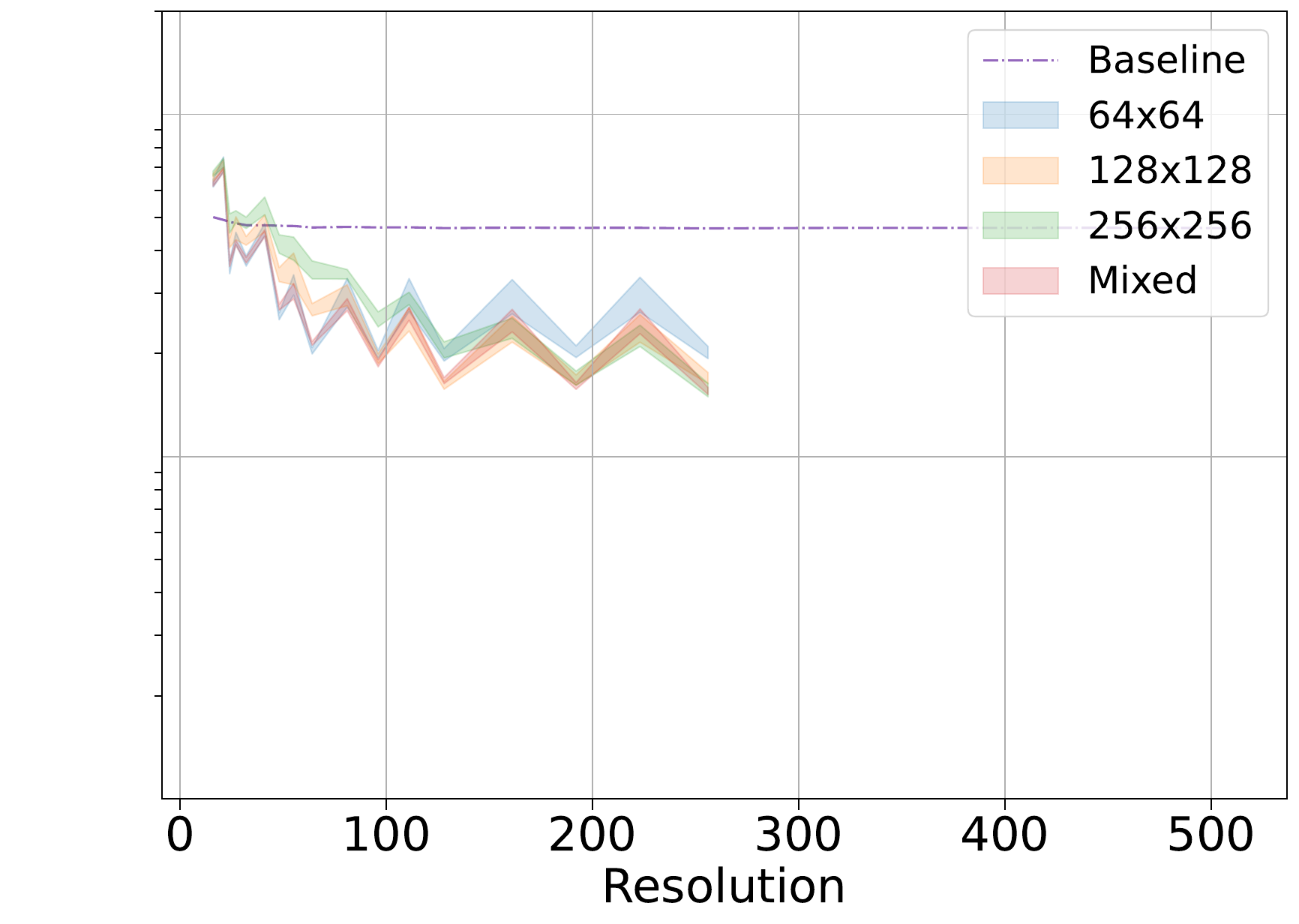}
         \subcaption{U-NO}
    \end{subfigure}
    \caption{Ranges in performance of different architectures on different resolutions. ``Baseline'' indicates the MSE of the initial reconstruction.}
    \label{aa_fig:results2dVars}
\end{figure}

\begin{figure}
    \centering
    \begin{subfigure}{0.47\linewidth}
        \includegraphics[width=\linewidth]{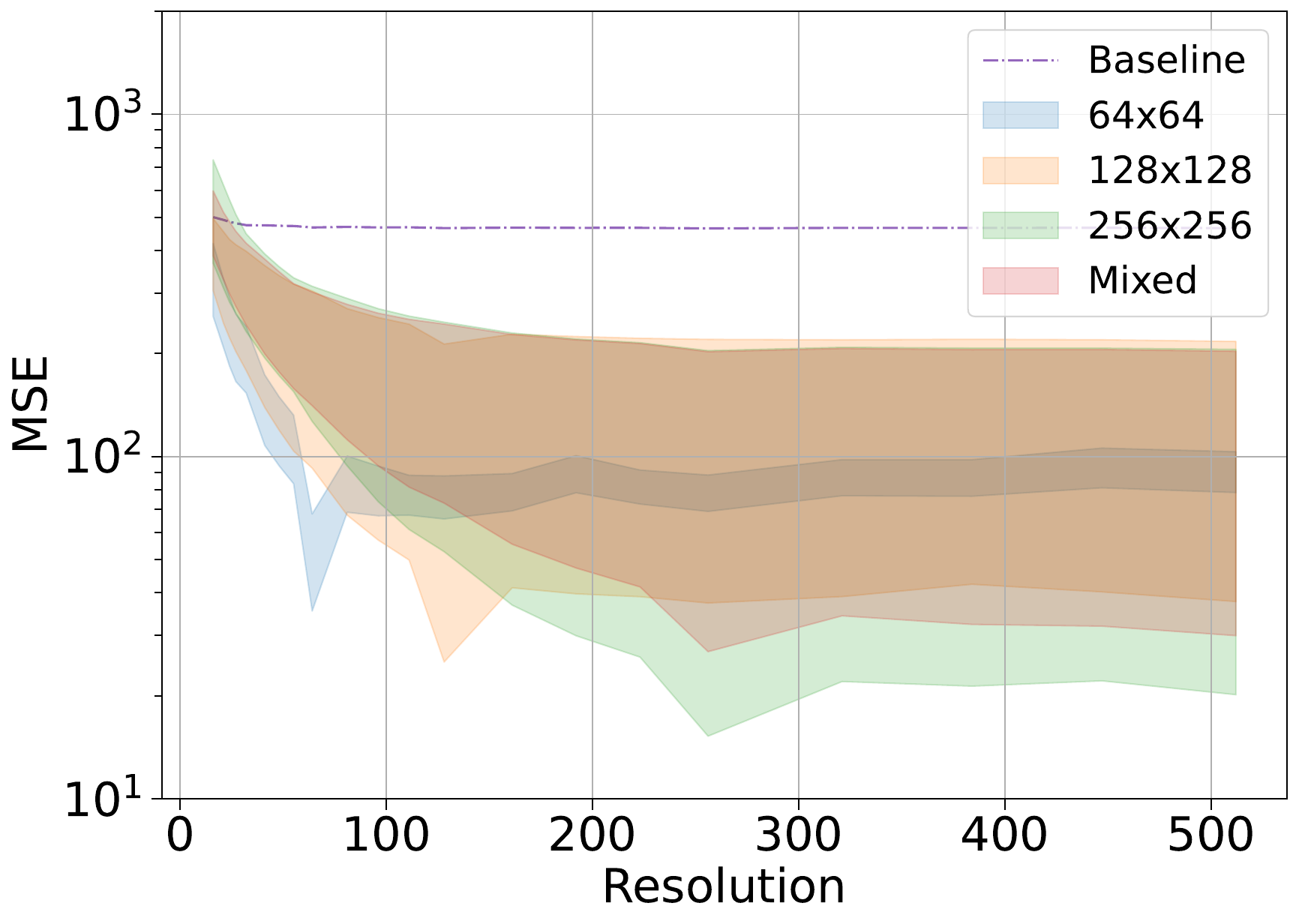}
        \subcaption{U-Net\\\quad}
    \end{subfigure}
    \begin{subfigure}{0.47\linewidth}
         \includegraphics[width=\linewidth]{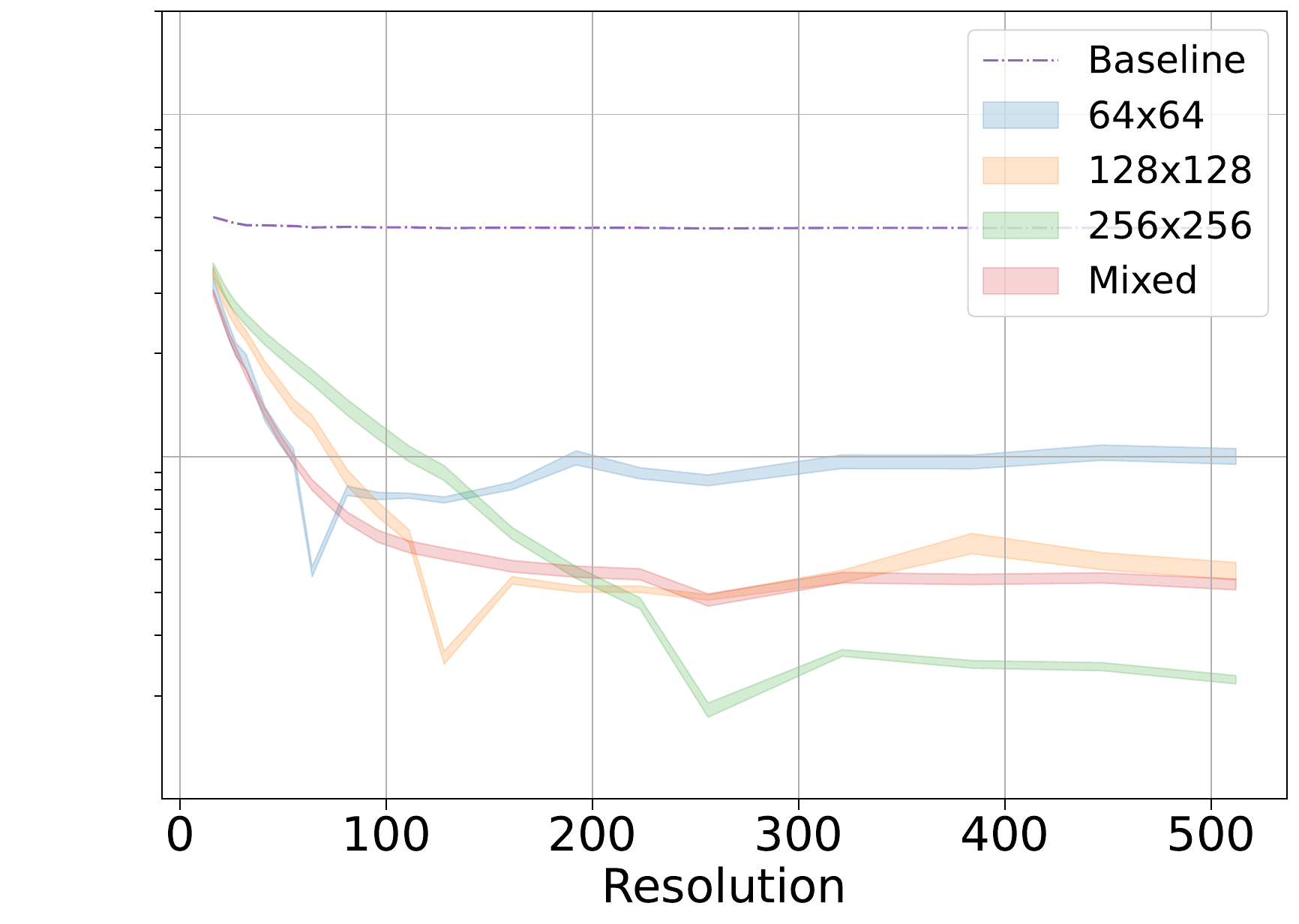}
         \subcaption{Spectral U-Net\\\quad}
    \end{subfigure}
    \begin{subfigure}{0.47\linewidth}
         \includegraphics[width=\linewidth]{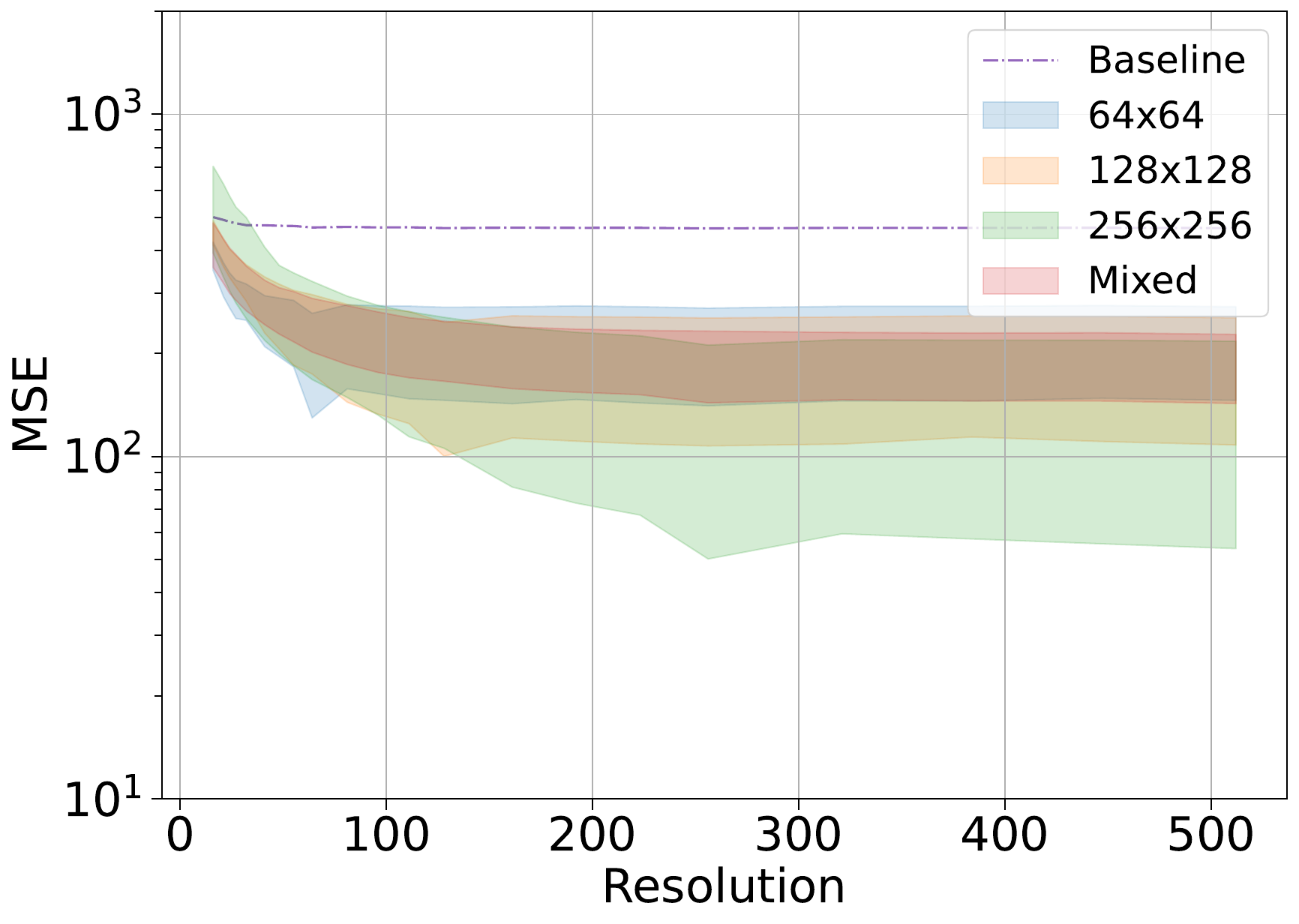}
         \subcaption{Differential U-Net\\\quad}
    \end{subfigure}
    \begin{subfigure}{0.47\linewidth}
        \includegraphics[width=\linewidth]{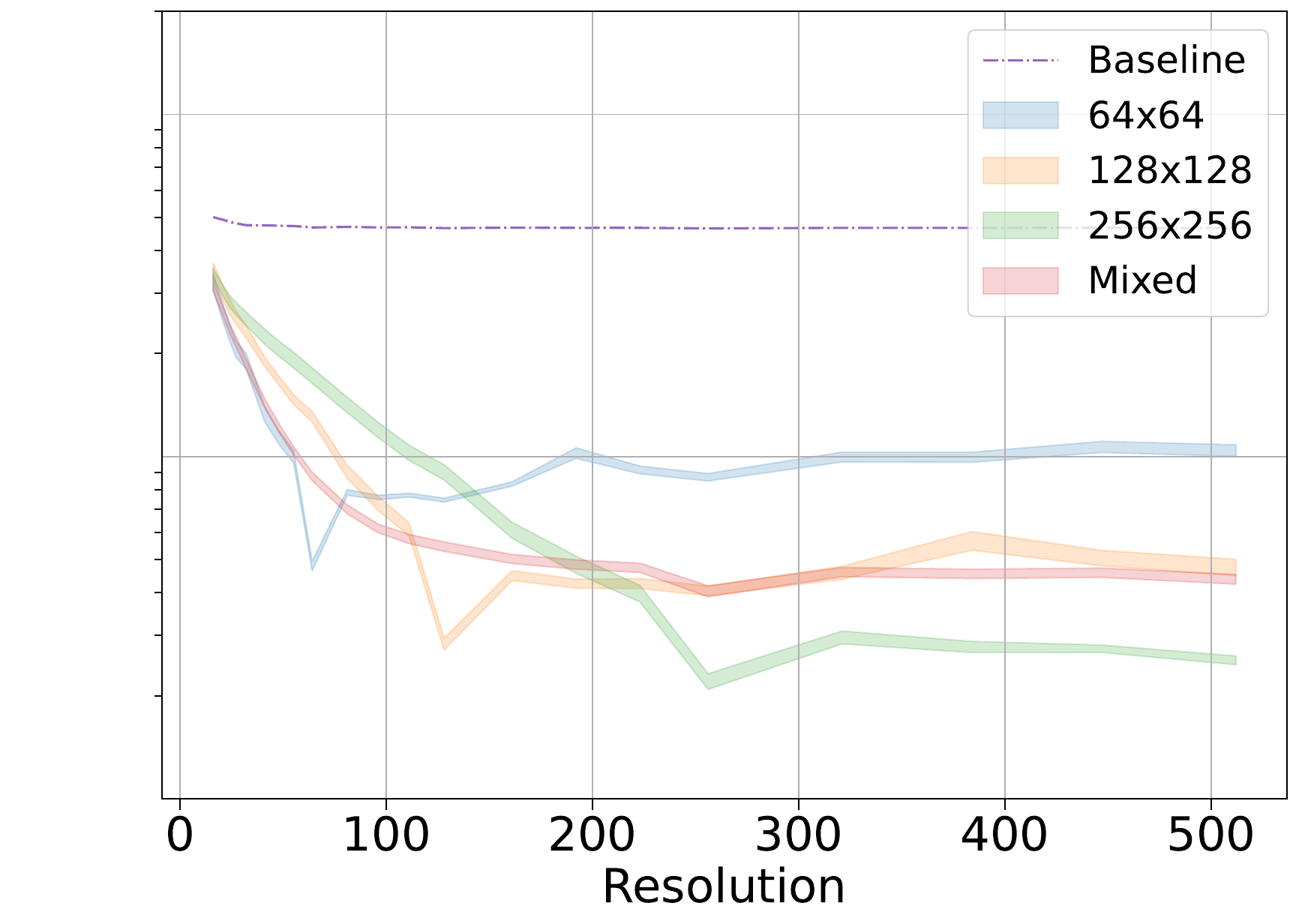}
        \subcaption{Spectral resizing U-Net\\\quad}
    \end{subfigure}
    \begin{subfigure}{0.47\linewidth}
         \includegraphics[width=\linewidth]{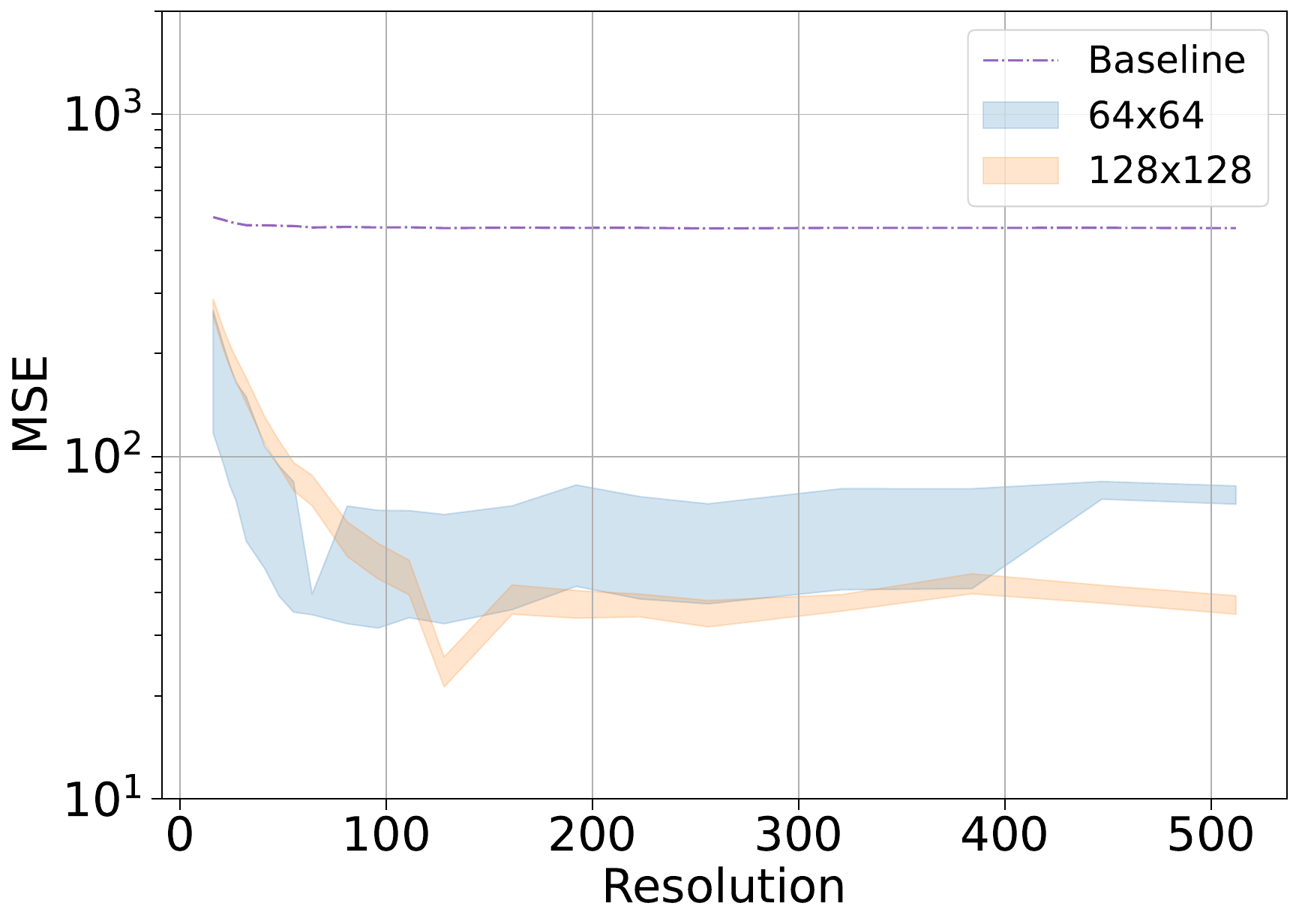}
         \subcaption{CNO}
    \end{subfigure}
    \begin{subfigure}{0.47\linewidth}
         \includegraphics[width=\linewidth]{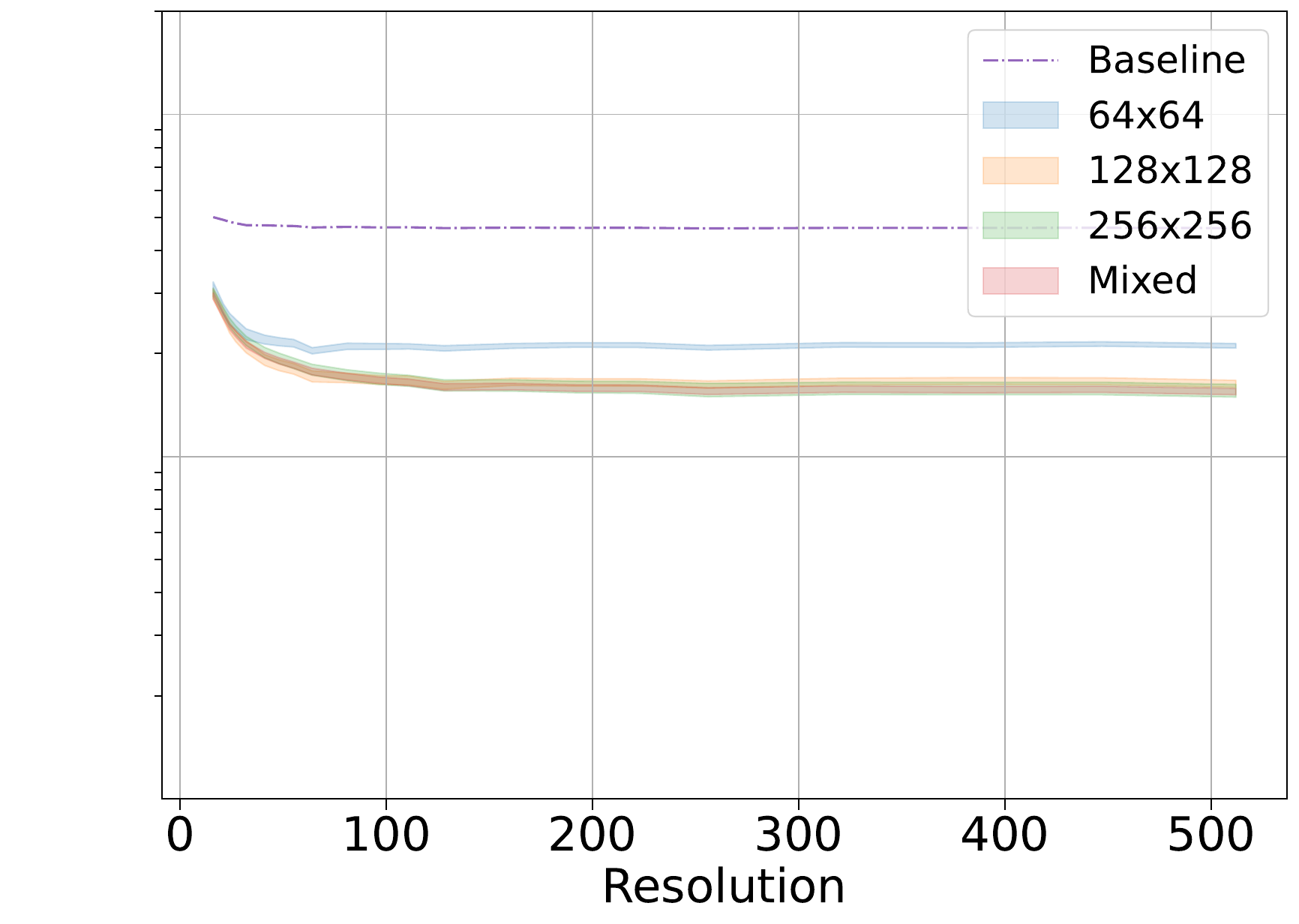}
         \subcaption{U-NO}
    \end{subfigure}
    \caption{Ranges in performance of different architectures on different resolutions resized to their corresponding training resolution. ``Baseline'' indicates the MSE of the initial reconstruction.}
    \label{aa_fig:results2dVarsR}
\end{figure}
\begin{figure}
    \centering
    \includegraphics[width=0.5\linewidth]{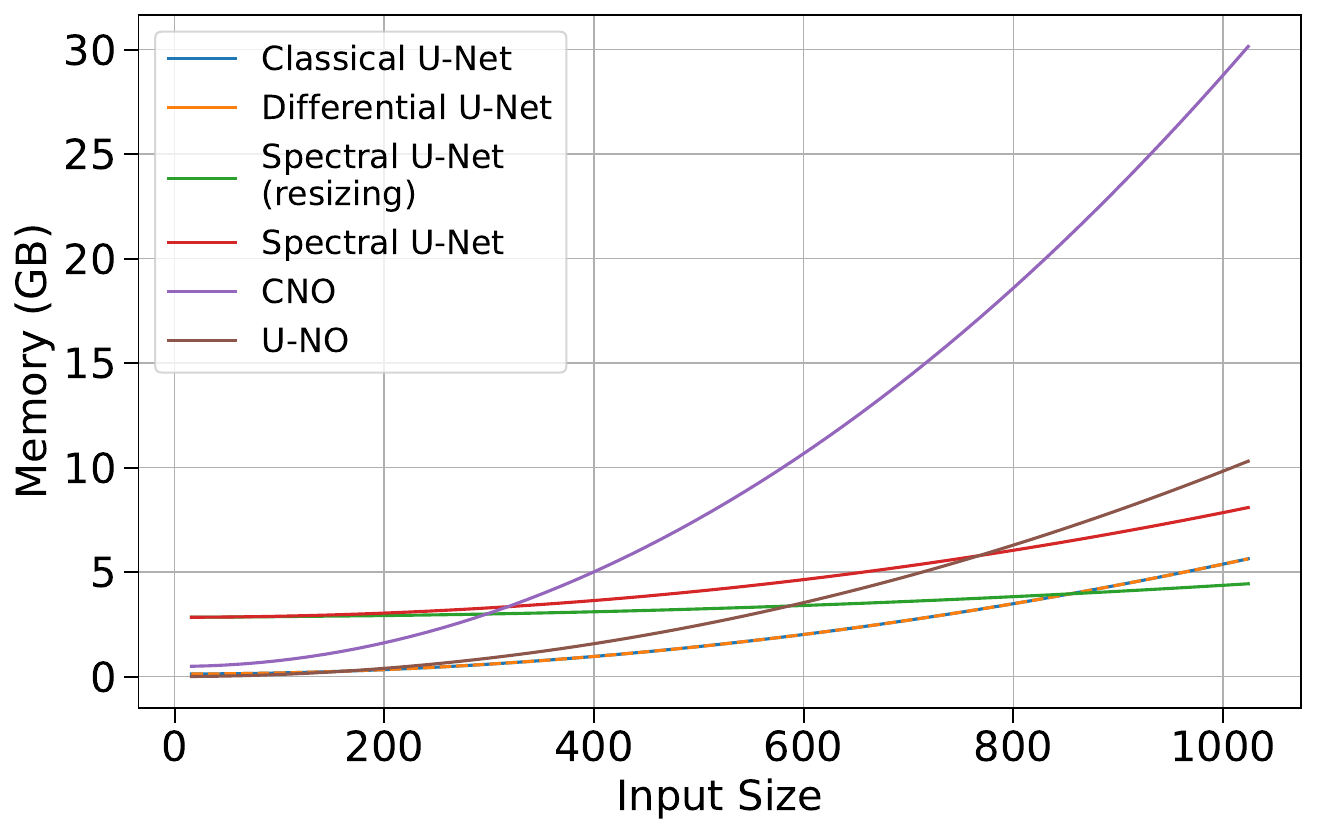}
    \caption{Comparison of the memory requirements for different models on different resolutions.}
    \label{aa_fig:memReq}
\end{figure}
\begin{table}
    \centering
    \begin{tabular}{c|c|c|c|c|c|c}
         & \makecell{Classical\\U-Net} & \makecell{Differential\\U-Net} & \makecell{Spectral resizing\\U-Net} & \makecell{Spectral\\U-Net} & CNO & U-NO  \\ \hline
       \makecell{\# Params\\(Mio.)}& 31.76 & 31.76 & 363.15 & 363.15 & 124.23 & 541.46
    \end{tabular}
    \caption{Comparison of the parameter counts of different models in the configurations used in our experiments.}
    \label{aa_tab:params}
\end{table}

Lastly we would like to highlight once again the vast differences in compute requirements of the different models. Table \ref{aa_tab:params} shows the amount of parameters the models in our experiments utilized. These are independent from training and test resolutions and do not change. Not only does the amount of parameters partially explain the difference in performance between the spectral and more classic versions, but also makes a strong case for the use of common, discretized models, which either generalize well enough by themselves or can be made nearly resolution independent using resized inputs.
Figure \ref{aa_fig:memReq} additionally shows the amount of required memory for a single (gradient-tracking) forward-pass through a certain architecture, making again a strong point for discretized, multiscale architectures.

\section{Conclusion and Outlook}\label{aa_sec:conclusion} The numerical experiments showed that the spectral U-shaped architectures generalize well to unseen data resolutions. To make them practically competitive a more efficient parametrization seems inevitable. Here, we see potential in a more local, spatial parametrization in combination with interpolation.  Another interesting direction is the robustness classical U-Nets showed for a certain range of unseen resolutions. It is open if this observation can be linked to the differential character of (unscaled) convolutional layers from the perspective of finite difference schemes. Towards the approximation of differential operators, our results strongly indicate that it is not enough to simply substitute convolutional layers by differential layers to achieve resolution-invariance. The option to integrate differential layers into an architecture for image processing thus requires further investigations. One perspective could be to combine integral layers and differential layers like it was done in \cite{liuschiaffini2024localno_AA}. A practical shortcoming of the classical U-Net was the large variability of performance we observed in the different training runs. In this regard, further numerical experiments are necessary to see if this issue can be fixed with an optimized training process. 

\section*{Acknowledgement}
This research was supported in part through the Maxwell computational resources operated at Deutsches Elektronen-Synchrotron DESY, Hamburg, Germany. Additional support was received by the OMNI-Cluster compute resources at the University of Siegen. MB and SK acknowledge support from DESY (Hamburg, Germany), a member of the Helmholtz Association HGF. 
AA, MB, SK and MM acknowledge support by the German Research Foundation, projects BU 2327/19-1 and MO 2962/7-1. 
SK would like to thank Tim Roith and Lukas Weigand for the fruitful discussions.

\printbibliography

@InProceedings{ronneberger2015unet_AA,
author="Ronneberger, Olaf
and Fischer, Philipp
and Brox, Thomas",
editor="Navab, Nassir
and Hornegger, Joachim
and Wells, William M.
and Frangi, Alejandro F.",
title="U-Net: Convolutional Networks for Biomedical Image Segmentation",
booktitle="Medical Image Computing and Computer-Assisted Intervention -- MICCAI 2015",
year="2015",
publisher="Springer International Publishing",
address="Cham",
pages="234--241",
isbn="978-3-319-24574-4"
}

@Article{Zhang2022dpirdrunet_AA,
  author    = {Zhang, Kai and Li, Yawei and Zuo, Wangmeng and Zhang, Lei and Van Gool, Luc and Timofte, Radu},
  journal   = {IEEE Transactions on Pattern Analysis and Machine Intelligence},
  title     = {Plug-and-Play Image Restoration With Deep Denoiser Prior},
  year      = {2022},
  issn      = {1939-3539},
  month     = oct,
  number    = {10},
  pages     = {6360--6376},
  volume    = {44},
  doi       = {10.1109/tpami.2021.3088914},
  publisher = {Institute of Electrical and Electronics Engineers (IEEE)},
}

@article{kovachki2023neural_AA,
  title={Neural operator: Learning maps between function spaces with applications to pdes},
  author={Kovachki, Nikola and Li, Zongyi and Liu, Burigede and Azizzadenesheli, Kamyar and Bhattacharya, Kaushik and Stuart, Andrew and Anandkumar, Anima},
  journal={Journal of Machine Learning Research},
  volume={24},
  number={89},
  pages={1--97},
  year={2023}
}

@inproceedings{
li2021fourier_AA,
title={Fourier Neural Operator for Parametric Partial Differential Equations},
author={Zongyi Li and Nikola Borislavov Kovachki and Kamyar Azizzadenesheli and Burigede Liu and Kaushik Bhattacharya and Andrew Stuart and Anima Anandkumar},
booktitle={International Conference on Learning Representations},
year={2021},
}

@article{wen2022ufno_AA,
  title={U-FNO—An enhanced Fourier neural operator-based deep-learning model for multiphase flow},
  author={Wen, Gege and Li, Zongyi and Azizzadenesheli, Kamyar and Anandkumar, Anima and Benson, Sally M},
  journal={Advances in Water Resources},
  volume={163},
  pages={104180},
  year={2022},
  publisher={Elsevier}
}

@article{
rahman2023uno_AA,
title={U-{NO}: U-shaped Neural Operators},
author={Md Ashiqur Rahman and Zachary E Ross and Kamyar Azizzadenesheli},
journal={Transactions on Machine Learning Research},
issn={2835-8856},
year={2023},
}

@Article{Bonneville2025uafno_AA,
  author    = {Bonneville, Christophe and Bieberdorf, Nathan and Hegde, Arun and Asta, Mark and Najm, Habib N. and Capolungo, Laurent and Safta, Cosmin},
  journal   = {npj Computational Materials},
  title     = {Accelerating phase field simulations through a hybrid adaptive Fourier neural operator with U-net backbone},
  year      = {2025},
  issn      = {2057-3960},
  month     = jan,
  number    = {1},
  volume    = {11},
  doi       = {10.1038/s41524-024-01488-z},
  publisher = {Springer Science and Business Media LLC},
}

@inproceedings{liuschiaffini2024localno_AA,
    author = {Liu-Schiaffini, Miguel and Berner, Julius and Bonev, Boris and Kurth, Thorsten and Azizzadenesheli, Kamyar and Anandkumar, Anima},
    title = {Neural operators with localized integral and differential kernels},
    year = {2024},
    publisher = {JMLR.org},
    booktitle = {Proceedings of the 41st International Conference on Machine Learning},
    articleno = {1321},
    numpages = {19},
    location = {Vienna, Austria},
    series = {ICML'24}
    }

@article{berner2025principled_AA,
  title={Principled Approaches for Extending Neural Architectures to Function Spaces for Operator Learning},
  author={Berner, Julius and Liu-Schiaffini, Miguel and Kossaifi, Jean and Duruisseaux, Valentin and Bonev, Boris and Azizzadenesheli, Kamyar and Anandkumar, Anima},
  journal={arXiv:2506.10973},
  year={2025}
}

@article{shelhamer2017fully_AA,
  author={Shelhamer, Evan and Long, Jonathan and Darrell, Trevor},
  journal={IEEE Transactions on Pattern Analysis and Machine Intelligence}, 
  title={Fully Convolutional Networks for Semantic Segmentation}, 
  year={2017},
  volume={39},
  number={4},
  pages={640-651},
  doi={10.1109/TPAMI.2016.2572683}
}

@inproceedings{raonic20cno_AA,
    author = {Raoni\'{c}, Bogdan and Molinaro, Roberto and De Ryck, Tim and Rohner, Tobias and Bartolucci, Francesca and Alaifari, Rima and Mishra, Siddhartha and de B\'{e}zenac, Emmanuel},
    title = {Convolutional neural operators for robust and accurate learning of PDEs},
    year = {2023},
    publisher = {Curran Associates Inc.},
    address = {Red Hook, NY, USA},
    booktitle = {Proceedings of the 37th International Conference on Neural Information Processing Systems},
    articleno = {3376},
    numpages = {14},
    location = {New Orleans, LA, USA},
    series = {NIPS '23}
    }

@article{bartolucci2023representation_AA,
  title={Representation equivalent neural operators: a framework for alias-free operator learning},
  author={Bartolucci, Francesca and de Bezenac, Emmanuel and Raonic, Bogdan and Molinaro, Roberto and Mishra, Siddhartha and Alaifari, Rima},
  journal={Advances in Neural Information Processing Systems},
  volume={36},
  pages={69661--69672},
  year={2023}
}

@Book{Grafakos2014Fourier_AA,
  author    = {Grafakos, Loukas},
  publisher = {Springer New York},
  title     = {Classical Fourier Analysis},
  year      = {2014},
  isbn      = {9781493911943},
  doi       = {10.1007/978-1-4939-1194-3},
  issn      = {2197-5612},
  journal   = {Graduate Texts in Mathematics},
}

@InBook{Kabri2023FNO_AA,
  author    = {Kabri, Samira and Roith, Tim and Tenbrinck, Daniel and Burger, Martin},
  pages     = {236--249},
  publisher = {Springer International Publishing},
  title     = {Resolution-Invariant Image Classification Based on Fourier Neural Operators},
  year      = {2023},
  isbn      = {9783031319754},
  booktitle = {Scale Space and Variational Methods in Computer Vision},
  doi       = {10.1007/978-3-031-31975-4_18},
  issn      = {1611-3349},
}

@Article{Fanaskov2023specneurop_AA,
  author    = {Fanaskov, V. S. and Oseledets, I. V.},
  journal   = {Doklady Mathematics},
  title     = {Spectral Neural Operators},
  year      = {2023},
  issn      = {1531-8362},
  month     = dec,
  number    = {S2},
  pages     = {S226--S232},
  volume    = {108},
  doi       = {10.1134/s1064562423701107},
  publisher = {Pleiades Publishing Ltd},
}

@Book{Troeltzsch2010_AA,
  author    = {Tröltzsch, Fredi},
  publisher = {American Mathematical Society},
  title     = {Optimal Control of Partial Differential Equations},
  year      = {2010},
  isbn      = {9781470411749},
  month     = apr,
  doi       = {10.1090/gsm/112},
  issn      = {1065-7339},
  journal   = {Graduate Studies in Mathematics},
}

@article{ruthotto2020deep_AA,
  title={Deep neural networks motivated by partial differential equations},
  author={Ruthotto, Lars and Haber, Eldad},
  journal={Journal of Mathematical Imaging and Vision},
  volume={62},
  number={3},
  pages={352--364},
  year={2020},
  publisher={Springer}
}

@article{hagemann2023multilevel_AA,
author = {Hagemann, Paul and Mildenberger, Sophie and Ruthotto, Lars and Steidl, Gabriele and Yang, Nicole Tianjiao},
title = {Multilevel Diffusion: Infinite Dimensional Score-Based Diffusion Models for Image Generation},
journal = {SIAM Journal on Mathematics of Data Science},
volume = {7},
number = {3},
pages = {1337-1366},
year = {2025},
doi = {10.1137/23M1614092},
URL = { 
        https://doi.org/10.1137/23M1614092
},
eprint = {
        https://doi.org/10.1137/23M1614092
}
}

@Book{Strikwerda2004_AA,
  author    = {Strikwerda, John C.},
  publisher = {Society for Industrial and Applied Mathematics},
  title     = {Finite Difference Schemes and Partial Differential Equations, Second Edition},
  year      = {2004},
  isbn      = {9780898717938},
  month     = jan,
  doi       = {10.1137/1.9780898717938},
}

@book{Ambrosio2000_AA,
    author = {Ambrosio, Luigi and Fusco, Nicola and Pallara, Diego},
    title = {Functions of Bounded Variation and Free Discontinuity Problems},
    publisher = {Oxford University Press},
    year = {2000},
    month = {03},
    isbn = {9780198502456},
    doi = {10.1093/oso/9780198502456.001.0001},
    url = {https://doi.org/10.1093/oso/9780198502456.001.0001},
}

@Book{Nixon2020_AA,
  author    = {Nixon, Mark S. and Aguado, Alberto S. },
  publisher = {Academic Press},
  title     = {Feature extraction and image processing for computer vision},
  year      = {2020},
  address   = {[Place of publication not identified]},
  edition   = {4th ed.},
  isbn      = {0128149779},
  note      = {Includes bibliographical references and index},
  pagetotal = {1},
  ppn_gvk   = {189276475X},
}

@InProceedings{continuous_conv_AA,
author = {Wang, Shenlong and Suo, Simon and Ma, Wei-Chiu and Pokrovsky, Andrei and Urtasun, Raquel},
title = {Deep Parametric Continuous Convolutional Neural Networks},
booktitle = {Proceedings of the IEEE Conference on Computer Vision and Pattern Recognition (CVPR)},
month = {June},
year = {2018}
}

@Book{Alt2016_AA,
  author    = {Alt, Hans Wilhelm},
  publisher = {Springer London},
  title     = {Linear Functional Analysis},
  year      = {2016},
  isbn      = {9781447172802},
  doi       = {10.1007/978-1-4471-7280-2},
  issn      = {2191-6675},
  journal   = {Universitext},
}

@inproceedings{erisen2024_AA,

  author={Erişen, Serdar},

  booktitle={2024 IEEE International Conference on Computer Vision and Machine Intelligence (CVMI)}, 

  title={SERNet-Former: Segmentation by Efficient-ResNet with Attention-Boosting Gates and Attention-Fusion Networks}, 

  year={2024},

  volume={},

  number={},

  pages={1-6},

  doi={10.1109/CVMI61877.2024.10782648}}

@article{song2019generative_AA,
  title={Generative modeling by estimating gradients of the data distribution},
  author={Song, Yang and Ermon, Stefano},
  journal={Advances in neural information processing systems},
  volume={32},
  year={2019}
}

@article{Feng20Endtoend_AA,
author = {Jinchao Feng and Jianguang Deng and Zhe Li and Zhonghua Sun and Huijing Dou and Kebin Jia},
journal = {Biomed. Opt. Express},
number = {9},
pages = {5321--5340},
publisher = {Optica Publishing Group},
title = {End-to-end Res-Unet based reconstruction algorithm for photoacoustic imaging},
volume = {11},
month = {Sep},
year = {2020},
url = {https://opg.optica.org/boe/abstract.cfm?URI=boe-11-9-5321},
doi = {10.1364/BOE.396598},
}

@article{li2023fourier_AA,
  title={Fourier neural operator with learned deformations for pdes on general geometries},
  author={Li, Zongyi and Huang, Daniel Zhengyu and Liu, Burigede and Anandkumar, Anima},
  journal={Journal of Machine Learning Research},
  volume={24},
  number={388},
  pages={1--26},
  year={2023}
}

@inproceedings{tranfactorized_AA,
  title={Factorized Fourier Neural Operators},
  author={Tran, Alasdair and Mathews, Alexander and Xie, Lexing and Ong, Cheng Soon},
  booktitle={The Eleventh International Conference on Learning Representations},
  year = {2023},
}

@inproceedings{ocampo2023scalable_AA,
  title={Scalable and Equivariant Spherical CNNs by Discrete-Continuous (DISCO) Convolutions},
  author={Ocampo, Jeremy and Price, Matthew Alexander and McEwen, Jason},
  booktitle={The Eleventh International Conference on Learning Representations},
  year = {2023},
}

@Book{Aubert2006_AA,
  author    = {Aubert, Gilles and Kornprobst, Pierre},
  publisher = {Springer New York},
  title     = {Mathematical Problems in Image Processing: Partial Differential Equations and the Calculus of Variations},
  year      = {2006},
  isbn      = {9780387445885},
  doi       = {10.1007/978-0-387-44588-5},
  issn      = {0066-5452},
  journal   = {Applied Mathematical Sciences},
}

@Book{Schoenlieb2015_AA,
  author    = {Schönlieb, Carola-Bibiane},
  publisher = {Cambridge University Press},
  title     = {Partial differential equation methods for image inpainting},
  year      = {2015},
  address   = {New York, NY},
  isbn      = {9781107001008},
  number    = {29},
  series    = {Cambridge monographs on applied and computational mathematics},
  pagetotal = {254},
  ppn_gvk   = {1620434024},
}

@InProceedings{Wei2023superres_AA,
    author    = {Wei, Min and Zhang, Xuesong},
    title     = {Super-Resolution Neural Operator},
    booktitle = {Proceedings of the IEEE/CVF Conference on Computer Vision and Pattern Recognition (CVPR)},
    month     = {June},
    year      = {2023},
    pages     = {18247-18256}
}

@INPROCEEDINGS{Johnny2022fno_AA,
  author={Johnny, Williamson and Brigido, Hatzinakis and Ladeira, Marcelo and Souza, Joao Carlos Felix},
  booktitle={2022 17th Iberian Conference on Information Systems and Technologies (CISTI)}, 
  title={Fourier Neural Operator for Image Classification}, 
  year={2022},
  volume={},
  number={},
  pages={1-6},
  doi={10.23919/CISTI54924.2022.9820128}}

@article{liu2025uffno_AA,
title = {U-shaped factorized Fourier neural operator for solving partial differential equations},
journal = {Computers \& Mathematics with Applications},
volume = {196},
pages = {233-245},
year = {2025},
issn = {0898-1221},
doi = {https://doi.org/10.1016/j.camwa.2025.07.013},
author = {Hui Liu and Peizhi Zhao and Tao Song},
}

@article{chen2018neural_AA,
  title={Neural ordinary differential equations},
  author={Chen, Ricky TQ and Rubanova, Yulia and Bettencourt, Jesse and Duvenaud, David K},
  journal={Advances in neural information processing systems},
  volume={31},
  year={2018}
}

@article{e2017dynamical_AA,
title = {A Proposal on Machine Learning via Dynamical
Systems},
  author={Weinan E},
  journal={Communications in Mathematics and Statistics},
  volume={5},
  year={2017}
}

@article{haber2017stable_AA,
  title={Stable architectures for deep neural networks},
  author={Haber, Eldad and Ruthotto, Lars},
  journal={Inverse problems},
  volume={34},
  number={1},
  pages={014004},
  year={2017},
  publisher={IOP Publishing}
}

@article{thorpe2023deep_AA,
  title={Deep limits of residual neural networks},
  author={Thorpe, Matthew and van Gennip, Yves},
  journal={Research in the Mathematical Sciences},
  volume={10},
  number={1},
  pages={6},
  year={2023},
  publisher={Springer}
}

@inproceedings{venkatakrishnan2013pnp_AA,
  author={Venkatakrishnan, Singanallur V. and Bouman, Charles A. and Wohlberg, Brendt},
  booktitle={2013 IEEE Global Conference on Signal and Information Processing}, 
  title={Plug-and-Play priors for model based reconstruction}, 
  year={2013},
  volume={},
  number={},
  pages={945-948},
  doi={10.1109/GlobalSIP.2013.6737048}}

@inproceedings{hurault2023gradient_AA,
  title={Gradient Step Denoiser for convergent Plug-and-Play},
  author={Hurault, Samuel and Leclaire, Arthur and Papadakis, Nicolas},
  booktitle={International Conference on Learning Representations},
  year = {2023},
}

@inproceedings{feng2023score_AA,
  title={Score-based diffusion models as principled priors for inverse imaging},
  author={Feng, Berthy T and Smith, Jamie and Rubinstein, Michael and Chang, Huiwen and Bouman, Katherine L and Freeman, William T},
  booktitle={Proceedings of the IEEE/CVF International Conference on Computer Vision},
  pages={10520--10531},
  year={2023}
}

@misc{Welker2025Ptycho_AA, 
    author = {Simon Welker and Lorenz Kuger and Tim Roith and Berthy Feng and Martin Burger and Timo Gerkmann and Henry Chapman},
    title = {Position-Blind Ptychography: Viability of image reconstruction via data-driven variational inference (in preparation)},
    year = {2025} }

@book{engl1996regularization_AA,
  title={Regularization of inverse problems},
  author={Engl, Heinz Werner and Hanke, Martin and Neubauer, Andreas},
  volume={375},
  year={1996},
  publisher={Kluwer},
  address ={Dordrecht}
}

@article{auras2024overview,
author = {Auras, Alexander and Gandikota, Kanchana Vaishnavi and Droege, Hannah and Moeller, Michael},
title = {Robustness and exploration of variational and machine learning approaches to inverse problems: An overview},
journal = {GAMM-Mitteilungen},
volume = {47},
number = {4},
pages = {e202470003},
doi = {https://doi.org/10.1002/gamm.202470003},
year = {2024}
}

@article{kovachki2021fnoapprox_AA,
  author  = {Nikola Kovachki and Samuel Lanthaler and Siddhartha Mishra},
  title   = {On Universal Approximation and Error Bounds for Fourier Neural Operators},
  journal = {Journal of Machine Learning Research},
  year    = {2021},
  volume  = {22},
  number  = {290},
  pages   = {1-76},
  url     = {http://jmlr.org/papers/v22/21-0806.html}
}

@article{deHoop2023operatorlearning_AA,
author = {de Hoop, Maarten V. and Kovachki, Nikola B. and Nelsen, Nicholas H. and Stuart, Andrew M.},
title = {Convergence Rates for Learning Linear Operators from Noisy Data},
journal = {SIAM/ASA Journal on Uncertainty Quantification},
volume = {11},
number = {2},
pages = {480-513},
year = {2023},
doi = {10.1137/21M1442942},
}

@misc{Reinhardt2024Operator_AA,
    Author = {Reinhardt, Niklas and Wang, Sven and Zech, Jakob},
    Title = {Statistical Learning Theory for Neural Operators},
    Year = {2024},
    Journal = {arXiv preprint},
    Url = {https://arxiv.org/abs/2412.17582}
}

@inproceedings{koshizuka2024express_AA,
 author = {Koshizuka, Takeshi and Fujisawa, Masahiro and Tanaka, Yusuke and Sato, Issei},
 booktitle = {Advances in Neural Information Processing Systems},
 editor = {A. Globerson and L. Mackey and D. Belgrave and A. Fan and U. Paquet and J. Tomczak and C. Zhang},
 pages = {11021--11060},
 publisher = {Curran Associates, Inc.},
 title = {Understanding the Expressivity and Trainability of Fourier Neural Operator: A Mean-Field Perspective},
 url = {https://proceedings.neurips.cc/paper_files/paper/2024/file/14da7aea05debb963b3d8d46449d51a0-Paper-Conference.pdf},
 volume = {37},
 year = {2024}
}

@article{rowbottom2025multi_AA,
  title={Multi-level monte carlo training of neural operators},
  author={Rowbottom, James and Fresca, Stefania and Lio, Pietro and Sch{\"o}nlieb, Carola-Bibiane and Boull{\'e}, Nicolas},
  journal={arXiv preprint arXiv:2505.12940},
  year={2025}
}

@Book{Quarteroni2007_AA,
  author    = {Quarteroni, Alfio and Sacco, Riccardo and Saleri, Fausto},
  publisher = {Springer New York},
  title     = {Numerical Mathematics},
  year      = {2007},
  isbn      = {9780387227504},
  doi       = {10.1007/b98885},
  issn      = {2196-9949},
  journal   = {Texts in Applied Mathematics},
}

@inproceedings{Xu24Provably_AA,
  author       = {Xingyu Xu and
                  Yuejie Chi},
  editor       = {Amir Globersons and
                  Lester Mackey and
                  Danielle Belgrave and
                  Angela Fan and
                  Ulrich Paquet and
                  Jakub M. Tomczak and
                  Cheng Zhang},
  title        = {Provably Robust Score-Based Diffusion Posterior Sampling for Plug-and-Play
                  Image Reconstruction},
  booktitle    = {Advances in Neural Information Processing Systems 38: Annual Conference
                  on Neural Information Processing Systems 2024, NeurIPS 2024, Vancouver,
                  BC, Canada, December 10 - 15, 2024},
  year         = {2024},
  url          = {http://papers.nips.cc/paper\_files/paper/2024/hash/3fa2d2b637122007845a2fbb7c21453b-Abstract-Conference.html},
  bibsource    = {dblp computer science bibliography, https://dblp.org}
}

@article{Lim25ScoreBased_AA,
  author  = {Jae Hyun Lim and Nikola B. Kovachki and Ricardo Baptista and Christopher Beckham and Kamyar Azizzadenesheli and Jean Kossaifi and Vikram Voleti and Jiaming Song and Karsten Kreis and Jan Kautz and Christopher Pal and Arash Vahdat and Anima Anandkumar},
  title   = {Score-Based Diffusion Models in Function Space},
  journal = {Journal of Machine Learning Research},
  year    = {2025},
  volume  = {26},
  number  = {158},
  pages   = {1--62},
  url     = {http://jmlr.org/papers/v26/23-1472.html}
}

@incollection{steidl2025flowmatching_AA,
  title={{Flow Matching: Markov kernels, stochastic processes and transport plans}},
  author={Wald, C. and Steidl, G.},
  booktitle={Variational and Information Flows in Machine Learning and Optimal Transport, Oberwolfach Seminars. Vol. 56},
editors = {Wuchen Li and
Bernhard Schmitzer and
Gabriele Steidl and
Francois-Xavier Vialard and
Christian Wald},
publisher = {Birkh\"auser},
  pages={185--254},
  year={2025}
}
\end{document}